\numberwithin{equation}{section}
\setlist[enumerate,1]{label={\rm(\roman*)}, ref={\rm\roman*}}
\newtheorem{theorem}{Theorem}[section]
\newtheorem{lemma}[theorem]{Lemma}
\newtheorem{corollary}[theorem]{Corollary}
\newtheorem{conj}[theorem]{Conjecture}
\newtheorem{proposition}[theorem]{Proposition}
\newtheorem{prop}[theorem]{Proposition}
\newtheorem*{theorem*}{Theorem}
\newtheorem*{lemma*}{Lemma}
\newtheorem*{main-theo}{Main Theorem}
\theoremstyle{definition}
\newtheorem{defn}[theorem]{Definition}
\theoremstyle{remark}
\newtheorem{remark}[theorem]{Remark}
\DeclareFontFamily{T1}{calligra}{}
\DeclareFontShape{T1}{calligra}{m}{n}{<-> s * [1.80] callig15}{}
\DeclareMathAlphabet{\mathcalligra}{T1}{calligra}{m}{n}
\DeclareFontFamily{OT1}{pzc}{}
\DeclareFontShape{OT1}{pzc}{m}{it}{<-> s * [1.30] pzcmi7t}{}
\DeclareMathAlphabet{\mathpzc}{OT1}{pzc}{m}{it}
\DeclareMathAlphabet{\pazocal}{OMS}{zplm}{m}{n}
\tikzset{point/.style = {fill=gray,circle,inner sep=2pt}}
\newcommand\T{\rule{0pt}{2.6ex}} 
\newcommand\B{\rule[-1.2ex]{0pt}{0pt}} 
\newcommand\BStrut[1]{\rule[-#1ex]{0pt}{0pt}} 
\newcommand\TopStrut[1]{\rule{0pt}{#1ex}} 
\newcommand{\rvline}{\hspace*{-\arraycolsep}\vline\hspace*{-\arraycolsep}}
\newcommand\mU{{\boldsymbol{\mu}}}
\newcommand\CC{{\mathbb C}}
\newcommand\PP{{\mathbb P}}
\newcommand\QQ{{\mathbb Q}}
\newcommand\RR{{\mathbb R}}
\newcommand\TT{{\mathbb T}}
\newcommand\ZZ{{\mathbb Z}}
\newcommand\JJJ{{\pazocal J}}
\newcommand\LLL{{\mathcal L}}
\newcommand\OOO{{\mathcal O}}
\newcommand\SSS{{\mathcal S}}
\newcommand\HS{\includegraphics{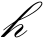}\!}
\newcommand\HSer{{\mathpzc h}}
\newcommand{\Cl}{\operatorname{Cl}\nolimits}
\newcommand{\diag}{\operatorname{diag}\nolimits}
\newcommand{\Ext}{\operatorname{Ext}\nolimits}
\newcommand{\id}{\operatorname{id}\nolimits}
\renewcommand{\Im}{\operatorname{Im}\nolimits}
\newcommand\Proj{\operatorname{Proj}\nolimits}
\newcommand\Sing{\operatorname{Sing}\nolimits}
\newcommand\Sp{\boldsymbol{Sp}}
\newcommand{\tr}{\operatorname{tr}\nolimits}
\newcommand{\ud}{\hspace{0.3ex}\mathrm{d}\hspace{-0.1ex}}
\renewcommand\mod{{\,\mathrm{mod}\,}}
\newcommand\dual{{}^{\scriptscriptstyle\vee}}
\renewcommand\tilde[1]{\widetilde{#1}}
\renewcommand\bar[1]{\overline{#1}}
\def\trans#1{{#1}^{{\sf T}}}
\newcommand\thch[2]{\raisebox{2pt}{$\genfrac{[}{]}{0pt}{2}{#1}{#2}$}}
\newcommand\pd{\partial}
\newcommand\lra{\longrightarrow}
\newlength{\rrrr}
\newcommand{\intoo}[1]{\:
\xymatrix@1{\ar@{^(->}[r]^{#1}&}\:}
\newcommand{\ootni}[1]{\:
\xymatrix@1{&\ar@{_(->}[l]_(.3){#1}}\:}
\newcommand{\loeqs}[1]{\:
\xymatrix@1{\ar@{=}[rr]^{\ #1\ }&&}\:}
\newcommand{\verteq}{\rotatebox{90}{$\,=$}}
\def\geq{\geqslant}
\def\leq{\leqslant}
\newcommand{\dss}{\displaystyle}
\newcommand{\supth}[1]{\ensuremath{#1^{\mathrm{th}}}}
\title{Action of the automorphism group on the Jacobian of Klein's quartic curve II: Invariant theta functions}
\author{Dimitri Markushevich} 
\address{Univ. Lille, CNRS, UMR 8524 -- Laboratoire Paul Painlev\'e, 59000 Lille, France}
\email{dimitri.markouchevitch@univ-lille.fr}
\author{Anne Moreau}
\address{Universit\'{e} Paris-Saclay, CNRS, Laboratoire de Math\'{e}matiques d'Orsay, Rue Michel Magat, B\^{a}t.~307, 91405 Orsay, France}
\email{anne.moreau@universite-paris-saclay.fr}
\begin{document}



\maketitle

\begin{prelims}

\DisplayAbstractInEnglish

\bigskip

\DisplayKeyWords

\medskip

\DisplayMSCclass







\end{prelims}


\newpage

\setcounter{tocdepth}{1}

\tableofcontents


\section{Introduction}
A general conjecture of Bernstein and Schwarzman~\cite{Be-Sch3} claims that the quotient of $\CC^n$ by the action of an irreducible complex crystallographic group generated by reflections (complex crystallographic reflection group for short) is a weighted projective space.  The conjecture is proved for almost all complex crystallographic reflection groups for $n=2$, as well as for irreducible complex crystallographic reflection groups of Coxeter type of any rank $n \geq 2$.  A complex crystallographic reflection group is said to be {\em of Coxeter type} if it is obtained by complexification of a real crystallographic group or, in other words, if the group of its linear parts is conjugate to a finite subgroup of the orthogonal group $\mathbf O(n)$. See~\cite{G336-I} for more historical comments and further references.

Since the 1980s, the conjecture remained widely open for any irreducible complex crystallographic reflection group of rank $n\geq 3$ which is genuinely complex, that is, not of Coxeter type.  In the present paper we prove the conjecture for the rank $3$ complex crystallographic reflection group $\Gamma$ of type $[K_{24}]$ in the classification of Popov~\cite{Po}.  We show that the quotient $X=\CC^3/\Gamma$ is isomorphic to a weighted projective space.\footnote{The proof of the Bernstein--Schwarzman conjecture for $[K_{24}]$ appeared in the second version of the present article, posted on arXiv in November 2022, and at that moment it was the only non-Coxeter complex crystallographic reflection group of rank greater than~$2$ for which the conjecture was established. In March 2023, Eric Rains posted the preprint~\cite{Rains}, in which he proves the conjecture in full generality.}  More explicitly, the main result of the article is the following.

\begin{main-theo}[Theorem~\ref{main3}]
The quotient variety $\JJJ/G$, where $\JJJ=\JJJ(C)$  is the $3$-dimensional Jacobian  of the plane Klein quartic curve $C$ and $G$ is the full automorphism group of order $336$, is isomorphic to the weighted projective space $\PP(1,2,4,7)$.
\end{main-theo}

The  group $[K_{24}]$ is in several regards the most intriguing one among the rank $3$ complex crystallographic reflection groups.
Firstly, it is the only one whose projectivized group of linear parts is simple, namely, is equal to Klein's group $H$ of order $168$. The groups of linear parts in all the other cases are solvable. 
Secondly, the quotient $\CC^3/\Gamma$ 
is isomorphic to the quotient of the Jacobian $\JJJ(C)$ of  
Klein's quartic curve 
$$C:=\{x^3y+y^3z+z^3x=0\}\subset \PP^2$$ by the full group 
$$G=\{\pm1\}\times H$$ of its automorphisms as a principally polarized abelian variety.  As follows from Hurwitz' bound, $C$ has a maximal number of automorphisms for a curve of genus $3$. Algebraic varieties acted on by Klein's group, not only curves, are a recurrent subject of interest in algebraic geometry. Thirdly, $[K_{24}]$ encloses a very rich number-theoretic content, for Klein's curve $C$ is nothing but the modular curve $X(7)$, or else it can be viewed as a Shimura curve, while its Jacobian is isomorphic, as an abstract abelian variety, to the cube of the elliptic modular curve $X_0(49)$.  The representation of the translation lattice of $\Gamma$ that we use was given in~\cite{Mazur}.

An important ingredient of our proof for $[K_{24}]$ is the computation of the Hilbert function of the algebra of $\Gamma$-invariant theta functions.
The proofs of the conjecture in the Coxeter case obtained in the 1980s passed through showing that the invariant algebra is free. As the quotient variety is the spectrum of the invariant algebra, the freeness of the latter implies that the quotient is a weighted projective space.  But this idea does not work for genuinely complex crystallographic reflection groups since the invariant algebra is not free anymore. In the case of $[K_{24}]$, we succeed to understand the structure of relations between generators of this algebra. It turns out that the ideal of relations is principal and defines a hypersurface of degree $8$ in the $4$-dimensional weighted projective space $\PP(1,1,2,4,7)$.

On the other hand, in~\cite{G336-I}, we determined the singularities of $X$: they are images of the orbits whose stabilizers are not generated by reflections.  We observed that the singularities of $X$ are analytically equivalent to those of $\PP(1,2,4,7)$, which prompted us to conjecture that $X$ is isomorphic to this weighted projective space.  The latter also embeds in $\PP(1,1,2,4,7)$ as an octic hypersurface! It can be given by the equation $y_0y_4=y_3^2.$ The last step of our proof is to show that all degree $8$
hypersurfaces in $\PP(1,1,2,4,7)$ whose singularities are those of $\PP(1,2,4,7)$ are equivalent to $y_0y_4=y_3^2$ by automorphisms of $\PP(1,1,2,4,7)$.

As a byproduct, we see that $\PP(1,2,4,7)$ possesses a non-trivial deformation, obtained by varying the coefficients of the octic in the $4$-dimensional weighted projective space. This deformation turns out to be versal and is a partial smoothing, so that the general member of the deformation family is a 2-Gorenstein Fano 3-fold with Picard group $\ZZ$ and only rigid isolated singularities. Deformations of 1-Gorenstein weighted projective spaces in dimension~$3$ have been studied in~\cite{DS}.

Another noteworthy feature of our quotient $X$ is the existence of a natural double cover $\bar Y\to X$, a Calabi--Yau orbifold that can be used as a target space for the superstring compactification; in particular, an interesting question is what its mirror family is. This double cover is obtained as the quotient $\bar Y=\CC^3/\Gamma_0$, where $\Gamma_0$ is the subgroup of index $2$ in $\Gamma$, the unimodular part of $\Gamma$. It can also be realized as an anticanonical hypersurface in $\PP(1,2,4,7,14)$ of dimension $4$. However, the known procedure of constructing mirrors of generic hypersurfaces in a weighted projective space does not apply to this case, for $\bar Y$ is by no means generic; it is a very special member of the anticanonical system on this weighted projective space having non-isolated singularities.  We hope to return to the study of this Calabi--Yau orbifold in the future.

Let us now detail our strategy to prove the main theorem.  In general, a {\em complex crystallographic group} $\Gamma$ of rank $n$ is a group of affine transformations of $\CC^n$ which fits in an exact triple
$$
0\lra L\lra \Gamma\lra \ud\Gamma\lra 1,
$$
where $L\simeq \ZZ^{2n}$ is a lattice of maximal rank $2n$ in $\CC^n$, acting by translations, and $\ud\Gamma$ is a finite subgroup of the unitary group $\mathbf U(n)$.  A complex crystallographic group is a {\em complex crystallographic reflection group} if it is generated by complex affine reflections, where an affine transformation of $\CC^n$ is called a {\em reflection} if it is of finite order and its fixed locus is an affine hyperplane. In our case $\ud\Gamma=G$; there is a unique $G$-invariant rank~$6$ lattice $L$ in $\CC^3$, which turns out to be the period lattice $\Lambda$ of Klein's quartic $C$, and the above extension is necessarily split, so that $\Gamma=\Lambda\rtimes G$.

The quotient $\CC^n/\Gamma$ can be thought of as the quotient of the abelian variety $\CC^n/L=A$ by the induced action of the finite group of linear parts: $\CC^n/\Gamma\simeq A/\ud\Gamma$.  The idea applied in the case of irreducible complex crystallographic reflection groups of Coxeter type in~\cite{Be-Sch3,Loo} is to represent $A$ as the Proj of the graded algebra $S(\LLL)$ of sections of the powers of an ample line bundle $\LLL$ on $A$, linearizable by the action of $\ud\Gamma$, and then $\CC^n/\Gamma$ is the Proj of the $\ud\Gamma$-invariant part:
$$
S(\LLL)=\bigoplus_{k=0}^\infty H^0\left(A,\LLL^{k}\right),\quad A=\Proj S(\LLL),\quad \CC^n/\Gamma\simeq A/\ud\Gamma =\Proj \left(S(\LLL)^{\ud\Gamma}\right).
$$
The sections of the powers of $\LLL$ are given by theta functions for the lattice $L$, and it is proven that the invariant part is a polynomial algebra in $n+1$ free generators.

Mimicking this approach, we introduce the theta functions $\theta_{m,k}\colon \CC^3\to\CC$ of degree $k\geq 0$ for the lattice $\Lambda$ in such a way that $\{\theta_{m,k}\}_{m\in P_k}$ is a basis of the space $H^0(\JJJ,\LLL^k)$ of global sections of the $\supth{k}$ power of an appropriate line bundle $\LLL$ on $\JJJ=\CC^3/\Lambda$ defining a principal polarization, where $m$ runs over a set $P_k$ of representatives of $\frac1k\ZZ^3/\ZZ^3$.  Unlike in the Coxeter case, the thus defined line bundle $\LLL$ is not ${\ud\Gamma}$-invariant, only even powers of $\LLL$ can be $\ud\Gamma$-linearized, and thus we have to work with the second Veronese subalgebra $S(\LLL^2)$ of $S(\LLL)$. We have
$$
X=A/\ud\Gamma\simeq \Proj S\left(\LLL^2\right)^{\ud\Gamma},\quad S\left(\LLL^2\right)^{\ud\Gamma}=\bigoplus_{p=0}^\infty H^0\left(A,\LLL^{2p}\right)^{\ud\Gamma}.
$$
Thus one cannot expect $S(\LLL^2)^{\ud\Gamma}$ to be polynomial; what we prove is the isomorphism between $S(\LLL^2)^{\ud\Gamma}$ and the second Veronese algebra of $\PP(1,2,4,7)$, the latter being non-polynomial.

Now we describe the content of the paper by sections. Section~\ref{sec:prelim} gathers definitions and preliminary results on the complex crystallographic reflection group $\Gamma$, the lattice $\Lambda$ and associated theta functions.

In Section~\ref{sec:theta formula}, we compute the transformation formula for the action of the elements of the modular group on our theta functions (Theorem~\ref{TTF}).

In Section~\ref{sec:Unitary action}, we determine the Hilbert function of the algebra $S(\LLL^2)^G$, proving that it coincides with the Hilbert function of the second Veronese algebra of $\PP(1,2,4,7)$; see Theorem~\ref{main}.  This is the second step of the proof of the isomorphism $X\simeq \PP(1,2,4,7)$; the first one was the study of the singularities of $X$, done in~\cite[Theorem 4.3]{G336-I}.

Next, we prove that $X$ admits an embedding in $\PP(1,1,2,4,7)$ as a degree $8$ hypersurface. This is done in Section~\ref{sec:Main Theorem}.  We first show that there exist four homogeneous elements $\varphi_0,\ldots,\varphi_3$ of $S(\LLL^2)^G$ of degrees $1,1,2,4$ which are algebraically independent. The four invariant theta functions $\varphi_i$ are chosen in an \textit{ad hoc} way in Lemma~\ref{Jac-non-0}, and their algebraic independence is proved by evaluating their Jacobian.  Using the known Hilbert function, we deduce from this that there is a fifth element $\varphi_4$ of $S(\LLL^2)^G$ of degree $7$ such that the five functions $\varphi_i$ generate $S(\LLL^2)^G$, and this makes $X$ into a degree $8$ hypersurface in $\PP(1,1,2,4,7)$ (see Theorem~\ref{main2}).

Section~\ref{sec:degree-8 hypersurfaces} contains the last step of the proof of the main result: in Proposition~\ref{pro:normal forms}, we show that the degree~$8$ hypersurfaces in $\PP(1,1,2,4,7)$ whose singularities are those of $\PP(1,2,4,7)$ form just one orbit under the action of the group of coordinate changes in $\PP(1,1,2,4,7)$. Thus $X$ is equivalent, modulo a coordinate change, to $\PP(1,2,4,7)$ embedded in $\PP(1,1,2,4,7)$ as a degree $8$ hypersurface. This implies the main result of the paper (Theorem~\ref{main3}). In Remark~\ref{smoothing}, we discuss the non-trivial deformations of $\PP(1,2,4,7)$ smoothing out the non-isolated singularity, provided by the family of weighted octics in $\PP(1,1,2,4,7)$.

In Section~\ref{sec:various}, we look at the quotient $\CC^3/\Lambda\rtimes W$, where $W$ is a maximal real reflection subgroup of $G$, of order $48$. The group $\Lambda\rtimes W$ is complex crystallographic but is not generated by reflections. It is plausible that this quotient is a weak weighted projective space (see Proposition~\ref{fake}), and we formulate a conjecture, generalizing this example, which says that if $\Gamma$ and $\Gamma_1$ are commensurable complex crystallographic groups with the same linear parts, such that $\Gamma$ is complex crystallographic reflection, then $\CC^n/\Gamma_1$ is a weak weighted projective space (see Conjecture~\ref{WWPS}). We also provide in Section~\ref{sec:various} some heuristic explanation of our \textit{ad hoc} choice for generators of $S(\LLL^2)^G$ (see Remark~\ref{conj-gen}).

\section*{Acknowledgements}
The authors thank the referee for the useful comments.  They also wish to thank Thomas Dedieu and Xavier Roulleau for interesting discussions.

\section{The group, the lattice and the associated theta functions}
\label{sec:prelim}

Let the complex vector space $V=\CC^3$, with coordinates $(z_1,z_2,z_3)$, be endowed with its standard Hermitian product $\langle-|-\rangle$ and its standard symmetric bilinear form $(-|-)$, and consider the complex root system ${\Phi}$ in it, consisting of the 42 vectors obtained from $(2,0,0)$, $(0,\alpha,\alpha)$ and $(1,1,\bar\alpha)$, where $\alpha=\frac{1+i\sqrt 7}{2}$, by sign changes and permutations of coordinates. For each $\phi\in {\Phi}$, the reflection of order $2$
$$
r_\phi \colon V\longrightarrow V,\quad 
z\longmapsto z-2\frac{\langle\phi|z\rangle}{\langle\phi|\phi\rangle}\phi
$$
sends ${\Phi}$ onto ${\Phi}$. As $r_\phi=r_{-\phi}$, we obtain 21 reflections in this way. We choose
$$
\phi_1=(0,\alpha,-\alpha),\quad \phi_2=(0,0,2),\quad \phi_3=(1,1,\bar\alpha)
$$
as the basic roots and write $r_i=r_{\phi_i}$, $i=1,2,3$:
$$
r_1={\scriptsize \begin{pmatrix}
        1&0&0\\
        0&0&1\\
        0&1&0\end{pmatrix}},\quad
r_2={\scriptsize \begin{pmatrix}
        1&0&0\\
        0&1&0\\
        0&0&{-1}\end{pmatrix}},\quad 
r_3=\tfrac12 {\scriptsize \begin{pmatrix}
        1&{-1}&{-\alpha}\\
        {-1}&1&{-\alpha}\\
        -\bar\alpha&-\bar\alpha&0\end{pmatrix}}.
$$
We define the lattice $\Lambda$ as the root lattice $Q(\Phi)$ of ${\Phi}$, that is, $\Lambda=\sum_{\phi\in\Phi}\ZZ\phi$. We have
$$
\Lambda=\OOO\phi_1+\OOO\phi_2+\OOO\phi_3,\ \ \OOO=\ZZ[\alpha]=\ZZ + \alpha\ZZ.
$$
By~\cite[Section~III.3, pp.~235--236]{Mazur},
$\Lambda$ is the period lattice of Klein's quartic; it can also be represented in the form
$$
\Lambda=\left\{(z_1,z_2,z_3)\in{\OOO}^3 \colon  z_1\equiv z_2 \equiv z_3\ \mathrm{mod}\ \alpha,\ 
z_1+z_2+z_3\equiv 0 \ \mathrm{mod}\ \bar\alpha\right\}.
$$

The group $G$ is the full group of complex-linear automorphisms of $\Lambda$. It contains 21 reflections, all of order~$2$, and is generated by the three basic reflections:
$G=\langle r_1,r_2,r_3\rangle$. According to~\cite[Equation~(10.1)]{Sh-To}, the following relations are defining for $G$:
\begin{equation}\label{rel-ri}
r_1^2=r_2^2=r_3^2=(r_1r_2)^4=(r_2r_3)^4=(r_3r_1)^3=(r_1r_2r_1r_3)^3=1.
\end{equation}
Klein's simple group of order $168$ is the unimodular part of $G$:
$$
H=\{h\in G\colon \det (h)=1\}.
$$
It is generated by the antireflections $\rho_\phi:=-r_\phi$; of course, the antireflections $\rho_i=-r_i$ associated to the three basic roots $\phi_i$ ($i=1,2,3$) suffice to generate $H$. According to~\cite{Po}, there is a unique extension of $\Lambda$ by $G$, the split one, or the semi-direct product $\Gamma=\Lambda\rtimes G$, and it is a complex crystallographic reflection group.

Note that $\Lambda$ contains the sublattice $\alpha Q$, homothetic to the root lattice $Q=Q(C_3)$ of the real root system $C_3$, and thus $G$ contains the Weil group $W=W(C_3)$ of order $48$. The latter consists of all the monomial matrices of size $3$ whose only non-zero elements are $\pm 1$. We have $W=G\cap\mathbf O(3)$, and $G$ is the union of seven cosets $g_7^iW$ ($i=0,\ldots,6$) for some element $g_7$ of order $7$. We can choose $$g_7=\rho_1\rho_2\rho_3=-r_1r_2r_3= \tfrac12\left(\begin{matrix} {-1}&1&\alpha\\ -\bar\alpha&-\bar\alpha&0\\ 1&{-1}&\alpha
\end{matrix}\right).$$ 
This is a handy way to enumerate all the elements of $G$, say, for computer checks of some properties.

Let $M$ denote the weight lattice of $C_3$, $M=Q^*=\ZZ^3+\frac12(1,1,1)$. Then $\Lambda=2\bar\alpha M+\alpha Q$ is generated by the columns of the 6-by-3 period matrix $\Omega=(\omega_1\,|\,\omega_2)$, where $\omega_1,\omega_2$ are square blocks of size 3, the columns of $\omega_2$ being $\alpha$ times the elements of a basis of $C_3$, and we set
\begin{equation}\label{C-matrix}
\omega_2= \alpha C,\quad \omega_1=-2\bar\alpha (\trans{C})^{-1},\quad C=\begin{pmatrix}1&0&0\\
      -1&1&0\\
      0&-1&2\\\end{pmatrix}.
\end{equation}
The normalized period matrix of $\JJJ$ is obtained by multiplying $\Omega$ on the left by $\omega_2^{-1}$:
\begin{equation}\label{Z-period}
\omega_2^{-1}\Omega=(Z\,|\,I),\quad Z=\omega_2^{-1}\omega_1=\tau B,
\end{equation}
where $\tau=-\bar\alpha^{\,2}$ and
\begin{equation}\label{B-matrix}
B=\left(  \trans{C} C\right)^{-1}=\frac14\begin{pmatrix}
     4&4&2\\
     4&8&4\\
     2&4&3\\
     \end{pmatrix}
\end{equation}
is real symmetric and positive definite, so that $Z=\tau B\in\mathfrak H_3$, where $\mathfrak H_r$ denotes the Siegel half-space of complex symmetric matrices of size $r$ with positive-definite imaginary part.

Let $c_1,c_2\in\RR^r$ be two vectors, $k\in\ZZ$, $k\geq 0$, $Z\in \mathfrak H_r$. The classical theta function with period $Z$ and characteristic $c_1Z+c_2$ and of degree $k$ is the complex-valued function
$$
\CC^r\ni v\longmapsto \theta_k\thch{c_1}{c_2}(v,Z)= \sum_{u\in\ZZ^r}
e^{2\pi ik[\trans{(v+c_2)}(u+c_1)+\frac12\trans{(u+c_1)}Z (u+c_1)]}.
$$
When $k=1$, the subscript $k$ is usually omitted.
For $k=1$ and $c_1=c_2=0$, the function $\theta\thch{0}{0}(\bullet,Z)$ represents a section of a uniquely determined line bundle $\LLL$ on the principally polarized abelian variety $A=\CC^r/(Z\ZZ^r+\ZZ^r)$, and then
for any $k,c_1,c_2$, the function $\theta_k\thch{c_1}{c_2}(\bullet,Z)$ represents a section of the line bundle
$T_{c_1Z+c_2}^*(\LLL^k)$, the pullback of the tensor power $\LLL^k$ of $\LLL$ by the translation by the point
$c_1Z+c_2 \mod (Z\ZZ^r+\ZZ^r)$ of $A$. If we choose a set $P_k$ of representatives of $\frac1k\ZZ^r/\ZZ^r$, then the $k^3$ theta functions from $\{\theta_k\thch{c_1}{c_2}(\bullet,Z)\}_{m\in P_k}$ represent a basis of
$H^0(A,T_{c_1Z+c_2}^*(\LLL^k))$. See \textit{e.g.}~\cite{BL}.

\begin{defn}\label{theta-fun}
Let $\Lambda$ be as above, $Z\in \mathfrak H_3$ as in \eqref{Z-period}, $r=3$, $k\in\ZZ$ with $k\geq 1$ and $m\in \frac{1}{k} \ZZ^3$. We define the theta function for $\Lambda$ of degree $k$ with characteristic $m$ by the formula
$$
\theta_{m,k}(z)=\theta_k\thch{m}{0}(\omega_2^{-1}z,Z)\quad \text{for any}\ z\in V.
$$
\end{defn}

There is a unique line bundle $\LLL$ on $\JJJ=V/\Lambda\simeq V/(Z\ZZ^3+\ZZ^3)$ defining a principal polarization such that the functions $\{\theta_{m,k}\}_{m\in P_k}$ form a basis of $H^0(\JJJ,\LLL^k)$, where $P_k$ is a set of representatives of $\frac1k\ZZ^3/\ZZ^3$, for all $k\geq 1$.

We denote by $\Sp(2r,\ZZ)$ the symplectic group of automorphisms of $\ZZ^{2r}$ preserving the skew-symmetric bilinear form given by the matrix $E$:
$$
\Sp(2r,\ZZ)=\left\{ A\in M_{2r}(\ZZ) \colon \trans{A}EA=E\right\}, \quad E=E_r=\begin{pmatrix}0&-I_r\\I_r&0\end{pmatrix},
$$
where $I$ (or $I_r$) denotes the identity matrix (of size $r$).  We represent $E$ and the matrices $\gamma=\begin{psmallmatrix}a&b\\c&d\end{psmallmatrix}\in \Sp(2r,\ZZ)$ by their blocks of size $r$. A crucial ingredient of our computation of the action of $G$ on the theta functions $\theta_{m,k}$ is a transformation formula under modular transformations. The action of the modular transformation $\gamma$ on a classical theta function is defined by
\begin{equation}\label{mod-transf-def}
\left(\theta_k\thch{c_1}{c_2}\right)^\gamma(v,Z)=
\theta_k\thch{c_1}{c_2}\left(\left(\trans{(cZ+d)}\right)^{-1}v,(aZ+b)(cZ+d)^{-1}\right).
\end{equation}
The following is a particular case of Igusa's theorem for a polarization of type $(k,k,\ldots,k)$. 

\begin{theorem}[\textit{cf.} \protect{\cite[Theorem II.5.6]{Ig}}]\label{Igusa}
For every $(v,Z)\in \CC^r\times\mathfrak H_r$, $c_1,c_2\in\RR^g$, $m\in P_k$ and
$\gamma=\begin{psmallmatrix}a&b\\c&d\end{psmallmatrix}\in \Sp(2r,\ZZ)$, the matrix $cZ+d$ is invertible, and we have
$$
\left(\theta_k\thch{m+c'_1}{c'_2}\right)^\gamma(v,Z)=e^{\pi i k \trans{v}{(cZ+d)}^{-1}cv}\det(cZ+d)^{\frac12}\cdot
\sum_{m'\in P_k} u_{m,m'}\theta_k\thch{m'+c_1}{c_2}(v,Z),
$$
where
$$
\begin{bmatrix} c_1'\\c_2'\end{bmatrix} =
\begin{pmatrix}a&b\\c&d\end{pmatrix}\begin{bmatrix}c_1\\c_2\end{bmatrix}+
\frac12\begin{bmatrix} (c\trans{d})_0\\ (a\trans{b})_0\end{bmatrix},
$$
$(u_{m,m'})\in\mathbf U(k^r)$ and $(h)_0$ denotes the column vector of the diagonal elements of $h$ for any square matrix $h$.
\end{theorem}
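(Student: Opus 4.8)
The plan is to obtain this as the specialization to diagonal level $(k,\ldots,k)$ of the general theta transformation formula, the hard analytic content being already contained in~\cite[Theorem~II.5.6]{Ig}; the remaining work is then to match the two normalizations. Unfolding the series in Definition~\ref{theta-fun} shows that the $k^r$ functions $\theta_k\thch{m+c_1}{c_2}(\bullet,Z)$, $m\in P_k$, are exactly the classical theta functions of level $k$ attached to the polarization of type $(k,\ldots,k)$ on the abelian variety $\CC^r/(Z\ZZ^r+\ZZ^r)$, with the finite set $P_k$ of representatives of $\frac1k\ZZ^r/\ZZ^r$ indexing a basis of the corresponding space of global sections. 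In this dictionary the action~\eqref{mod-transf-def} is the restriction of Igusa's $\Sp(2r,\ZZ)$-action, and what has to be extracted from his multiplier is precisely the claimed decomposition into the automorphy factor $e^{\pi i k\trans{v}(cZ+d)^{-1}cv}\det(cZ+d)^{\frac12}$, the affine shift $(c_1,c_2)\mapsto\begin{psmallmatrix}a&b\\c&d\end{psmallmatrix}\begin{psmallmatrix}c_1\\c_2\end{psmallmatrix}+\frac12\begin{psmallmatrix}(c\trans{d})_0\\(a\trans{b})_0\end{psmallmatrix}$ of the characteristic, and a unitary matrix $(u_{m,m'})\in\mathbf U(k^r)$ acting on the $P_k$-index.

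If a self-contained proof were preferred, I would instead check the formula directly on the standard generators of $\Sp(2r,\ZZ)$ and then argue that the set of $\gamma$ for which it holds is closed under multiplication. For $\gamma=\begin{psmallmatrix}I&b\\0&I\end{psmallmatrix}$ with $b=\trans{b}$ integral one has $cZ+d=I$, the automorphy factor is $1$, and substituting into the series and completing the square produces exactly the half-integer shift $\frac12(b)_0$ in the second component of the characteristic, with $(u_{m,m'})$ diagonal. For $\gamma=\begin{psmallmatrix}\trans{u}^{-1}&0\\0&u\end{psmallmatrix}$, $u\in\mathrm{GL}_r(\ZZ)$, one has $cZ+d=u$, $\det u=\pm1$, no shift, and the change of summation variable reindexes $P_k$, giving a permutation-type $(u_{m,m'})$. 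The one substantial generator is the inversion $J=\begin{psmallmatrix}0&-I\\I&0\end{psmallmatrix}$, where $cZ+d=Z$ and the identity is the functional equation of the theta function: it follows from the multivariate Poisson summation formula, equivalently from the Gaussian Fourier transform $\int_{\RR^r}e^{-\pi\trans{x}Ax+2\pi i\trans{x}y}\,dx=(\det A)^{-1/2}e^{-\pi\trans{y}A^{-1}y}$ (valid for $\Re A>0$). This is what yields the factor $\det(Z)^{\frac12}$ with a fixed branch, the Gaussian automorphy factor $e^{\pi i k\trans{v}Z^{-1}v}$ from completing the square, and, after grouping the sum according to residues modulo the finer lattice $\frac1k\ZZ^r$, the discrete Fourier transform matrix over $\frac1k\ZZ^r/\ZZ^r$ as $(u_{m,m'})$, which is unitary.

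The remaining step, which is where the classical subtleties sit, is the passage from generators to all of $\Sp(2r,\ZZ)$: one checks that the automorphy factor $j(\gamma,Z,v):=e^{\pi i k\trans{v}(cZ+d)^{-1}cv}\det(cZ+d)^{\frac12}$ satisfies the cocycle relation over the $\Sp(2r,\ZZ)$-action on $\mathfrak H_r\times\CC^r$, that the characteristic map is consistent with composition — note it is an affine $\Sp(2r,\ZZ)$-action twisted by the vector of diagonal entries of $c\trans{d}$ and $a\trans{b}$, not a linear one — and that the unitary matrices multiply, $u^{(\gamma_1\gamma_2)}=u^{(\gamma_1)}u^{(\gamma_2)}$, modulo the eighth-root ambiguity already absorbed in the choice of branch of $\det(cZ+d)^{\frac12}$. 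I expect this bookkeeping — pinning down the half-integer correction $\frac12((c\trans{d})_0,(a\trans{b})_0)$ and the square-root branch uniformly while carrying the factor $k$ everywhere — to be the only genuine obstacle, and since it is precisely the computation carried out in~\cite[Chapter~II]{Ig}, I would in the end simply invoke that result in diagonal level $(k,\ldots,k)$.
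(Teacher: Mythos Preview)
The paper does not prove this theorem at all: it is stated as a citation of Igusa's book, introduced by the sentence ``The following is a particular case of Igusa's theorem for a polarization of type $(k,k,\ldots,k)$.'' Your proposal correctly identifies this and, after sketching an optional direct argument, ultimately invokes the same reference, so you are in agreement with the paper. It is worth noting that your alternative sketch --- checking the formula on generators $\begin{psmallmatrix}I&b\\0&I\end{psmallmatrix}$, $\begin{psmallmatrix}\trans{u}^{-1}&0\\0&u\end{psmallmatrix}$, $J$ via Poisson summation --- is essentially the method the paper \emph{does} carry out later, in the proof of Theorem~\ref{TTF}, where under the extra hypothesis $\det d=\pm1$ the matrix $\gamma$ is factored as $\sigma_1\sigma_2\sigma_3\sigma_4\sigma_5$ and the Jacobi inversion formula plays the role of your Poisson step; so your sketch anticipates the technique used for the explicit computation rather than for the cited result itself.
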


In the next section, we will represent the automorphisms from $G$ by modular transformations and compute explicitly the matrices $(u_{m,m'})$ for even $k$. As we will see, the presence of the half-integer inhomogeneous term in the transformation formula for the characteristics $c_1,c_2$ implies the non-invariance of $\LLL$ under the action of $G$; however, the even powers of $\LLL$ are $G$-invariant.

\section{Theta transformation formula}
\label{sec:theta formula}

We keep the notation of the previous section. The elements of $G$ are complex 3-by-3 matrices leaving invariant the lattice $\Lambda$. As $\Lambda$ is generated by the columns of the 6-by-3 matrix
$\Omega=(\omega_1\, | \, \omega_2)$,  we can associate
to each $g\in G$ a matrix 
$\gamma=\gamma_g=\left(\begin{smallmatrix}a&b\\c&d\end{smallmatrix}\right)\in \Sp(6,\ZZ)$ in such a way that
$$
(\omega_1\, | \, \omega_2)\begin{pmatrix}\trans{a}&\trans{c}\\ \trans{b}&
\trans{d}\end{pmatrix}
=g (\omega_1\, | \, \omega_2).
$$
Obviously, the map $g\mapsto\gamma_g$ is a group homomorphism.

\begin{lemma} \label{miracle} Let $g\in G$, and let $\gamma=\left(\begin{smallmatrix}a&b\\c&d\end{smallmatrix}\right)$ be as above. Then the following properties hold:
\begin{enumerate}
\item\label{miracle-1}  
$(aZ+b)(cZ+d)^{-1}=Z$;
\item\label{miracle-2}
$\det (cZ+d)=\det g=\pm 1$;
\item\label{miracle-3}
$\det d = \pm 1$.
\end{enumerate}
\end{lemma}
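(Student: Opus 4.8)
The plan is to exploit the defining relation between $g$ and $\gamma_g$, namely $(\omega_1\,|\,\omega_2)\begin{psmallmatrix}\trans a&\trans c\\\trans b&\trans d\end{psmallmatrix}=g(\omega_1\,|\,\omega_2)$, together with the normalization $\omega_2^{-1}\Omega=(Z\,|\,I)$ from \eqref{Z-period}. Reading the two block columns of this identity separately gives
$$
\omega_1\trans a+\omega_2\trans b=g\omega_1,\qquad \omega_1\trans c+\omega_2\trans d=g\omega_2.
$$
Multiplying both equations on the left by $\omega_2^{-1}$ and using $\omega_2^{-1}\omega_1=Z$, $\omega_2^{-1}\omega_2=I$, I get $Z\trans a+\trans b=\omega_2^{-1}g\omega_1$ and $Z\trans c+\trans d=\omega_2^{-1}g\omega_2$. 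Now take transposes: since $Z$ is symmetric, $(Z\trans a+\trans b)^{\transp}=aZ+b$ and $(Z\trans c+\trans d)^{\transp}=cZ+d$, so
$$
aZ+b=\trans{(\omega_1)}\trans{(g)}(\omega_2^{-1})^{\transp},\qquad cZ+d=\trans{(\omega_2)}\trans{(g)}(\omega_2^{-1})^{\transp}.
$$
From the second of these, $cZ+d$ is a product of invertible matrices (note $\det g=\pm1$ since $g\in G$ preserves a lattice, and $\omega_2=\alpha C$ with $C$ invertible), so $cZ+d$ is invertible; moreover $\det(cZ+d)=\det(\trans{\omega_2})\det(\trans g)\det(\omega_2^{-1})^{\transp}=\det(g)=\pm1$. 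This gives part~\eqref{miracle-2}.

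For part~\eqref{miracle-1}, divide the first displayed formula of the previous paragraph by the second:
$$
(aZ+b)(cZ+d)^{-1}=\trans{(\omega_1)}\trans{(g)}(\omega_2^{-1})^{\transp}\bigl(\trans{(\omega_2)}\trans{(g)}(\omega_2^{-1})^{\transp}\bigr)^{-1}=\trans{(\omega_1)}\trans{(g)}(\trans g)^{-1}(\omega_2^{-1})^{-\transp}(\omega_2^{-1})^{\transp}\cdot\ \
$$
— more carefully, $\bigl(\trans{\omega_2}\trans g\,(\omega_2^{-1})^{\transp}\bigr)^{-1}=\trans{\omega_2}\,(\trans g)^{-1}(\omega_2^{-1})^{-\transp}=\trans{\omega_2}\,(\trans g)^{-1}(\trans{\omega_2})^{-1}$, so the product telescopes to $\trans{\omega_1}\trans g(\omega_2^{-1})^{\transp}\trans{\omega_2}(\trans g)^{-1}(\trans{\omega_2})^{-1}$. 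Using $(\omega_2^{-1})^{\transp}\trans{\omega_2}=(\omega_2^{-1}\omega_2)^{\transp}=I$ and then $\trans g(\trans g)^{-1}=I$, this collapses to $\trans{\omega_1}(\trans{\omega_2})^{-1}=(\omega_2^{-1}\omega_1)^{\transp}=\trans Z=Z$. Hence $(aZ+b)(cZ+d)^{-1}=Z$, which is \eqref{miracle-1}.

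For part~\eqref{miracle-3}, the cleanest route is to evaluate at a boundary/limit or use the explicit shape of $Z=\tau B$. Since $cZ+d=\tau cB+d$ is invertible for the given $Z$, and $\det(cZ+d)=\pm1$ is a polynomial identity that we have just established only at this particular $Z$, I instead argue directly: $\gamma\in\Sp(6,\ZZ)$ with $Z=\tau B$ satisfying $(aZ+b)=Z(cZ+d)$. Comparing the coefficient of $\tau^2$ in the identity $aZ+b=Z(cZ+d)$, i.e. in $\tau aB+b=\tau^2 cB\cdot B\cdot(\text{stuff})$ — the honest computation is: $a(\tau B)+b=(\tau B)(c(\tau B)+d)=\tau Bd+\tau^2 BcB$, and since $\tau=-\bar\alpha^2$ is not rational while $B$ has rational entries, separating the ``rational'' and ``$\tau$'' and ``$\tau^2$'' parts over $\QQ[\tau]$ (a degree-2 extension of $\QQ$, as $\tau$ satisfies a quadratic) forces $BcB=0$ hence $c=0$ would be too strong; rather one gets relations that in particular yield $b=\tau(Bd-\tfrac{?}{})$… The robust statement I will actually use: reduce the whole matrix identity modulo the prime $\bar\alpha$ in $\OOO$. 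Under $\OOO\to\OOO/\bar\alpha\OOO$, one has $\omega_1=-2\bar\alpha(\trans C)^{-1}\equiv 0$, so the relation $\omega_1\trans c+\omega_2\trans d\equiv g\omega_2$ becomes $\omega_2\trans d\equiv g\omega_2\pmod{\bar\alpha}$, i.e. $\trans d\equiv\omega_2^{-1}g\omega_2$, whence $\det d\equiv\det g=\pm1\pmod{\bar\alpha}$; since $\det d\in\ZZ$ and $\bar\alpha\bar{\bar\alpha}=2$, this gives $\det d$ odd, and combined with the symplectic constraint $\trans a d-\trans c b=I$ (so $\det d\mid$ stuff) together with $\det(cZ+d)=\pm1$ pins $\det d=\pm1$.

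I expect the main obstacle to be part~\eqref{miracle-3}: parts~\eqref{miracle-1} and~\eqref{miracle-2} are formal consequences of the period-matrix identity and symplecticity, but \eqref{miracle-3} genuinely uses the arithmetic of the specific lattice $\Lambda$ — in particular the factorization $2=\bar\alpha\cdot\overline{\bar\alpha}$ in $\OOO=\ZZ[\alpha]$ and the congruence description $\Lambda=2\bar\alpha M+\alpha Q$. The cleanest implementation is the reduction-mod-$\bar\alpha$ argument sketched above, showing $\det d$ is an odd integer, and then squeezing it to $\pm1$ either via a second congruence (mod $\alpha$) giving $\det a$ odd and the symplectic relation $\det a\det d\equiv 1$ appropriately, or via a direct $2$-adic estimate on $\det(cZ+d)$. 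A fallback, if the congruence bookkeeping gets delicate, is simply to verify \eqref{miracle-3} on the generators $r_1,r_2,r_3$ of $G$ (whose matrices $\gamma_{r_i}$ are explicit) and invoke multiplicativity of $g\mapsto\gamma_g$ together with the fact that $\det d\pmod{2}$ is multiplicative on the image — though one must check $\det d$ itself, not just its parity, is multiplicative, which it is not in general, so this fallback really does need the parity-plus-$\det(cZ+d)=\pm1$ combination. I would present the mod-$\bar\alpha$ computation as the main argument.
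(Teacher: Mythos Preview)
Your arguments for parts~\eqref{miracle-1} and~\eqref{miracle-2} are correct and follow the same idea as the paper: both amount to the observation that $Z\trans c+\trans d=\omega_2^{-1}g\omega_2$ is the matrix of $g$ in the basis given by the columns of $\omega_2$, whence $\det(cZ+d)=\det g$, and similarly $(aZ+b)(cZ+d)^{-1}=\trans Z=Z$. The paper's write-up is slightly tidier (it computes $Z=(g\omega_2)^{-1}(g\omega_1)$ directly rather than passing through transposes), but the content is identical.

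Part~\eqref{miracle-3}, however, has a genuine gap. Your reduction modulo $\bar\alpha$ starts from $g\omega_2\equiv\omega_2\trans d\pmod{\bar\alpha}$ and then asserts $\trans d\equiv\omega_2^{-1}g\omega_2$, hence $\det d\equiv\det g$. But $\omega_2=\alpha C$ with $\alpha\equiv 1\pmod{\bar\alpha}$, so $\omega_2\equiv C$ over $\OOO/\bar\alpha\OOO\cong\FF_2$, and $\det C=2\equiv 0$: the matrix $\omega_2$ is \emph{singular} modulo $\bar\alpha$, so you cannot cancel it, and taking determinants gives only $0\equiv 0$. (The dual attempt mod~$\alpha$ fails symmetrically, since $\omega_1=-2\bar\alpha(\trans C)^{-1}$ also has determinant divisible by $\alpha$.) Even granting ``$\det d$ odd'', your sketch of how to pin it down to $\pm1$ via the symplectic relations or via $\det(cZ+d)=\pm1$ is not an argument: $\det(cZ+d)$ lives in $\CC$, not in $\ZZ$, and $\det d$ is not multiplicative along $g\mapsto\gamma_g$, as you yourself note, so the generator check does not propagate.

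The paper makes no attempt at a conceptual proof here: it simply states that \eqref{miracle-3} ``is verified by a direct computation of the matrices $\gamma_g$ for all elements $g\in G$'', i.e.\ a case check over the $336$ elements. If you want to present something short of that, the only honest reduction is to the seven coset representatives $g_7^i$ ($i=0,\dots,6$) modulo $W$, since for $g\in W$ one has $b=c=0$, $d\in\GL(3,\ZZ)$, and then to check those seven explicitly; but this is still a computation, not the arithmetic argument you sketched.
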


\begin{proof}
\eqref{miracle-1}~ We have
\begin{multline*}
\hspace*{-1em}\trans{ Z} =Z=\omega_2^{-1}\omega_1=(g\omega_2)^{-1}(g\omega_1)=
 (\omega_1\trans{ c}+\omega_2\trans{ d})^{-1}\cdot  (\omega_1\trans{a}
 +\omega_2\trans{b})=\\
  (Z\trans{ c}+\trans{ d})^{-1}(Z\trans{ a}+\trans{ b})= 
  \trans{\left((aZ+b)(cZ+d)^{-1}\right)}.
\end{multline*}

\eqref{miracle-2}~ By the definition of $\gamma$, $Z\trans{ c}+\trans{ d}=\trans{ (cZ+d)}$ is the matrix of $g$ in the basis of $\CC^3$ given by the columns of $\omega_2$, so $\det g
=\det \trans{ (cZ+d)}$.

\eqref{miracle-3}~ This is verified by a direct computation of the matrices $\gamma_g$  for all elements $g\in G$.
\end{proof}

\begin{corollary}\label{TF-long-ch}
For $z\in V$, we have
$$
g\cdot\theta_{0,1}(z)=\left(\theta\thch{0}{0}\right)^{\gamma_g}(v,Z)=
\chi_{g}\theta\thch{\nu'}{\nu^{\prime\prime}}(v,Z),
$$
where 
$$v=\omega_2^{-1}z, \quad
\chi_{g}=e^{\pi i \trans{v}(cZ+d)^{-1}cv}(\det g)^{1/2},
\quad
\begin{bmatrix}{\nu'}\\{\nu^{\prime\prime}}\end{bmatrix}
= -\frac12\begin{pmatrix}\trans d&-\trans b\\-\trans c&\trans a\end{pmatrix}
\begin{bmatrix}{({ c}\trans d)_0}\\{({ a}\trans b)_0}\end{bmatrix}.
$$
\end{corollary}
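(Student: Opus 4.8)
The plan is to read off the first equality from the definitions and to obtain the second one by specializing Igusa's transformation formula (Theorem~\ref{Igusa}) to degree $k=1$.

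For the first equality, recall from Definition~\ref{theta-fun} that $\theta_{0,1}(z)=\theta\thch{0}{0}(\omega_2^{-1}z,Z)$, so that $g\cdot\theta_{0,1}(z)=\theta_{0,1}(g^{-1}z)=\theta\thch{0}{0}(\omega_2^{-1}g^{-1}z,Z)$ (the group acting on functions by $g\cdot f=f\circ g^{-1}$). The relation defining $\gamma_g$, together with $\omega_1=\omega_2 Z$ from~\eqref{Z-period}, gives $g\omega_2=\omega_2\,\trans{(cZ+d)}$, hence $(\trans{(cZ+d)})^{-1}v=\omega_2^{-1}g^{-1}z$ with $v=\omega_2^{-1}z$. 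Feeding this, together with Lemma~\ref{miracle}\eqref{miracle-1} (which says $(aZ+b)(cZ+d)^{-1}=Z$), into the definition~\eqref{mod-transf-def} of the modular action yields $\bigl(\theta\thch{0}{0}\bigr)^{\gamma_g}(v,Z)=\theta\thch{0}{0}\bigl((\trans{(cZ+d)})^{-1}v,Z\bigr)=\theta\thch{0}{0}(\omega_2^{-1}g^{-1}z,Z)$, which is exactly $g\cdot\theta_{0,1}(z)$.

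For the substantive second equality I would invoke Theorem~\ref{Igusa} with $r=3$, $k=1$ (so that $P_1=\{0\}$ and $m=0$), at the point $v=\omega_2^{-1}z$, choosing the auxiliary characteristics $c_1,c_2\in\RR^3$ so that the shifted characteristic $(c_1',c_2')$ appearing on the left-hand side of Igusa's formula vanishes, i.e.\ $\begin{psmallmatrix}a&b\\c&d\end{psmallmatrix}\begin{bmatrix}c_1\\c_2\end{bmatrix}=-\tfrac12\begin{bmatrix}(c\trans d)_0\\(a\trans b)_0\end{bmatrix}$. Since a symplectic matrix $\gamma=\begin{psmallmatrix}a&b\\c&d\end{psmallmatrix}\in\Sp(6,\ZZ)$ has inverse $\begin{psmallmatrix}\trans d&-\trans b\\-\trans c&\trans a\end{psmallmatrix}$ (immediate from $\trans\gamma E\gamma=E$), solving for $(c_1,c_2)$ lands precisely on the pair $(\nu',\nu'')$ of the statement. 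With this choice, and because the sum over $P_1$ reduces to the single term $m'=0$, Igusa's formula becomes
\[
\bigl(\theta\thch{0}{0}\bigr)^{\gamma_g}(v,Z)=e^{\pi i\,\trans v(cZ+d)^{-1}cv}\,\det(cZ+d)^{1/2}\,u_{0,0}\,\theta\thch{\nu'}{\nu''}(v,Z),
\]
where $u_{0,0}$ is the unique entry of the unitary $1\times1$ matrix $(u_{m,m'})$. Finally Lemma~\ref{miracle}\eqref{miracle-2} gives $\det(cZ+d)=\det g=\pm1$, so the constant prefactor $\det(cZ+d)^{1/2}u_{0,0}$ has modulus $1$; identifying it with $(\det g)^{1/2}$ produces $\chi_g=e^{\pi i\,\trans v(cZ+d)^{-1}cv}(\det g)^{1/2}$ and hence the claimed formula.

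The calculation is short, and the only genuinely delicate point is pinning down this constant prefactor: one must check that the unitary scalar $u_{0,0}$ coming out of Igusa's theorem, together with the chosen branch of $\det(cZ+d)^{1/2}$, combine to exactly the multiplier $(\det g)^{1/2}$ recorded in $\chi_g$ — equivalently, that the eighth-root-of-unity theta multiplier is trivial for the normalization adopted here. As $G$ is finite and all the matrices $\gamma_g$ have been written down explicitly, this can be settled by direct verification. The remaining ingredients — the explicit form of $\gamma^{-1}$ for a symplectic matrix and the consequent identity for $(\nu',\nu'')$ — are entirely routine.
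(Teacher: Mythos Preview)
Your argument is exactly the paper's: specialize Igusa's theorem to $k=1$, use Lemma~\ref{miracle} to replace $(aZ+b)(cZ+d)^{-1}$ by $Z$ and $\det(cZ+d)$ by $\det g$, and invoke the symplectic inversion formula $\gamma^{-1}=\begin{psmallmatrix}\trans d&-\trans b\\-\trans c&\trans a\end{psmallmatrix}$ to solve for $(\nu',\nu'')$. Your caveat about the residual unitary scalar $u_{0,0}$ is well placed---the paper's one-line proof does not isolate this eighth-root-of-unity ambiguity either, and for the only application (Corollary~\ref{non-inv}, which depends solely on the characteristic $(\nu',\nu'')$) the precise value of the constant in $\chi_g$ is immaterial.
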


\begin{proof}
This immediately follows from Lemma~\ref{miracle}, Igusa's theorem with $k=1$ and the inversion formula
$$\begin{pmatrix}{ a}&{ b}\\ { c}&{ d}\end{pmatrix}^{-1}
= \begin{pmatrix}\trans d&-\trans b\\-\trans c&\trans a\end{pmatrix}$$
for matrices $\begin{psmallmatrix}a&b\\c&d\end{psmallmatrix}\in  \Sp(2r,\ZZ)$.
\end{proof}

We see that the theta function $\theta_{0,1}$, representing a section of $\LLL$, acquires a half-integer characteristic upon the action by an element $g\in G$ whenever the diagonal elements of the integer matrices $c\trans{ d}$ and $a\trans{ b}$ are not all even. In this case, $g^*\LLL$ is not $\LLL$ but the translation of $\LLL$ by a point of order $2$.

\begin{corollary}\label{non-inv}\leavevmode
\begin{enumerate}
\item For $g\in G$, we have the equivalence $g^*\LLL\simeq \LLL\Leftrightarrow g\in W$, where $W=W(C_3)=G\cap \mathbf O(3)$ is the subgroup of real matrices in $G$.
\item $g^*\LLL^k\simeq \LLL^k$ for all $g\in G \Leftrightarrow k$ is even.
\end{enumerate}
\end{corollary}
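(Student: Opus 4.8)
The plan is to rephrase both equivalences in terms of the half-integer characteristic $(\nu',\nu'')$ of Corollary~\ref{TF-long-ch}. That corollary exhibits $g\cdot\theta_{0,1}$, a section of $g^*\LLL$, as the nowhere-vanishing factor $\chi_g$ times a section of the translate $T^*_{\nu'Z+\nu''}(\LLL)$, so that $g^*\LLL\simeq T^*_{\nu'Z+\nu''}(\LLL)$ on $\JJJ$ and hence $g^*\LLL^k\simeq T^*_{\nu'Z+\nu''}(\LLL^k)$ for every $k$. Since $\LLL$ is a principal polarization, the standard fact $T^*_x(\LLL^k)\simeq\LLL^k\Leftrightarrow x\in\JJJ[k]$ (see~\cite{BL}) reduces everything to the torsion order of the point $\nu'Z+\nu''$ of $\JJJ$. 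From the formula for $(\nu',\nu'')$ one reads off $\nu',\nu''\in\tfrac12\ZZ^3$, so $2(\nu'Z+\nu'')\in Z\ZZ^3+\ZZ^3$ and $\nu'Z+\nu''$ is always a $2$-torsion point; moreover, as $\Im Z$ is invertible, this point is zero exactly when $\nu',\nu''\in\ZZ^3$, which --- the inverse matrix $\begin{psmallmatrix}\trans d&-\trans b\\-\trans c&\trans a\end{psmallmatrix}$ lying in $\mathrm{GL}_6(\ZZ)$ --- happens exactly when $(c\trans d)_0$ and $(a\trans b)_0$ are both even. Thus $g^*\LLL\simeq\LLL$ iff $\nu'=\nu''=0$, while $g^*\LLL^k\simeq\LLL^k$ iff $k$ is even or $\nu'=\nu''=0$.

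For~(i) I would study the subgroup $K=\{g\in G\colon g^*\LLL\simeq\LLL\}$ of $G$. First, $W\subseteq K$: for $g\in W=G\cap\mathbf O(3)$ the matrix $\gamma_g$ is block-diagonal. Indeed, the columns of $\omega_2=\alpha C$ span the real $3$-subspace $\alpha\RR^3\subset\CC^3$ and those of $\omega_1=-2\bar\alpha(\trans C)^{-1}$ span $\bar\alpha\RR^3$; these two subspaces intersect only in $0$ because $\bar\alpha/\alpha\notin\RR$, and since $g$ is real orthogonal it preserves each of them, so comparing the $\alpha\RR^3$- and $\bar\alpha\RR^3$-components in $g(\omega_1\,|\,\omega_2)=(\omega_1\trans a+\omega_2\trans b\,|\,\omega_1\trans c+\omega_2\trans d)$ forces $b=c=0$; then $(c\trans d)_0=(a\trans b)_0=0$, so $\nu'=\nu''=0$ and $g\in K$. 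Hence $|K|$ is a multiple of $|W|=48$ dividing $|G|=336$, and since $336/48=7$ is prime, $K=W$ or $K=G$. To rule out the latter it suffices to produce a single $g\notin W$ with $g^*\LLL\not\simeq\LLL$; for $g=g_7=\rho_1\rho_2\rho_3$, which lies in the non-trivial coset $g_7W$, a direct computation of the associated matrix $\gamma_{g_7}\in\Sp(6,\ZZ)$ shows that one of $(c\trans d)_0$, $(a\trans b)_0$ has an odd entry, so $\nu'Z+\nu''\neq0$. Therefore $K=W$, which is~(i).

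Statement~(ii) then follows directly from the first paragraph. If $k$ is even, then $k\nu',k\nu''\in\ZZ^3$ for every $g\in G$, so $\nu'Z+\nu''\in\JJJ[k]$ and $g^*\LLL^k\simeq\LLL^k$ for all $g$. If $k$ is odd, take $g=g_7$ once more: since $\nu'Z+\nu''$ is a $2$-torsion point and $k-1$ is even, $k(\nu'Z+\nu'')\equiv\nu'Z+\nu''\pmod{Z\ZZ^3+\ZZ^3}$, which is non-zero by~(i); hence $g_7^*\LLL^k\not\simeq\LLL^k$ and $\LLL^k$ is not $G$-invariant.

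The only genuinely computational ingredient is the verification that $\gamma_{g_7}$ has an odd entry in the diagonal of $c\trans d$ or of $a\trans b$ --- the same kind of explicit matrix calculation that underlies Lemma~\ref{miracle}\eqref{miracle-3}. Conceptually, the one point that could be an obstacle is pinning down $K$ exactly, and this is made cheap by the primality of $[G:W]=7$: it upgrades the easily checked inclusion $W\subseteq K$, together with a single non-invariant element, directly to the equality $K=W$.
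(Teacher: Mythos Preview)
Your proof is correct and follows a different, more economical route than the paper's. The paper simply states that ``by a direct computation, we verify that for $g\in G$, the diagonal elements of the integer matrices $c\trans d$ and $a\trans b$ are all even if and only if $g\in W$'', implicitly running over all of $G$ (or at least the seven cosets $g_7^iW$). You replace this brute-force check by two conceptual reductions: first, the observation that a real $g$ preserves the transversal real subspaces $\alpha\RR^3$ and $\bar\alpha\RR^3$ spanned by the columns of $\omega_2$ and $\omega_1$, forcing $\gamma_g$ to be block diagonal and hence $W\subseteq K$; second, the primality of $[G:W]=7$, which turns a single counterexample into the full equality $K=W$. The one explicit matrix you still need, $\gamma_{g_7}$, is already displayed later in the paper (proof of Theorem~\ref{char-rho}), where the block $\tilde c=-c\trans d$ visibly has the odd diagonal entry~$1$. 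Your approach buys a cleaner argument requiring only one matrix computation instead of many; the paper's approach has the advantage of being entirely mechanical and not relying on the arithmetic coincidence that $|G|/|W|$ is prime.
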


\begin{proof}
By a direct computation, we verify that for $g\in G$, the diagonal elements of the integer matrices $c\trans{ d}$ and $a\trans{ b}$ are all even if and only if $g\in W$. This implies both assertions.
\end{proof}

Thus the problem of calculating the action of $G$ on $H^0(\JJJ,\LLL^k)$ has sense only for even $k$. We are now going to calculate, in a particular case and only for even $k$, the matrix $(u_{m,m'})$ from Igusa's theorem up to proportionality; we denote the matrix we find by $(\tilde u_{m,m'})$, as it is a multiple of Igusa's matrix $(u_{m,m'})$ which is not necessarily unitary.

For $\left(\begin{smallmatrix}a&b\\c&d\end{smallmatrix}\right)\in \Sp(2r,\ZZ)$ with $\det d=\pm 1$, we define
$$\tilde a=a-bd^{-1}c=(\trans{ d})^{-1}, \quad 
\tilde b=\trans{\tilde b}=b\trans{ \tilde a}, \quad 
\tilde c=\trans{\tilde c}=-c\trans{d}.$$

\begin{theorem}\label{TTF}
Let $k$ be a positive even integer, $P_k$ a set of representatives of $\frac1k\ZZ^r/\ZZ^r$, $\gamma\in \Sp(2r,\ZZ)$ such that $\det d=\pm 1$ and $Z=\tau B$ for a real symmetric positive-definite matrix $B$ of size $r$, where $\tau \in\CC$, $\Im \tau>0$.  Then ${\theta_k\thch{m}{0}}^\gamma= \chi\sum_{m'\in P_k} \tilde u_{m,m'}\theta_k\thch{m'}{0}$, where $\chi=\chi_\gamma(v,Z)$ is a nowhere-vanishing analytic function on $\CC^r\times\mathfrak H_r$, depending on $\gamma$, and
$$
\tilde u_{m,m'}=e^{\pi i k\tilde b[m]}\sum_{\hat m\in P_k}
e^{2\pi i k 
\trans{\left(m-d m'+\frac12\tilde c\hat m\right)}\hat m}.
$$
\end{theorem}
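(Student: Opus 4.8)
The plan is to deduce Theorem~\ref{TTF} from Igusa's theorem (Theorem~\ref{Igusa}) applied with $c_1=c_2=0$, by carefully tracking the half-integer characteristics and diagonalizing the decomposition $\gamma = \left(\begin{smallmatrix}\tilde a & 0\\ 0 & \trans{\tilde a}^{-1}\end{smallmatrix}\right)\left(\begin{smallmatrix}I & \tilde a^{-1}\tilde b\\ 0 & I\end{smallmatrix}\right)\cdots$; more precisely, the key point is that since $\det d = \pm1$, the matrix $\gamma$ factors as a product of a ``translation'' block $\left(\begin{smallmatrix}I & 0\\ c\trans d\cdot(\ldots) & I\end{smallmatrix}\right)$-type upper/lower triangular symplectic matrices and a diagonal one, for which the theta transformation is elementary and classical. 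First I would record the closed formulas for $\tilde a, \tilde b, \tilde c$ given just before the statement and verify that $\tilde b, \tilde c$ are symmetric (this uses $\trans a d - \trans c b = I$ and $\trans a c = \trans c a$, $\trans b d = \trans d b$, the symplectic relations for $\gamma$). Then I would split the action $\gamma = \gamma_1\gamma_2\gamma_3$ where $\gamma_3$ shifts $Z$ by the symmetric integer-ish matrix related to $\tilde b$, $\gamma_1$ is the ``$S$-type'' inversion paired with the $\GL_r$ part $\tilde a$, and compute the effect of each on $\theta_k\thch{m}{0}$ separately.

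The heart of the computation is the following. After reducing to $Z = \tau B$, the translation-type factor contributes the Gauss-type phase $e^{\pi i k\tilde b[m]}$ — where $\tilde b[m] := \trans m\,\tilde b\,m$ — exactly as in the theta quasi-periodicity formula $\theta_k\thch{m}{0}(v, Z+N) = e^{\pi i k\trans m N m}\theta_k\thch{m}{0}(v,Z)$ for integral symmetric $N$, extended to half-integral $N$ via the standard manipulation of shifting the summation index in $\ZZ^r$. The remaining $\GL_r(\ZZ)$-and-inversion factor, governed by $\tilde a = (\trans d)^{-1}$ and $\tilde c = -c\trans d$, produces a finite Gauss sum over $P_k$: substituting $v\mapsto (\trans{(cZ+d)})^{-1}v$ and expanding the quadratic exponent, the lattice sum over $u\in\ZZ^r$ defining $\theta_k\thch{m}{0}$ splits, modulo $k\ZZ^r$, into a sum over $P_k$ of shifted theta functions $\theta_k\thch{m'}{0}$, with the cross-terms assembling precisely into $\sum_{\hat m\in P_k}e^{2\pi i k\trans{(m - dm' + \frac12\tilde c\hat m)}\hat m}$. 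The factor $\chi = \chi_\gamma(v,Z)$ absorbs the exponential quadratic-in-$v$ prefactor $e^{\pi i k\trans v(cZ+d)^{-1}cv}$ and the automorphy factor $\det(cZ+d)^{1/2}$ from Igusa, both of which are nowhere-vanishing analytic functions on $\CC^r\times\mathfrak H_r$; crucially, the evenness of $k$ guarantees that $e^{\pi i k\trans m N m}$ depends only on $m\bmod\ZZ^r$ when $N$ has half-integer entries, so the sum is well-defined on $P_k$ and the half-integer shifts appearing in Igusa's inhomogeneous term $\frac12[(c\trans d)_0; (a\trans b)_0]$ disappear (they shift the characteristic by an element of $\frac12\ZZ^r$, which is killed after the even-$k$ Gauss sum).

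The step I expect to be the main obstacle is the bookkeeping in the finite Gauss sum: correctly identifying which residues $m'\in P_k$ appear and with which phase, i.e.\ showing that the index substitution $u = \hat m + kw$ with $\hat m\in P_k$, $w\in\ZZ^r$, reorganizes the single lattice sum into the double sum $\sum_{m'}\sum_{\hat m}$ with the claimed exponent — and, in parallel, checking that every purely-$v$-dependent and $Z$-dependent-but-$m$-independent term genuinely factors out into $\chi$ rather than contaminating $\tilde u_{m,m'}$. A secondary subtlety is the sign/square-root ambiguity in $\det(cZ+d)^{1/2}$: since we only claim the formula up to the nowhere-vanishing factor $\chi$ (and not that $(\tilde u_{m,m'})$ is unitary — that is Igusa's assertion about the normalized $(u_{m,m'})$), we may fix any holomorphic branch, and the specialization $Z = \tau B$ with $\Im\tau > 0$ makes such a branch available on all of $\mathfrak H_r$ restricted to this line. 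Once these are handled, the formula for $\tilde u_{m,m'}$ follows by direct comparison of the two sides.
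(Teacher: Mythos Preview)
Your sketch points in the right direction—factor $\gamma$ and apply elementary theta transformations—but it diverges from the paper's proof in one structural respect and leaves a genuine gap in another.

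\textbf{Different route.} The paper does \emph{not} invoke Igusa's theorem at all in the proof of Theorem~\ref{TTF}. Igusa's result asserts only the existence of a unitary matrix $(u_{m,m'})$; it gives no closed formula for its entries, so you cannot ``deduce'' the explicit $\tilde u_{m,m'}$ from it. Instead, the paper computes directly from a five-term factorization
\[
\gamma=\underbrace{\begin{pmatrix}1&\tilde b\\0&1\end{pmatrix}}_{\sigma_1}
\underbrace{\begin{pmatrix}0&1\\-1&0\end{pmatrix}}_{\sigma_2}
\underbrace{\begin{pmatrix}1&\tilde c\\0&1\end{pmatrix}}_{\sigma_3}
\underbrace{\begin{pmatrix}0&-1\\1&0\end{pmatrix}}_{\sigma_4}
\underbrace{\begin{pmatrix}\tilde a&0\\0&d\end{pmatrix}}_{\sigma_5},
\]
applying each factor to the theta series by hand. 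The upper-unipotent pieces $\sigma_1,\sigma_3$ contribute the phases $e^{\pi i k\tilde b[m]}$ and $e^{\pi i k\tilde c[m']}$ (here the evenness of $k$ is used so that $e^{\pi i k\tilde b[u]}=1$ for $u\in\ZZ^r$, since $\tilde b$ is integral, not half-integral as you wrote); the block-diagonal $\sigma_5$ permutes characteristics; and the two inversions $\sigma_2,\sigma_4$ are each handled by the Jacobi inversion formula (Poisson summation), using crucially that $Z=\tau B$ with $B$ real. Your three-factor decomposition $\gamma_1\gamma_2\gamma_3$ is essentially the same product with $\sigma_2\sigma_3\sigma_4=\left(\begin{smallmatrix}1&0\\-\tilde c&1\end{smallmatrix}\right)$ collapsed into a single lower-unipotent block—but acting by a lower-unipotent block on a theta series is \emph{not} elementary, and the paper's whole point in writing $\sigma_2\sigma_3\sigma_4$ is to trade that hard step for an easy one conjugated by two inversions.

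\textbf{The gap.} This is precisely where your sketch is incomplete. The formula for $\tilde u_{m,m'}$ contains an \emph{inner} sum over $\hat m\in P_k$ in addition to the outer index $m'$. In the paper's argument, the outer sum over $m'$ is produced by the first Jacobi inversion (Step~2), and the inner sum over $\hat m$ by the second (Step~4). Your description—``the lattice sum over $u\in\ZZ^r$ splits, modulo $k\ZZ^r$, into a sum over $P_k$ of shifted theta functions, with the cross-terms assembling precisely into $\sum_{\hat m}\ldots$''—accounts for only one such splitting and then asserts the second sum appears by fiat. A single reorganization $u=\hat m+kw$ does not by itself produce a \emph{double} finite sum of this shape; you need a second Poisson step (equivalently, a second passage through $Z\mapsto -Z^{-1}$), and that is the step your plan omits. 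Once you insert it, the discussion of Igusa's half-integer characteristic shift $\frac12[(c\trans d)_0;(a\trans b)_0]$ becomes irrelevant: those terms never enter the direct computation.
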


\begin{proof}
We decompose $\gamma$ in a product of elementary transformations as follows:
$$
\gamma=
\raisebox{-1.6ex}{$\substack{\begin{pmatrix}1&\tilde b\\ 0&1\end{pmatrix}\\ \, \verteq\\ \sigma_1}$}
\raisebox{-1.6ex}{$\substack{\begin{pmatrix}0&1\\ -1&0\end{pmatrix}\\ \, \verteq\\ \sigma_2}$}
\raisebox{-1.6ex}{$\substack{\begin{pmatrix}1&\tilde c\\ 0&1\end{pmatrix}\\ \, \verteq\\ \sigma_3}$}
\raisebox{-1.6ex}{$\substack{\begin{pmatrix}0&-1\\ 1&0\end{pmatrix}\\ \, \verteq\\ \sigma_4}$}
\raisebox{-1.6ex}{$\substack{\begin{pmatrix}\tilde a&0\\ 0&d\end{pmatrix}\\ \, \verteq\\ \sigma_5}$}
$$
and apply the factors of this decomposition successively.

{\em Step}~1.~ 
$\left(\theta_k\thch{m}{0}\right)^{\sigma_1}(v,Z)=
\theta_k\thch{m}{0}(v,Z+\tilde b)=$
$$
\sum_{u\in\ZZ^r}e^{2\pi ik(\trans{ v}(u+m)+\frac12\trans{(u+m)}Z(u+m))}\cdot
e^{\pi i k \tilde b[u]}\cdot e^{2\pi i k \trans{ m} \tilde bu}\cdot e^{\pi i k \tilde b[m]} =
e^{\pi i k \tilde b[m]} \theta_k\thch{m}{0}(v,Z),
$$
where we use the notation $A[u]=\trans{ u} A u$ for any symmetric matrix $A$ of size $r$ and any vector
$u\in\CC^r$, and $e^{\pi i k \tilde b[u]}= e^{2\pi i k \trans{ m} \tilde bu}=1$ for any $u\in\ZZ^r$, $k$ even.

{\em Step}~2.~ We have
$$
\left(\theta_k\thch{m}{0}\right)^{\sigma_1\sigma_2}(v,Z)=
e^{\pi i k \tilde b[m]} \theta_k\thch{m}{0}(-Z^{-1}v,-Z^{-1})=
e^{\pi i k \tilde b[m]} \theta_k\thch{-m}{0}(Z^{-1}v,-Z^{-1}),
$$
$$
\theta_k\thch{-m}{0}(Z^{-1}v,-Z^{-1})=
\sum_{u\in\ZZ^r}e^{\frac{2\pi i k}{\tau}
(\trans{ v} B^{-1}(u-m)-\frac12\trans{(u-m)}B^{-1}(u-m))}=
e^{\frac{\pi i k}{\tau}A[v]}
\sum_{u\in\ZZ^r}e^{-\frac{\pi i k}{\tau}A[u-m-v]},
$$
where $A=B^{-1}$ and we used the hypothesis that $Z=\tau B$.
To transform the latter expression, we apply the Jacobi inversion formula (see for example~\cite[Chapter~VI]{Gu}):
$$
\sum_{u\in\ZZ^r}e^{\pi i t  A[x+u]}=\frac1{\sqrt{(-it)^r\det A}}\sum_{u\in\ZZ^r}
e^{-\frac{\pi i}tA^{-1}[u]+2\pi i\trans{ x}u} 
$$
for $x\in \RR^r, \ t\in\CC, \ \Im t >0,\ A\in M_r(\RR),\ \trans{ A}=A,\ A>0$.
We set $t=-k/\tau$ and $x=-m-v$, and we obtain
$$
\left(\theta_k\thch{m}{0}\right)^{\sigma_2}(v,Z)=
\chi_0
\sum_{u\in\ZZ^r}
e^{\pi i k Z[\frac{u}{k}]-2\pi i k\trans{ (m+v)}\frac{u}{k}}
= \chi_0 \sum_{u'\in\frac1k\ZZ^r}e^{\pi i k Z[u']-2\pi i k\trans{ (m+v)}u'},
$$
where $\chi_0(v,Z)=\frac{e^{\pi i k Z^{-1}[v]}}{\sqrt{(-ik)^r\det Z^{-1}}}$.
Now each $u'\in \frac1k\ZZ^r$ has a unique representation $u'=u+m'$ with $u\in\ZZ^r$ and $m'\in P_k$, and we obtain
$$
\left(\theta_k\thch{m}{0}\right)^{\sigma_1\sigma_2}(v,Z)=\chi_0e^{\pi i k \tilde b[m]} 
\sum_{m'\in P_k} e^{2\pi i k \trans{m}m'}\theta_k\thch{m'}{0}(v,Z),
$$
where $\chi_0=\chi_0(v,Z)$ is the nowhere-vanishing analytic function in $v,Z$ defined above.

{\em Step}~3.~ We have
$$
\left(\theta_k\thch{m}{0}\right)^{\sigma_1\sigma_2\sigma_3}(v,Z)=
\chi_1 e^{\pi i k \tilde b[m]} 
\sum_{m'\in P_k} e^{2\pi i k (\trans{ m}m'+\frac12\tilde c[m'])}\theta_k\thch{m'}{0}(v,Z),
$$
where $\chi_1(v,Z)=\chi_0(v,Z+\tilde c)$.

{\em Step}~4.~ The calculation is similar to that in Step 2:
\begin{align*}
\left(\theta_k\thch{m}{0}\right)^{\sigma_1\sigma_2\sigma_3\sigma_4}(v,Z)
&=\chi_1(Z^{-1}v,-Z^{-1})e^{\pi ik\tilde b[m]}
\sum_{m'\in P_k}e^{2\pi ik \trans{ (m+\frac12\tilde c m')}m'}\theta_k\thch{m}{0}(Z^{-1}v,-Z^{-1})\\
&=
\chi_2\sum_{m'\in P_k}\sum_{m^{\prime\prime}\in P_k}\zeta_{m',m^{\prime\prime}}
\theta_k\thch{m^{\prime\prime}}{0}(v,Z),
\end{align*}
where $\zeta_{m',m^{\prime\prime}}=e^{\pi ik\tilde b[m]}e^{2\pi ik \trans{ (m+\frac12\tilde c m'-m^{\prime\prime})}m'}
$ 
and $\chi_2(v,Z)=\chi_1(Z^{-1}v,-Z^{-1})\chi_0(v,Z)$.

{\em Step}~5.~
$\theta_k\thch{m}{0}^\gamma(v,Z)=\chi{\dss \sum_{m',m^{\prime\prime}}}
\zeta_{m',m^{\prime\prime}}
\theta_k\thch{m^{\prime\prime}}{0}^{\left(\begin{smallmatrix}\tilde a&0\\ 0&d\end{smallmatrix}\right)}(v,Z)=
\chi{\dss \sum_{m',m^{\prime\prime}}}
\zeta_{m',m^{\prime\prime}}
\theta_k\thch{\trans{\tilde a} m^{\prime\prime}}{0}(v,Z)$
$$
= \chi{\dss \sum_{m',m^{\prime\prime}}}
\zeta_{m',(\trans{\tilde a})^{-1}m^{\prime\prime}}
\theta_k\thch{ m^{\prime\prime}}{0}(v,Z)=
\chi{\dss \sum_{m^{\prime\prime}}}
\tilde  u_{m,m^{\prime\prime}}
\theta_k\thch{ m^{\prime\prime}}{0}(v,Z),\hphantom{=\;\,}
$$
where $\tilde  u_{m,m^{\prime\prime}}$ is as in the statement of the theorem and
$\chi(v,Z)=\chi_2((\trans{ d})^{-1}v,(\trans{ d})^{-1}Zd^{-1})$.
\end{proof}

Together with Igusa's theorem and Lemma~\ref{miracle}, Theorem~\ref{TTF} obviously implies the following corollary.

\begin{corollary}\label{G-to-PU}
For every even $k\geq 0$, the application of Theorem~\ref{TTF} to the theta functions $\theta_{m,k}$ for the lattice $\Lambda$ introduced in Definition~\ref{theta-fun} provides a map $g\mapsto \tilde U_g$, where $\tilde U_g$ is the complex matrix $(\tilde u_{m,m^{\prime}})$ of size $k^3$ defined in the statement of Theorem~\ref{TTF} with $\gamma_g$ in place of $\gamma$, and this map provides a group homomorphism from $G$ to the projective unitary group $\mathbf P\mathbf U(k^3)=\mathbf U(k^3)/\text{$($homotheties$)$}$.
\end{corollary}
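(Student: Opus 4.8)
The plan is to obtain the corollary by feeding the matrices $\gamma_g\in\Sp(6,\ZZ)$ of Section~\ref{sec:theta formula} into Theorem~\ref{TTF} and into Igusa's Theorem~\ref{Igusa}, Lemma~\ref{miracle} being used to check that the hypotheses apply and, crucially, that each $\gamma_g$ \emph{fixes} the period $Z$. The case $k=0$ is trivial ($H^0(\JJJ,\OOO_\JJJ)=\CC$ and $\mathbf P\mathbf U(1)$ is the trivial group), so fix an even $k\geq2$ and write $\gamma_g=\left(\begin{smallmatrix}a&b\\c&d\end{smallmatrix}\right)$. By Lemma~\ref{miracle}\,\eqref{miracle-3} we have $\det d=\pm1$, and by \eqref{Z-period}--\eqref{B-matrix} the period $Z=\tau B$ is of the form required in Theorem~\ref{TTF} ($B$ real symmetric positive definite, $\Im\tau>0$). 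Thus Theorem~\ref{TTF} applies with $r=3$ and yields the $k^3\times k^3$ matrix $\tilde U_g=(\tilde u_{m,m'})_{m,m'\in P_k}$ together with ${\theta_k\thch{m}{0}}^{\gamma_g}=\chi_{\gamma_g}\sum_{m'\in P_k}\tilde u_{m,m'}\theta_k\thch{m'}{0}$, with $\chi_{\gamma_g}$ nowhere vanishing and analytic. By Lemma~\ref{miracle}\,\eqref{miracle-1}, $(aZ+b)(cZ+d)^{-1}=Z$, so by \eqref{mod-transf-def} the transform ${\theta_k\thch{m}{0}}^{\gamma_g}$ is again a degree-$k$ theta function for the \emph{same} period $Z$; by Definition~\ref{theta-fun} and the computation of Corollary~\ref{TF-long-ch} this identity is exactly the transformation law of the basis $\{\theta_{m,k}\}_{m\in P_k}$ of $H^0(\JJJ,\LLL^k)$ under $g$, which is meaningful precisely because $g^*\LLL^k\simeq\LLL^k$ for even $k$ (Corollary~\ref{non-inv}). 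Since ${\theta_k\thch{m}{0}}^{\gamma_g}$ therefore has \emph{constant} coordinates in the basis $\{\theta_k\thch{m'}{0}\}$ and $\tilde U_g$ is invertible, comparing with the displayed identity forces $\chi_{\gamma_g}$ to be a nonzero constant $\mu_g$, so that $\mu_g\tilde U_g$ is the honest matrix of the $g$-action on $H^0(\JJJ,\LLL^k)$ in this basis.

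Next I would identify $[\tilde U_g]$ as an element of $\mathbf P\mathbf U(k^3)$. Applying Igusa's Theorem~\ref{Igusa} to $\gamma_g$ with $c_1=c_2=0$, and using — since $k$ is even — that $\theta_k\thch{m+\frac12(c\trans d)_0}{\frac12(a\trans b)_0}$ equals a constant multiple of $\theta_k\thch{m''}{0}$, where $m''\in P_k$ represents $m+\frac12(c\trans d)_0$, one re-expresses the same honest transition matrix of the $g$-action on $H^0(\JJJ,\LLL^k)$ as a constant multiple of the unitary matrix $(u_{m,m'})$ of Igusa's theorem. Comparing with the first paragraph, $\tilde U_g=\lambda_g\,U_g$ for some $\lambda_g\in\CC^*$ and $U_g$ unitary; hence $[\tilde U_g]=[U_g]$ is a well-defined element of $\mathbf P\mathbf U(k^3)$.

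The homomorphism property is then formal: $g\mapsto\gamma_g$ is a group homomorphism into $\Sp(6,\ZZ)$, and $\gamma\mapsto(\theta\mapsto\theta^{\gamma})$ respects composition up to the automorphy cocycle built into the transformation formula; since each $\gamma_g$ fixes $Z$ (Lemma~\ref{miracle}\,\eqref{miracle-1}), the space $H^0(\JJJ,\LLL^k)=\bigoplus_{m\in P_k}\CC\,\theta_{m,k}$ is stable under all $\gamma_g$, and on it $[\tilde U_g]$ is the class of the automorphism of $H^0(\JJJ,\LLL^k)$ induced by the action of $g$ on $(\JJJ,\LLL^k)$, well defined up to scalar — so $g\mapsto[\tilde U_g]$ is a group homomorphism $G\to\mathbf P\mathbf U(k^3)$. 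The one delicate point is the comparison used in the second paragraph: Theorem~\ref{TTF} is tailored so that ${\theta_k\thch{m}{0}}^{\gamma}$ is re-expanded only in terms of theta functions with characteristics in $P_k$ and vanishing lower characteristic, whereas Igusa's half-integer shifts $\tfrac12(c\trans d)_0,\tfrac12(a\trans b)_0$ are nonzero for a general $g\in G$; reconciling the two is exactly where the parity of $k$ is needed — for odd $k$ those shifts produce genuine $2$-torsion translates of $\LLL$ and $g$ fails to act on $H^0(\JJJ,\LLL^k)$ at all (cf.\ Corollary~\ref{non-inv}). Granting, as the discussion preceding Theorem~\ref{TTF} already asserts, that $\tilde U_g$ is a scalar multiple of Igusa's unitary matrix, everything else is routine bookkeeping.
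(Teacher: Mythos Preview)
Your overall structure --- use Lemma~\ref{miracle} to see that Theorem~\ref{TTF} applies to each $\gamma_g$, compare with Igusa's unitary matrix to land in $\mathbf P\mathbf U(k^3)$, and read off the homomorphism property from the fact that $[\tilde U_g]$ is the projective class of the $g$-action on $H^0(\JJJ,\LLL^k)$ --- is exactly what the paper has in mind; its own proof is the single sentence ``Together with Igusa's theorem and Lemma~\ref{miracle}, Theorem~\ref{TTF} obviously implies the following corollary.'' So on the level of approach there is nothing to add.

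There is, however, a concrete slip in your first paragraph. The factor $\chi_{\gamma_g}(v,Z)$ of Theorem~\ref{TTF} is \emph{not} a constant in $v$: for any $g\notin W$ the block $c$ of $\gamma_g$ is nonzero, and already Igusa's prefactor $e^{\pi i k\,\trans{v}(cZ+d)^{-1}cv}$ (Theorem~\ref{Igusa}) depends genuinely on $v$; tracing the proof of Theorem~\ref{TTF} shows $\chi$ carries the same exponential. Equivalently, $\theta_k\thch{m}{0}^{\gamma_g}(\,\cdot\,,Z)$ is a section of $g^*\LLL^k$, not of $\LLL^k$, so it does \emph{not} lie in the $\CC$-span of the $\theta_k\thch{m'}{0}(\,\cdot\,,Z)$; only after dividing by the automorphy cocycle does it. Hence your deduction ``forces $\chi_{\gamma_g}$ to be a nonzero constant $\mu_g$'' is wrong. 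Fortunately nothing downstream needs it: the comparison with Igusa in your second paragraph gives two expressions $\theta^{\gamma_g}=\chi\cdot\tilde U_g\,\theta=\chi'\cdot U_g\,\theta$ with $\tilde U_g,U_g$ constant matrices and $\chi,\chi'$ nowhere-vanishing functions of $v$; linear independence of the $\theta_{m'}$ then forces $\chi/\chi'$ to be a nonzero constant and hence $\tilde U_g=\lambda_g U_g$ with $U_g$ unitary, as you want. Likewise your third-paragraph argument for the homomorphism property is correct and independent of the false constancy claim: the choice of isomorphism $g^*\LLL^k\simeq\LLL^k$ is unique up to scalar, so $[\tilde U_g]$ is the well-defined projective class of the induced automorphism of $H^0(\JJJ,\LLL^k)$, and this is automatically multiplicative in $g$. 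Just delete the sentence about $\chi_{\gamma_g}$ being constant and the phrase ``$\mu_g\tilde U_g$ is the honest matrix'', and the proof stands.
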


\section{Unitary action of \texorpdfstring{$\boldsymbol{G}$}{G} on theta functions and its character}
\label{sec:Unitary action}

Neither Igusa's theorem nor our approach applied in Theorem~\ref{TTF} allows us to conclude that the matrices $\tilde U_g$ can be normalized by multiplying by some constants $\epsilon_g$ in such a way that the normalized map $g\mapsto U_g=\epsilon_g\tilde U_g$ is a group homomorphism $G\to \mathbf U(k^3)$. However, we managed to find convenient constants $\epsilon_g$ by trial and error. We will write $U_g^{(k)}, \tilde U_g^{(k)}$ when we want to specify the degree $k$ of theta functions on which $U_g$, $\tilde U_g$ act.

\begin{proposition}\label{U-matrices}
Let $r_1,r_2,r_3$ be the basic reflections generating $G$ introduced in Section~\ref{sec:prelim}. Set $U_j=\frac1{k^3}\tilde U_{r_j}$ for $j=1,2$ and $U_3=\frac1{ik^3}\tilde U_{r_3}$. Then $U_1,U_2,U_3\in \mathbf{U}(k^3)$. We denote by $U_G^{(k)}$ the subgroup of $\mathbf{U}(k^3)$ generated by these three matrices.
\begin{enumerate}
\item 
The $U_j$ satisfy the same relations \eqref{rel-ri} as the basic reflections $r_j$:
$$U_1^2=U_2^2=U_3^2=(U_1U_2)^4=(U_2U_3)^4=\\ (U_3U_1)^3=(U_1U_2U_1U_3)^3=1.$$
\item 
$U_G^{(2)}\simeq H$ and  $U_G^{(k)}\simeq G$ for all even $k\geq 4$.
\end{enumerate}
\end{proposition}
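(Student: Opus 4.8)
The plan is to establish the unitarity of $U_1,U_2,U_3$ uniformly, then to deduce the relations of part~(i) from the projective homomorphism of Corollary~\ref{G-to-PU} together with an explicit normalization check, and finally to read off part~(ii) from the lattice of normal subgroups of $G$. For unitarity, recall from Section~\ref{sec:Unitary action} that the matrix $(\tilde u_{m,m'})$ of Theorem~\ref{TTF} is a scalar multiple $\mu_g\,(u_{m,m'})$ of Igusa's unitary matrix, so it is enough to show $|\mu_g| = k^3$ for $g = r_1,r_2,r_3$. Since $\tilde b$ and $\tilde c$ are real symmetric, every exponential occurring in the formula of Theorem~\ref{TTF} has modulus~$1$; expanding $\sum_{m,m'\in P_k}|\tilde u_{m,m'}|^2$, writing the square of the inner sum as a double sum over $\hat m,\hat m'\in P_k$, and carrying out the summation over $m\in P_k$ first --- which by orthogonality of the characters of $\tfrac1k\ZZ^3/\ZZ^3$ forces $\hat m\equiv\hat m'$ and cancels all remaining phases --- gives $\sum_{m,m'}|\tilde u_{m,m'}|^2 = k^{9}$. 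Comparing with $|\mu_g|^2\sum_{m,m'}|u_{m,m'}|^2 = |\mu_g|^2k^3$ yields $|\mu_g| = k^3$ for every $g\in G$ with $\det d_g = \pm1$; in particular $\tfrac1{k^3}\tilde U_{r_1}$, $\tfrac1{k^3}\tilde U_{r_2}$ and $\tfrac1{ik^3}\tilde U_{r_3}$ all belong to $\mathbf U(k^3)$.

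For part~(i), Corollary~\ref{G-to-PU} gives a homomorphism $G\to\mathbf P\mathbf U(k^3)$, $g\mapsto[\tilde U_g]$; since the $r_j$ satisfy~\eqref{rel-ri}, the classes $[U_j]$ satisfy them too, so each of the seven words $R(U_1,U_2,U_3)$ in~\eqref{rel-ri} equals $\zeta_R\,\id$ for a scalar $\zeta_R$, with $|\zeta_R| = 1$ by the unitarity just proved. It remains to verify $\zeta_R = 1$ for each of the seven relations. For this I would compute the matrices $\gamma_{r_1},\gamma_{r_2},\gamma_{r_3}\in\Sp(6,\ZZ)$ explicitly: for $r_1,r_2\in W$ one has $c = 0$, so by Theorem~\ref{TTF} the matrices $\tilde U_{r_1},\tilde U_{r_2}$ are monomial, each equal to $k^3$ times (a diagonal unitary)$\cdot$(the permutation of $P_k$ attached to $d_{r_j}\in\mathrm{GL}_3(\ZZ)$), while for $r_3\notin W$ one must evaluate the Gauss sums of Theorem~\ref{TTF}. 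With these explicit matrices one then checks $R(U_1,U_2,U_3) = \id$ directly, for which it suffices to compute one diagonal entry of each such word. This last step is exactly where the normalizations $\tfrac1{k^3}$ and $\tfrac1{ik^3}$ are needed (the extra factor $i$ on $U_3$ being forced by $r_3\notin W$, which also makes $\tilde U_{r_3}$, unlike $\tilde U_{r_1}$ and $\tilde U_{r_2}$, non-monomial); since this verification has to be carried out for all even $k$ and not just $k = 2$, I expect the normalization bookkeeping to be the main obstacle, and the phrase ``found by trial and error'' in the text refers to precisely this.

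Once part~(i) holds, it provides a surjective homomorphism $\pi\colon G\to U_G^{(k)}$ with $\pi(r_j) = U_j$, and since $G = \{\pm1\}\times H$ with $H$ simple, $\ker\pi$ is one of $1$, $\{\pm I\}$, $H$, $G$. From $\gamma_{-I} = -I_6$ and Theorem~\ref{TTF} one computes that $\tilde U_{-I}$ equals $k^3$ times the permutation matrix $\theta_{m,k}\mapsto\theta_{-m,k}$ of the chosen basis of $H^0(\JJJ,\LLL^k)$; being unitary of order~$2$ and proportional to this permutation, $\pi(-I)$ is $\pm1$ times it. For even $k\geq4$ this permutation is non-trivial (take $m = (1/k,0,0)$, so that $2m\notin\ZZ^3$) and is not $-I$ (it fixes $\theta_{0,k}$), so $\pi(-I)\neq\id$; this excludes $\ker\pi\in\{\{\pm I\},G\}$, and $\ker\pi = H$ is impossible because it would give $U_1U_2 = \pi(r_1r_2) = \id$ --- as $r_1r_2\in H$ --- hence $U_1 = U_2$, whereas $\tilde U_{r_1}$ and $\tilde U_{r_2}$ are proportional to the distinct permutations attached to $d_{r_1}\neq d_{r_2}$. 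Thus $\ker\pi = 1$ and $U_G^{(k)}\simeq G$. For $k = 2$ the permutation $\theta_{m,2}\mapsto\theta_{-m,2}$ is the identity (every $m\in P_2$ has $2m\in\ZZ^3$), so $\pi(-I) = \pm\id$; the chosen normalization makes $\pi(-I) = \id$ (again a short computation), so $\{\pm I\}\subseteq\ker\pi$, while $U_1^{(2)}\neq\id$ excludes $\ker\pi = G$, leaving $\ker\pi = \{\pm I\}$ and $U_G^{(2)}\simeq G/\{\pm I\}\simeq H$.
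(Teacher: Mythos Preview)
Your Frobenius-norm computation of $|\mu_g|=k^3$ is clean and correct, and your approach to part~(ii) via the normal-subgroup lattice of $G=\{\pm1\}\times H$ is sound in outline. But your proof of part~(i) is not complete: you correctly reduce to showing that each relation scalar $\zeta_R$ equals~$1$, and then you write only that ``one then checks $R(U_1,U_2,U_3)=\id$ directly'' while acknowledging that doing so for all even $k$ is ``the main obstacle''. That is the gap. You have no argument that the verification is finite, and the words involving $U_3$ produce genuine Gauss-sum expressions (not monomial matrices), so ``compute one diagonal entry'' is not a one-line calculation uniformly in~$k$. Your part~(ii) inherits this gap and adds a second unproved step, the sign determination $\pi(-I)=+\id$ for $k=2$, which again requires expressing $-I$ as a word through $r_3$.

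The idea you are missing is that the verification \emph{is} finite. The paper checks the relations only for $k=2,4,6$ by direct machine computation (Macaulay2), and then invokes the fact that the algebra $\bigoplus_{p\geq0}H^0(\JJJ,\LLL^{2p})$ of even-degree theta functions on a p.p.a.v.\ is generated in degrees~$\leq6$. Since the underlying action $\theta\mapsto\theta^{\gamma_j}$ is a substitution and hence multiplicative, once the $U_j^{(k)}$ satisfy the defining relations on a generating set of the algebra they satisfy them everywhere; thus the low-degree checks force $\zeta_R^{(k)}=1$ for all even~$k$. In short: rather than attempt a uniform Gauss-sum computation, reduce to $k=2,4,6$ and do those by computer. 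The paper handles part~(ii) the same way, so your more structural argument there is a genuine alternative---provided part~(i) is in hand.
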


\begin{proof} We verify this for $k=2,4,6$ by direct computation using the computer algebra system Macaulay2~\cite{M2}; the result for all even $k$ follows from the fact that  the algebra of even-degree theta functions on a  p.p.a.v. is generated in degrees at most~$6$.
\end{proof}

We thus have a unitary representation $\boldsymbol\rho_k$ of $G$ on the space $H^0(\JJJ,\LLL^k)$ of dimension $k^3$ for each even $k\geq 0$, defined by substituting the $U_j$ for the $r_j$ in the words in the $r_j$ defining all the elements of $G$. We denote by $\boldsymbol\chi_k$ the character of this representation. In order to determine it, we start by fixing the choice of representatives of the conjugacy classes of $G$.  Klein's simple group $H$ has six conjugacy classes, represented by the following elements:
$$
g_1=1,\quad g_2=\rho_1,\quad g_3=\rho_1\rho_3\rho_1\rho_2,
\quad  g_4=\rho_1\rho_2,\quad g_7=\rho_1\rho_2\rho_3,\quad g_7^{-1},
$$
where the $\rho_i=-r_i$ are the basic antireflections and the subscript $p$ in $g_p$ stands  for the order of $g_p$.

The conjugacy classes of $G$ are deduced from these in an obvious way: to every conjugacy class $\Cl_H(g)$ in $H$ correspond two conjugacy classes in $G$ of the same length: $\Cl_G(g)=\Cl_H(g)$ and $\Cl_G(-g)=-\Cl_H(g)$. Also, to each irreducible representation $f$ of $H$ correspond two irreducible representations of $G$, $\tilde f=f\circ\pi$ and $\tilde f\otimes \det$, where $\pi\colon G\to H\simeq G/\langle -1\rangle$ is the natural surjection. 

The lengths of the conjugacy classes of $H$ are given by the following table, providing the characters of $H$. The characters of $G$ are easily deduced from it.

\begin{center}
\begin{tabular}{|c|c|c|c|c|c|c|}
\hline\rule[0pt]{0pt}{\heightof{$g_y^{-1}$}+.4ex}
\ \ \ \ $g$\ \ \ \ & $g_1$ & $g_2$ & $g_3$ & $g_4$ & $g_7$ & $g_7^{-1}$ \\[.2ex]
\hline
\ \ \ \ $|\mathrm{Cl}_{H}(g)|$\ \ \ \  &1&21&56&42&24&24\ \\
\hline
\ \ \ \  \ $\chi_{\mathbf 1}$\ \  \ \ \  \ & 1&1&1&1&1&1\\
\hline
\ \ \ \  \ $\chi_{\mathbf 3}$\ \  \ \ \  \ & 3&$-1$&0&1&$-\bar\alpha$&$-\alpha$\\
\hline
\ \ \ \  \ $\chi_{\bar{\boldsymbol 3}}$\ \  \ \ \  \ & 3&$-1$&0&1&$-\alpha$&$-\bar\alpha$\\
\hline
\ \ \ \  \ $\chi_{\mathbf 6}$\ \  \ \ \  \ & 6&2&0&0&$-1$&$-1$\\
\hline
\ \ \ \  \ $\chi_{\mathbf 7}$\ \  \ \ \  \ & 7&$-1$&1&$-1$&0&0\\
\hline
\ \ \ \  \ $\chi_{\mathbf 8}$\ \  \ \ \  \ & 8&0&$-1$&0&1&1\\
\hline
\end{tabular}
\end{center}
\bigskip

\begin{theorem}\label{char-rho}
For any even $k> 0$, the character $\boldsymbol{\chi}_k$ of $\boldsymbol{\rho}_k$ takes the following values on the above representatives of the conjugacy classes:

\begin{center}
\begin{tabular}{|c|c|c|c|c|c|c|c|c|c|c|c|}
\hline
$g_1$ & $-g_1$ & $g_2$ & $-g_2$ & $g_3$ & $-g_3$ & $g_4$ &$-g_4$&  $g_7$ & $-g_7$ & \T$g_7^{-1}$& \T $- g_7^{-1}$  \\[.3ex]
\hline
$k^3$  & $8$ & $2k$ & $k^2$ & $k$ & $2$ & $k$ & $3+(-1)^{\frac k2}$\T&
\makebox[3em]{${\genfrac{(}{)}{1pt}{1}{k}{7}\ {\rm if}\ 7\nmid k,\T\atop
 {-i\sqrt7\ {\rm if}\ 7\mid k\B}}$} & $1$ & \makebox[3em]{${\genfrac{(}{)}{1pt}{1}{k}{7}\ {\rm if}\ 7\nmid k,\T\atop
 {i\sqrt7\ {\rm if}\ 7\mid k\B}}$} & $1$   \\
\hline
\end{tabular}\\[10pt]
\end{center}

where
$$
\genfrac{(}{)}{1pt}{0}{k}{7}=\left\{\begin{array}{lll}
1 & \mbox{\rm if} & k\equiv 1,2\ \mbox{\rm or}\ 4 \mod 7,\\
-1 & \mbox{\rm if} & k\equiv 3,5\ \mbox{\rm or}\ 6 \mod 7,\\
0 & \mbox{\rm if} & k\equiv 0 \mod 7
\end{array}\right. 
$$
is the Legendre symbol.

\end{theorem}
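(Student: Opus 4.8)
The plan is to evaluate $\boldsymbol\chi_k(g)=\tr\boldsymbol\rho_k(g)$ on one representative of each conjugacy class of $G$, using that $\boldsymbol\rho_k(g)$ is the word in $U_1,U_2,U_3$ obtained from any word for $g$ in the $r_j$ (Proposition~\ref{U-matrices}), and that the resulting action of $\boldsymbol\rho_k(g)$ on the basis $\{\theta_{m,k}\}_{m\in P_k}$ of $H^0(\JJJ,\LLL^k)$ is given explicitly by the Gauss-sum formula of Theorem~\ref{TTF} --- or, for $g\notin W$, by Igusa's Theorem~\ref{Igusa} with its half-integer shift of characteristics. The first reduction uses the central element $-1\in G$, which acts on $\JJJ$ by negation and hence on $H^0(\JJJ,\LLL^k)$ by $\theta_{m,k}\mapsto\theta_{-m,k}$ (the normalization of Proposition~\ref{U-matrices} being this geometric one), i.e.\ by the permutation matrix of $m\mapsto-m$ on $P_k\cong(\tfrac1k\ZZ/\ZZ)^3$; for $k$ even this fixes exactly the $8$ points of $(\tfrac12\ZZ/\ZZ)^3$, so $\boldsymbol\chi_k(-1)=8$ and $\boldsymbol\chi_k(-g)=\sum_{m\in P_k}\bigl(\boldsymbol\rho_k(g)\bigr)_{-m,m}$ for every $g$. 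It thus suffices to compute $\tr\boldsymbol\rho_k(g)$ and this ``antitrace'' for the six representatives $g_1,g_2,g_3,g_4,g_7,g_7^{-1}$ of the classes of $H$.

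The classes of $g_1,g_2,g_3,g_4$ --- of orders $1,2,3,4$ --- and of their products with $-I$ all have representatives inside $W=G\cap\mathbf O(3)$ (only the order-$7$ classes avoid $W$, since $7\nmid|W|=48$), so by Corollary~\ref{non-inv} no characteristic is shifted and $\boldsymbol\rho_k(g)$ is given directly by Theorem~\ref{TTF}, up to the unit scalar fixed in Proposition~\ref{U-matrices}. For $g_1=1$ the two numbers are $k^3$ and $8$. For the others, the diagonal and the $(-m,m)$-entries of $\boldsymbol\rho_k(g)$ are quadratic exponential sums over $(\ZZ/k)^3$ built from the integral matrices $\tilde b,\tilde c,d$ attached to $\gamma_g$; summing over $m\in P_k$, these reduce --- for such short words --- to counting solutions of linear congruences plus at most one one-variable quadratic Gauss sum, and produce the entries $2k$, $k^2$, $k$, $2$, $k$, $3+(-1)^{k/2}$ (for $g_2,-g_2,g_3,-g_3,g_4,-g_4$ respectively).

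The substantial case is the order-$7$ family $\Cl_G(g_7),\Cl_G(g_7^{-1})$ and their opposites. Here $\gamma_{g_7}\in\Sp(6,\ZZ)$ has order $7$ with characteristic polynomial $\Phi_7$, so $g_7$ acts on $V$ without eigenvalue $1$ and has isolated fixed points on $\JJJ$; one extracts $\tr\boldsymbol\rho_k(g_7)$ from Theorem~\ref{TTF} (equivalently from the holomorphic Lefschetz fixed-point formula, whose denominator $\det(\id-g_7)$ on $V$ has absolute value $\sqrt7$ because $\prod_{j=1}^6(1-\zeta_7^{j})=\Phi_7(1)=7$). Since $\Phi_7(x)\equiv(x-1)^6\pmod7$, the reduction of $\gamma_{g_7}$ modulo $7$ is unipotent with a one-dimensional fixed space, and the sum that appears is --- up to a root of unity and the sign of a Gauss sum --- a one-variable sum over $\ZZ/7$ depending only on $k\bmod7$: a quadratic Gauss sum when $7\nmid k$, equal by Gauss's theorem (using $7\equiv3\pmod4$) to $\sqrt7\cdot\left(\tfrac{k}{7}\right)$, whence $\tr\boldsymbol\rho_k(g_7)=\left(\tfrac{k}{7}\right)$; and a constant sum when $7\mid k$, giving the pure imaginary $\mp i\sqrt7$, with the two signs attached to $g_7$ and $g_7^{-1}$ forced by $\gamma_{g_7^{-1}}=\gamma_{g_7}^{-1}$. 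The opposite classes and the corresponding antitraces come out to the constant $1$ by the same analysis with $\id+g_7$ in place of $\id-g_7$ (and $\prod_{j=1}^6(1+\zeta_7^{j})=1$).

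The step I expect to be the real obstacle is keeping the scalar normalizations exact. Theorem~\ref{TTF} and Igusa's theorem pin down $\boldsymbol\rho_k(g)$ only up to a root of unity, and although Proposition~\ref{U-matrices} fixes the generators $U_1,U_2,U_3$, every product of them accumulates a phase that has to be tracked --- the answers are sensitive to it (the sign of $\pm i\sqrt7$, the sign in $3+(-1)^{k/2}$, even the recognition that $7\mid k$ is the degenerate branch), and the case $7\mid k$ is delicate precisely because there the Gauss sum changes type. To secure this I would carry out the exponential-sum evaluations as closed-form functions of $k$ and, as a safeguard, cross-check every table entry against a direct Macaulay2 computation of $\tr\boldsymbol\rho_k(g)$ for the small even values $k=2,4,6,8,10,12,14$, which already exhibit every residue of $k$ modulo $7$, both branches of the dichotomy $7\mid k$ versus $7\nmid k$, and the dependence on $k\bmod4$.
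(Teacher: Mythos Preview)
Your strategy coincides with the paper's at the top level: both compute $\tr\boldsymbol\rho_k(g)$ as an exponential sum coming from Theorem~\ref{TTF}. Where you diverge is in how that sum is evaluated.

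For the classes meeting $W$ you replace the paper's uniform calculation by the permutation action $\theta_{m,k}\mapsto\theta_{dm,k}$ and a fixed-point count. This is a genuine simplification and is correct, but two points need attention. First, $d$ is the matrix of $g$ in the basis of columns of $\omega_2=\alpha C$, not the standard basis; the fixed-point counts you need (e.g.\ $k$ for $g_4$) come out right only after this base change. Second, you must check that $\boldsymbol\rho_k|_W$ is \emph{exactly} the permutation representation, not its twist by the sign character: $W$ is not generated by $r_1,r_2$ alone, so words for elements of $W$ involve $U_3$, and the extra factor of $i^{-1}$ in its normalization has to be shown to cancel. This is the scalar issue you flag, and it is not bypassed by restricting to $W$.

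For $g_7$ the paper does something concrete that your proposal omits. It writes $\Sigma_k=\sum_m\tilde u_{m,m}$ as a six-variable quadratic exponential sum $\sum_{x\in(\ZZ/k)^6}e^{\pi i K[x]/k}$ and then \emph{diagonalizes the integer quadratic form $K$ over $\ZZ$} to $y_1^2+y_2^2+y_3^2-y_4^2-y_5^2-7y_6^2$; this factorizes $\Sigma_k$ into a product of six one-variable Gauss sums modulo $2k$, five with coefficient $\pm1$ and one with coefficient $-7$, and the closed formula for all even $k$ follows from the standard evaluation of $G(q,2k)$. Your assertion that ``the sum that appears is a one-variable sum over $\ZZ/7$'' is not what happens: the modulus is $2k$, not $7$, and the $7$ enters as a coefficient of the quadratic form. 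The heuristic ``$\gamma_{g_7}\bmod 7$ is unipotent with one-dimensional fixed space'' does not by itself produce this factorization. Your Lefschetz alternative is valid in principle --- the seven fixed points of $g_7$ on $\JJJ$ do give a sum over $\ZZ/7$ --- but you would still have to compute the action on the fibre $\LLL^k_p$ at each fixed point, identify that sum as a quadratic Gauss sum, and confirm that the geometric action agrees with $\boldsymbol\rho_k$ rather than $\boldsymbol\rho_k\otimes\det$; none of this is carried out. The normalization constant the paper uses, $\epsilon_{g_7}=\tfrac1{ik^3}$, is exactly what absorbs the five ``trivial'' Gauss-sum factors and fixes the sign of $\pm i\sqrt7$; without the diagonalization you have no mechanism to see this.

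The Macaulay2 check you propose is a good safeguard, but since the statement is a closed-form identity in $k$, verification at $k\le14$ is evidence, not proof; you still need the diagonalization (or a fully worked-out Lefschetz computation) to close the argument.
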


\begin{proof}
  The result is obvious for $g_1$ and needs some reasoning, following the same pattern, in the other cases. We will illustrate this reasoning on the example of $g=g_7$, where the details of the calculation are the  most involved. We first observe that the normalization constant for $g=g_7$ is $\epsilon_{g}=\frac1{ik^3}$, so that
$$
\boldsymbol{\chi}_k(g)=\tr U_g=\frac{1}{ik^3}\Sigma_k,\quad \text{where}\ \Sigma_k=\sum_{m\in P_k}\tilde u_{m,m}.
$$
By Theorem~\ref{TTF},
$$
\Sigma_k=
\sum_{m, m'\in P_k}e^{\pi i k  
\left(2\trans{ m'}(\id-d) m+\tilde b[m]+\tilde c[m']\right)}=
\sum\limits_{0\leq x_i \leq k-1}e^{\frac{\pi i}{k}K[x]},
$$
where we pass to the summation over the integer column vector 
$x=\trans{(x_1,\ldots,x_6)}=k\trans{ (m',m)}\in\ZZ^6$, $K[x]=\trans{ x} K x$ as before and $K$ is the following integer matrix of size~6:
$$
K=\begin{pmatrix}\tilde c&I_3-d\\ I_3-\trans{d}&\tilde b\end{pmatrix}.
$$
Explicitly, we have
$$
\gamma_g=\begin{pmatrix}
      \begin{smallmatrix}
      -1&0&1\TopStrut{2} \\ 
      -1&1&0 \\ 
      0&0&1
      \BStrut{0.8} 
      \end{smallmatrix} 
      & \rvline &
      \begin{smallmatrix}
      0&-2&-1\TopStrut{2} \\ 
      0&-4&-2\\ 
      -1&-3&-2
      \BStrut{0.8} 
      \end{smallmatrix} 
      \\ \hline
      \begin{smallmatrix}
      0&0&0\TopStrut{1.5}\\ 
      -1&1&-1\\ 
      1&-1&2
      \BStrut{1.5} 
      \end{smallmatrix} 
      & \rvline &
      \begin{smallmatrix}
      -1&-1&0\TopStrut{1.5}\\ 
      1&0&0\\ 
      -1&-1&-1 
      \BStrut{1.5}
      \end{smallmatrix}
     \end{pmatrix}, \quad 
K=\begin{pmatrix}
\begin{smallmatrix}
      0&0&0\TopStrut{2}\\
      0&1&-1\\
      0&-1&2
      \BStrut{0.8} 
      \end{smallmatrix} 
      & \rvline &
      \begin{smallmatrix}
      2&1&0\TopStrut{2}\\
      -1&1&0\\
      1&1&2
      \BStrut{0.8} 
      \end{smallmatrix} 
      \\ \hline
      \begin{smallmatrix}2&-1&1\TopStrut{1.5}\\
      1&1&1\\
      0&0&2\BStrut{1.5} \end{smallmatrix} & \rvline &
      \begin{smallmatrix}\:1\ &2\ &1\TopStrut{1.5}\\
      2&4&2\\
      1&2&2
       \BStrut{1.5}\end{smallmatrix}
     \end{pmatrix}.
$$
By Gauss diagonalization over $\ZZ$, we reduce the quadratic form $x\mapsto K[x]$ to the diagonal representation
$y=(y_1,\ldots,y_6)\mapsto y_1^{ 2}+y_2^{ 2}+y_3^{ 2}-y_4^{ 2}-y_5^{ 2}-7y_6^{ 2}$. Thus
\begin{align*}
\Sigma_k&=\sum_{0\leq x_i \leq k-1}e^{\frac{\pi i}k K[x]}=
\sum_{0\leq y_i \leq k-1}e^{\frac{\pi i}k(y_1^{ 2}+y_2^{ 2}+y_3^{ 2}-y_4^{ 2}-y_5^{ 2}-7y_6^{ 2})}\\
&=\left(\sum_{y=0}^{k-1}e^{\frac{\pi i}ky^2}\right)^3\left(\sum_{y=0}^{k-1}e^{-\frac{\pi i}ky^2}\right)^2
\left(\sum_{y=0}^{k-1}e^{-\frac{7\pi i}ky^2}\right).
\end{align*}
The exponential sums in the last line belong to the class of Gauss sums. We use the following formula for Gauss sums from~\cite{BEW}:
$$
G(q,r)=\sum_{n=0}^{r-1}e^{\frac{2\pi iq}rn^2}=
(1+i)\kappa_q^{-1}\sqrt r\genfrac{(}{)}{1pt}{0}{r}{q}
$$
if $2\nmid q,\ 4| r$ 
and gcd$(q,r)=1$, where
$\kappa_q=\left\{\begin{array}{lll}
1&\text{if}&q\equiv 1\mod 4,\\
i&\text{if}&q\equiv 3\mod 4
\end{array}\right.
$
and \ $\genfrac{(}{)}{1pt}{0}{r}{q}$ \ is the Jacobi symbol (which coincides with the Legendre symbol when $q\TopStrut{3}$ is prime). Applying this formula, we obtain 
\begin{gather*}
\sum_{y=0}^{k-1}e^{\frac{\pi i}ky^2}=\frac12 G(1,2k)=\frac{1+i}{\sqrt2}\sqrt k,
\\
\sum_{y=0}^{k-1}e^{-\frac{7\pi i}ky^2}=
\begin{cases}
\displaystyle{\frac12 \bar{G(7,2k)}=\frac{1+i}{\sqrt2}\genfrac{(}{)}{1pt}{0}{k}{7}\sqrt k}
&\text{if}\  7\nmid k,\\[3ex]
\displaystyle{\frac72 \,\bar{G(1,2k_1)}=\frac{1-i}{\sqrt2}\sqrt{7k}}
& \text{if}\ k=7k_1.
\end{cases}
\end{gather*}
We thus finish the calculation of $\Sigma_k$ and obtain the value of $\boldsymbol{\chi}_k(g)$ given in the table.
\end{proof}

By taking the scalar product of $\boldsymbol{\chi}_k$ with the trivial character, we obtain the following. 

\begin{theorem}\label{main}
The Hilbert function of the algebra of invariant theta functions
$$
S\left(\LLL^2\right)^G=\bigoplus_{p=0}^\infty H^0\left(\JJJ,\LLL^{2p}\right)^G
$$
is given by
$$
h_{S(\LLL^2)^G}(\tfrac k2) =\tfrac1{336}\left[k^{3}+21\,k^{2}+140\,k+294 + (-1)^{\frac k2}
\times 42+48\genfrac{(}{)}{1pt}{1}{k}{7}\right].
$$
This coincides with the Hilbert function of the second Veronese algebra of\, $\PP(1,2,4,7)$.
\end{theorem}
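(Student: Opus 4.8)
The plan is to extract the $G$-invariant theta functions in each degree by character theory, feeding in the character table of Theorem~\ref{char-rho}, and then to identify the resulting quasi-polynomial in $k$ with the Hilbert function of the second Veronese algebra of $\PP(1,2,4,7)$. Since $\boldsymbol\rho_k$ is a representation of $G$ on $H^0(\JJJ,\LLL^k)$ for every even $k\geq 0$ (Proposition~\ref{U-matrices}), Schur orthogonality gives
$$
h_{S(\LLL^2)^G}(\tfrac k2)=\dim H^0(\JJJ,\LLL^k)^G=\langle\boldsymbol\chi_k,\mathbf 1\rangle=\frac1{336}\sum_{g\in G}\boldsymbol\chi_k(g),
$$
so everything reduces to summing $\boldsymbol\chi_k$ over $G$. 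For this I would use the conjugacy-class description recalled before Theorem~\ref{char-rho}: $G$ has the twelve classes $\Cl_G(\pm g_j)$ for $j\in\{1,2,3,4,7\}$ together with $\Cl_G(\pm g_7^{-1})$, and $|\Cl_G(\pm g_j)|=|\Cl_H(g_j)|$, these numbers being $1,21,56,42,24,24$. Weighting the twelve entries of the table of Theorem~\ref{char-rho} by these class sizes and adding them up produces $\sum_{g\in G}\boldsymbol\chi_k(g)$.

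The only delicate point is the arithmetic term. The two classes $\Cl_G(g_7)$ and $\Cl_G(g_7^{-1})$ together contribute $24\boldsymbol\chi_k(g_7)+24\boldsymbol\chi_k(g_7^{-1})$; this equals $48\genfrac{(}{)}{1pt}{0}{k}{7}$ when $7\nmid k$, and equals $24(-i\sqrt7)+24(i\sqrt7)=0$ when $7\mid k$, which is again $48\genfrac{(}{)}{1pt}{0}{k}{7}$ since the Legendre symbol vanishes there; so the combined contribution is $48\genfrac{(}{)}{1pt}{0}{k}{7}$ for all $k$. Likewise $\Cl_G(-g_7)\cup\Cl_G(-g_7^{-1})$ contributes $48$ and $\Cl_G(-g_4)$ contributes $42\bigl(3+(-1)^{k/2}\bigr)$. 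Collecting the powers of $k$ one gets
$$
\sum_{g\in G}\boldsymbol\chi_k(g)=k^3+21k^2+140k+294+42(-1)^{k/2}+48\genfrac{(}{)}{1pt}{0}{k}{7},
$$
and dividing by $336$ yields the stated formula; this is pure bookkeeping, no new idea being involved.

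For the final assertion, recall that the homogeneous coordinate ring $R$ of $\PP(1,2,4,7)$ has Hilbert series $H_R(t)=\prod_{w\in\{1,2,4,7\}}(1-t^w)^{-1}$, so its second Veronese algebra, with $p$-th graded piece $R_{2p}$, has Hilbert series $\tfrac12\bigl(H_R(t)+H_R(-t)\bigr)$; a one-line manipulation gives
$$
\tfrac12\bigl(H_R(t)+H_R(-t)\bigr)=\frac1{2(1-t^2)(1-t^4)}\left(\frac1{(1-t)(1-t^7)}+\frac1{(1+t)(1+t^7)}\right)=\frac{1+t^8}{(1-t^2)^2(1-t^4)(1-t^{14})}.
$$
It then remains to check that $\sum_{p\geq 0}h_{S(\LLL^2)^G}(p)\,s^p$, formed from the closed formula above, equals this same rational function after the substitution $s=t^2$; this is a routine summation of the elementary series $\sum_p p^j s^p$ for $j=0,1,2,3$, of $\sum_p(-1)^p s^p$, and of $\sum_p\genfrac{(}{)}{1pt}{0}{p}{7}s^p$ (using $\genfrac{(}{)}{1pt}{0}{2p}{7}=\genfrac{(}{)}{1pt}{0}{p}{7}$, as $\genfrac{(}{)}{1pt}{0}{2}{7}=1$). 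Equivalently, both sides are quasi-polynomials in $k$ of degree $3$, period dividing $\lcm(1,2,4,7)=28$ and leading term $k^3/336$, so it suffices to verify their agreement on a fixed finite set of even values of $k$.

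Regarding the hard part: there is no deep obstacle once Theorem~\ref{char-rho} is in hand — the substance of the statement is precisely that character computation. The two places demanding care are the uniform handling of the Legendre-symbol term across the cases $7\mid k$ and $7\nmid k$, and getting the constant $294$ (and the coefficients $21,140,42,48$) right when summing over the twelve conjugacy classes.
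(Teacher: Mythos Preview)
Your argument is correct and follows essentially the same route as the paper: the formula is obtained by averaging $\boldsymbol\chi_k$ over $G$ via Theorem~\ref{char-rho} (the paper simply says ``immediate consequence'' where you spell out the class-by-class bookkeeping), and the identification with the second Veronese of $\PP(1,2,4,7)$ is done by computing the rational Hilbert series and then matching two degree-$3$ quasi-polynomials on an initial segment. A minor remark: the paper observes the period is $14$ (not just a divisor of $28$), so it compares $42$ consecutive values; your bound of $28$ is less sharp but of course suffices.
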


\begin{proof}
The formula for the Hilbert function is an immediate consequence of Theorem~\ref{char-rho}. The Hilbert series for 
$\PP=\PP(1,2,4,7)$ is given by
$$
\HSer_\PP(t)=\frac1{(1-t)(1-t^2)(1-t^4)(1-t^7)},
$$
and that of the second Veronese of $\PP$ is
$$
\HSer^{(2)}_\PP(t)=\frac12\left(\HSer_\PP\left(t^{1/2}\right)+\HSer_\PP\left(-t^{1/2}\right)\right)=\frac{1+t^4}{\left(1-t\right)^{2}\left(1-t^2\right)\left(1-t^7\right)}.
$$
Denote by $h_p$ the coefficients of the latter series, so that $\HSer^{(2)}_\PP(t)=\sum_{p\geq 0}h_pt^p$; by~\cite[Theorem 4.4.1]{Sta}, the Hilbert function
$p\mapsto h_p$ is a rational quasi-polynomial of degree $3$ whose coefficients are periodic with period 14. The dominant coefficient being constant, it suffices to compare the initial segments of the sequences $h_{S(\LLL^2)^G}(p)$ and $h_p$ of length 42 to see that they coincide for all $p\geq 0$. In this way, we conclude the proof.
\end{proof} 

\section{Hypersurface in a weighted projective space of dimension 4}
\label{sec:Main Theorem}

For any theta function $\theta$ of even degree $k>0$, we denote by $\pazocal R_G^{(k)}$ the Reynolds operator
\begin{equation}\label{Reynolds}
\pazocal R_G^{(k)}(\theta)=
\frac1{336}\sum_{g\in G}U_g^{(k)}(\theta).
\end{equation}
We will call $\pazocal R_G^{(k)}(\theta)$ the $G$-average of $\theta$; for given $k$, the $G$-averages of theta functions of degree $k$ represent sections of $\LLL^k$ which generate $H^0(\JJJ,\LLL^k)^G$ over $\CC$.

We will identify sections of $\LLL^k$ with regular functions on the total space $\mathbf V(\LLL^{-1})$ of the line bundle $\mathcal L^{-1}$ which are fiber-homogeneous of degree $k$. Explicitly, $\mathbf V(\LLL^{-1})$ is the quotient of the trivial line bundle $\CC\times V$ over $V=\CC^3$ by the action of $\Lambda\simeq \{Zu'+u''\ |\ u',u''\in\ZZ^3\}$:
$$
Zu'+u'' \colon (t,v)\longmapsto \left(e^{2\pi i\left(\trans{v}u'+\frac12Z[u']\right)}t,\ v+Zu'+u''\right),
$$
and to each degree $k$ theta function $\theta$, we associate the $\Lambda$-invariant function
\begin{equation}\label{tilde}
\tilde\theta \colon\CC\times V\to\CC,\quad (t,v)\longmapsto t^k\theta(v),
\end{equation}
which descends to a fiber-homogeneous function of degree $k$ on $\mathbf V(\LLL^{-1})$ denoted by the same symbol $\tilde\theta$.

\begin{theorem}\label{main2}
There exist five $G$-invariant theta functions $\varphi_i=\varphi_{i,k_i}$ of degrees $k_i$, where
$(k_0,\ldots,k_4)=(2,2,4,8,14)$, such that the associated sections of the powers $\LLL^{k_i}$ of $\LLL$
generate $S(\LLL^2)^G$. The ideal of relations between these generators is generated by a single relation of weighted degree $16$, so that the quotient $X=\JJJ/G$ is isomorphic to a hypersurface in the weighted projective space $\PP(1,1,2,4,7)$ of weighted degree $8$.
\end{theorem}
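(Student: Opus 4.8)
The plan is to run a Hilbert-function bootstrap on top of Lemma~\ref{Jac-non-0}. Write $S:=S(\LLL^2)^G=\bigoplus_{p\ge 0}H^0(\JJJ,\LLL^{2p})^G$, graded by $p$; in this grading the five generators we want have weights $\tfrac12(k_0,\dots,k_4)=(1,1,2,4,7)$ and the announced relation has weighted degree $8$ (degree $16$ in the theta-degree $k$). First I would take the algebraically independent $G$-invariant theta functions $\varphi_0,\varphi_1,\varphi_2,\varphi_3$ of weights $1,1,2,4$ furnished by Lemma~\ref{Jac-non-0}, so that $R:=\CC[\varphi_0,\varphi_1,\varphi_2,\varphi_3]\subseteq S$ is a polynomial subring with $\HSer_R(t)=\bigl((1-t)^2(1-t^2)(1-t^4)\bigr)^{-1}$. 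By Theorem~\ref{main}, $\HSer_S(t)=\tfrac{1+t^4}{(1-t)^2(1-t^2)(1-t^7)}$, and a one-line computation gives
$$
\HSer_S(t)-\HSer_R(t)=\frac{t^{7}}{(1-t)(1-t^2)(1-t^4)(1-t^7)},
$$
that is, $t^7$ times the Hilbert series of $\PP(1,2,4,7)$: a power series with non-negative coefficients, vanishing in degrees $<7$ and equal to $1$ in degree $7$. Since $R\subseteq S$, this forces $R_p=S_p$ for $p\le 6$ and $\dim_\CC S_7=\dim_\CC R_7+1$. I would then pick $\varphi_4\in S_7\setminus R_7$, e.g.\ a $G$-average $\pazocal R_G^{(14)}(\theta)$ of a suitable degree-$14$ theta function (possible since these averages span $H^0(\JJJ,\LLL^{14})^G=S_7\supsetneq R_7$; a more principled choice is indicated in Remark~\ref{conj-gen}).

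Next I would extract the relation. Among the weighted-degree-$8$ monomials in variables $y_0,\dots,y_4$ of weights $1,1,2,4,7$, the only ones involving $y_4$ are $y_0y_4$ and $y_1y_4$. If both $\varphi_0\varphi_4$ and $\varphi_1\varphi_4$ lay in $R_8$, then in the fraction field of $S$ one would get $\varphi_4=Q_0(\varphi)/\varphi_0=Q_1(\varphi)/\varphi_1$ with $Q_i\in\CC[y_0,\dots,y_3]_8$, hence $y_1Q_0=y_0Q_1$ in the polynomial ring $\CC[y_0,\dots,y_3]\cong R$; unique factorization gives $y_0\mid Q_0$, so $\varphi_4\in R_7$, a contradiction. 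Hence at least one of $\varphi_0\varphi_4,\varphi_1\varphi_4$ lies outside $R_8$, and since $\dim_\CC(S_8/R_8)=1$ this yields a relation $\ell(\varphi_0,\varphi_1)\,\varphi_4=Q(\varphi_0,\dots,\varphi_3)$ with $\ell$ a nonzero linear form and $Q\in\CC[y_0,\dots,y_3]_8$. Put $F:=\ell(y_0,y_1)y_4-Q\in T:=\CC[y_0,\dots,y_4]$, a weighted-homogeneous polynomial of degree $8$ killing $(\varphi_0,\dots,\varphi_4)$. It is irreducible: $F$ has degree $1$ in $y_4$ with leading coefficient $\ell$, so it could only factor if $\ell\mid Q$; but $\ell\mid Q$ would give $\varphi_4=(Q/\ell)(\varphi)\in R$ (using that $S$ is a domain and $\ell(\varphi_0,\varphi_1)\ne 0$), again impossible.

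Then I would conclude as follows. Since $T$ is a UFD and $F$ is irreducible, $T/(F)$ is a domain of Krull dimension $4$, and as $F$ is a nonzerodivisor of degree $8$, $\HSer_{T/(F)}(t)=(1-t^8)\HSer_T(t)=\tfrac{1+t^4}{(1-t)^2(1-t^2)(1-t^7)}=\HSer_S(t)$. The homomorphism $y_i\mapsto\varphi_i$ factors as $T\twoheadrightarrow T/(F)\to R':=\CC[\varphi_0,\dots,\varphi_4]\subseteq S$; since $R\subseteq R'\subseteq S$ with $\dim R=4$ (a polynomial ring in four variables) and $\dim S=4$ (the Hilbert function of $S$ being a quasi-polynomial of degree $3$), $R'$ has Krull dimension $4$. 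A surjection of finitely generated $\CC$-algebras from the domain $T/(F)$ onto an algebra of the same dimension has trivial kernel (a nonzero ideal of a finitely generated $\CC$-domain cuts the dimension), so $T/(F)\cong R'$. Hence $\HSer_{R'}=\HSer_{T/(F)}=\HSer_S$, and because $R'\subseteq S$ this forces $R'=S$. Thus $\varphi_0,\dots,\varphi_4$ generate $S$, the ideal of relations is the principal ideal $(F)$ of weighted degree $8$ (degree $16$ in $k$), and $X=\JJJ/G\simeq\Proj S\cong\{F=0\}\subseteq\PP(1,1,2,4,7)$ is a weighted hypersurface of degree $8$.

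The computational core — and the real obstacle — is Lemma~\ref{Jac-non-0}: exhibiting concrete $G$-invariants $\varphi_0,\dots,\varphi_3$ (as $G$-averages of explicit theta functions of degrees $2,2,4,8$) and certifying their algebraic independence by evaluating a Jacobian, which is an \emph{ad hoc} search backed by a sizeable Macaulay2 computation. Granting that and the Hilbert function of Theorem~\ref{main}, the argument above is essentially formal; the one delicate point is that ``the five $\varphi_i$ generate $S$'' is \emph{not} read off from a degree bound, but obtained from the irreducibility of the degree-$8$ relation $F$ together with the equality $\HSer_{T/(F)}=\HSer_S$.
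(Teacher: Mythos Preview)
Your argument is correct and follows the same skeleton as the paper's proof: invoke Lemma~\ref{Jac-non-0} for four algebraically independent invariants, compare with the Hilbert series of Theorem~\ref{main} to force a fifth generator in degree~$7$, locate the unique degree-$8$ relation, and conclude that $S\cong T/(F)$. Your treatment is in fact more careful than the paper's on one point: the paper simply asserts that the equality $h_{S}(p)=h_{R}(p)-h_{R}(p-8)$ makes the natural map $R/(F_8)\to S$ an isomorphism, whereas you supply the missing justification (the UFD argument showing the relation genuinely involves $y_4$, the irreducibility of $F$, and the Krull-dimension/domain argument that forces $T/(F)\xrightarrow{\sim}R'=S$).
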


\begin{proof} It suffices to show that we can find $G$-invariant theta functions
$\varphi_0,\varphi_1,\varphi_2,\varphi_3$ of respective degrees $2,2,4,8$ such that the associated  fiber-homogeneous functions $\tilde\varphi_i$ on $\mathbf V(\LLL^{-1})$ are algebraically independent. This is done below in Lemma~\ref{Jac-non-0}. Then the functions
$\varphi_i$, $i=0,\ldots,3$, generate the free polynomial subalgebra $\SSS'$ in $\SSS= S(\LLL^2)^G$, and comparing the initial segments of their Hilbert functions,
$$
(h_{\SSS}(p))_{p\geq 0} = (1, 2, 4, 6, 10, 14, 20, 27, 36, 46, 58,\ldots),
$$
$$
(h_{\SSS'}(p))_{p\geq 0} = (1, 2, 4, 6, 10, 14, 20, 26, 35, 44, 56,\ldots),
$$
we see that the four $\varphi_i$ generate $\SSS$ in degrees less than $7$ and $h_{\SSS}(7)=h_{\SSS'}(7)+1$, so that one extra generator of degree $7$, say $\varphi_4$, is needed to generate $\SSS$ up to degree $7$. Furthermore, there 
are 35 monomials in the $\varphi_i$, $i=0,\ldots,3$, of degree $8$, and two more monomials $\varphi_0\varphi_4$,
$\varphi_1\varphi_4$ involving $\varphi_4$, and as $h_{\SSS}(7)=36$ is one less than the number of monomials of degree $8$, there is precisely one linear relation $F_8=0$ between the 37 monomials of degree $8$. 
One easily verifies that $h_{\SSS}(p)=h_R(p)-h_R(p-8)$ for all $p\in \ZZ$, where $R$ denotes a free polynomial algebra with generators of degrees $1,1,2,4,7$, so that the natural map $R/F_8R\to\SSS$ is an isomorphism.
Thus $X$ is isomorphic to a hypersurface $F_8=0$ of degree $8$ in the weighted projective space $\PP(1,1,2,4,7)=\Proj R$.
\end{proof}

For the algebraic independence of $\tilde\varphi_0,\ldots,\tilde\varphi_3$, it is necessary and sufficient that the Jacobian $J=J(\tilde\varphi_i)$ is not identically zero. We will present explicitly an \textit{ad hoc} example of $\varphi_i(v)$ lying in the images of the Reynolds operators $\pazocal R_G^{(k_i)}$ and satisfying this property; some motivation for the choice of the example is given in Remark~\ref{conj-gen}.  We will verify that $J(t_0,v_0)\neq 0$ at a specific point $(t_0,v_0)$ by computing the differentials $\ud\tilde\varphi_i(t_0,v_0)$ approximately as partial sums of their Fourier series, which converge very rapidly. We recall that the period matrix of $\JJJ$ is $Z=\tau B$, where $B$ is given by \eqref{B-matrix} and $\tau=\frac{3+i\sqrt7}{2}$, and we set $q=e^{2\pi i\tau}=-e^{-\pi\sqrt7}$; we also adopt the convention that $q^r=e^{2\pi i\tau r}$ for any $r\in\QQ$.

\begin{lemma}\label{Jac-non-0}
Let the vectors $\xi_i$ ($i=0,\ldots,3$) of $\ZZ^3$ be defined by
$$
\xi_0=0,\quad \xi_1=\frac12(0,0,1),\quad \xi_2=\frac14(1,1,0),\quad \xi_3=\frac18(2,1,1),
$$
the first three being equal to the vectors $\mu_i$ in \eqref{vertices} in  
Section~\ref{sec:various}, with the basis $(b_i)$ of $M$ used to identify $M$ with $\ZZ^3$, and $\xi_3=\frac78\mu_3$.
Define four $G$-invariant theta functions on $\JJJ$ by
$$
\varphi_0=\pazocal R_G^{(2)}(\theta_{\xi_0,2}),\quad \varphi_1=\pazocal R_G^{(2)}(\theta_{\xi_1,2}),\quad \varphi_2=\pazocal R_G^{(4)}(\theta_{\xi_2,4}),\quad \varphi_3=\pazocal R_G^{(8)}(\theta_{\xi_3,8}),
$$ 
where the operators $\pazocal R_G^{(k)}$ are defined in \eqref{Reynolds}.
Associate to the $\varphi_i$ the $\Lambda$-invariant functions $\tilde\varphi_i$ on $\CC\times V$, as in \eqref{tilde}, and denote by $J$ the Jacobian $J(\tilde\varphi_i)$. Set $(t_0,v_0)=\left(1,\left(\frac18,\frac1{16},\frac14\right)\right)$. Then $J(t_0,v_0)\neq 0$.

\end{lemma}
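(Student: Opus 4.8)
The plan is to turn the statement into the non-vanishing of one explicit $4\times 4$ complex determinant, and then to certify that non-vanishing by summing a few terms of rapidly convergent Fourier series. Since $\tilde\varphi_i$ is fibre-homogeneous of degree $k_i$ in the coordinate $t$ (see \eqref{tilde}), one has $\partial_t\tilde\varphi_i(t,v)=k_it^{k_i-1}\varphi_i(v)$ and $\partial_{v_j}\tilde\varphi_i(t,v)=t^{k_i}\partial_j\varphi_i(v)$; hence at $(t_0,v_0)=\bigl(1,(\tfrac18,\tfrac1{16},\tfrac14)\bigr)$ the Jacobian $J(t_0,v_0)$ is the determinant of the matrix whose $i$-th row ($i=0,\dots,3$) is $\bigl(k_i\varphi_i(v_0),\ \partial_1\varphi_i(v_0),\ \partial_2\varphi_i(v_0),\ \partial_3\varphi_i(v_0)\bigr)$. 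So it suffices to compute the twelve complex numbers $\varphi_i(v_0)$ and $\partial_j\varphi_i(v_0)$ and check that this determinant is nonzero; since these functions are holomorphic, $J(t_0,v_0)\neq 0$ forces $J\not\equiv 0$, which is exactly the algebraic independence of $\tilde\varphi_0,\dots,\tilde\varphi_3$.

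To evaluate those twelve numbers, first expand each invariant. By \eqref{Reynolds}, $\varphi_i=\pazocal R_G^{(k_i)}(\theta_{\xi_i,k_i})=\tfrac1{336}\sum_{g\in G}\sum_{m\in P_{k_i}}\bigl(U_g^{(k_i)}\bigr)_{m,\xi_i}\,\theta_{m,k_i}$, where the entries of the unitary matrices $U_g^{(k_i)}$ are the finite, explicit data furnished by Theorem~\ref{TTF} and Proposition~\ref{U-matrices} (only $k_i\in\{2,4,8\}$ occur). It then remains to evaluate $\theta_{m,k}(v_0)$ and its first partials, i.e. a classical theta function with period $Z=\tau B$, $\tau=\tfrac{3+i\sqrt7}{2}$, together with its gradient, at the (real) point corresponding to $v_0$. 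From the defining series, the $u$-th term of $\theta_k\thch{m}{0}$ and of each $\partial_{v_j}\theta_k\thch{m}{0}$ has modulus at most a polynomial in $u$ times $|q|^{\,\frac{k}{2}B[u+m]}$, where $q=e^{2\pi i\tau}=-e^{-\pi\sqrt7}$ has $|q|\approx 2.5\times10^{-4}$; since $B$ from \eqref{B-matrix} is positive definite, $B[u+m]\to\infty$ as $\|u\|_\infty\to\infty$, so these series converge geometrically. Truncating at $\|u\|_\infty\le N$ for a small $N$ computes every entry to arbitrary precision, the tail being dominated by an explicit, rapidly convergent majorant.

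Carrying out this finite computation — assembling the $\varphi_i$ from the matrices $U_g^{(k_i)}$, summing the relevant Fourier coefficients, and expanding the $4\times 4$ determinant — produces a value of $J(t_0,v_0)$ whose modulus comfortably exceeds the tail estimate, hence $J(t_0,v_0)\neq 0$. The difficulty here is not conceptual but one of bookkeeping: one must track consistently the scalar normalisations $\epsilon_g$ built into the $U_g^{(k)}$ (so that $\pazocal R_G^{(k_i)}$ genuinely produces $G$-invariants rather than eigenvectors up to phase), and one must upgrade the ``rapid convergence'' heuristic to an \emph{a posteriori} error bound that is rigorously smaller than the computed $|J(t_0,v_0)|$. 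Both are routine given that $B\succ 0$, and the entire computation can be performed and certified with the computer algebra system already used in Proposition~\ref{U-matrices}.
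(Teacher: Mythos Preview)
Your approach is essentially the same as the paper's: expand each $\varphi_i$ via the Reynolds matrices, compute the partial derivatives as truncations of rapidly convergent series in $q=e^{2\pi i\tau}$, and evaluate the resulting $4\times4$ determinant at $(t_0,v_0)$ to certified precision. The paper carries this out concretely, reporting $J(t_0,v_0)\approx 0.000064967853+0.000075028580\,i$ with all displayed digits correct; your outline is the same plan without the explicit numerics. Two small slips worth fixing: there are sixteen numbers to compute (four values $\varphi_i(v_0)$ and twelve partials $\partial_j\varphi_i(v_0)$), not twelve; and in your Reynolds expansion the matrix index should read $(U_g^{(k_i)})_{\xi_i,m}$ rather than $(U_g^{(k_i)})_{m,\xi_i}$, consistent with the transformation formula in Theorem~\ref{TTF}.
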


\begin{proof}
Let us choose  $P_k=\left\{\tfrac\nu k\ | \ \nu\in\{0,1,\ldots,k-1\}^3\right\}$ for a set of representatives of $\frac 1k\ZZ^3/\ZZ^3$. We have
$$
\theta_{\xi_i,k_i}=\sum_{u\in\ZZ^3+\xi_i}q^{\frac{k_i}2 B\left[u\right]}e^{2k_i\pi i\trans vu}, \quad
i=0,1,2,3,\ (k_i)=(2,2,4,8).
$$
We compute the $G$-averages of these four theta functions, using formula \eqref{Reynolds}. By Proposition~\ref{U-matrices} and Theorem~\ref{TTF}, 
the elements $u^{(g,k_i)}_{m', m}$ ($m',m\in P_{k_i}$) of the matrices $U_g^{(k_i)}$ belong to the
cyclotomic field $\QQ\left(e^{\frac{\pi i}{k_i}}\right)$. We introduce the matrices of the Reynolds operators on the theta functions of degree $k_i$:
$$
\pazocal R_G^{(k)}=\left(r^{(k)}_{m', m}\right)_{m',m\in P_k},\quad r^{(k)}_{m', m}=\frac{1}{336}\sum_{g\in G}u^{(g,k)}_{m', m}, \quad
k=2,4,8.
$$
The exact values of the elements of the matrices $U_g^{(k_i)}$, belonging to $\QQ\left(e^{\frac{\pi i}{8}}\right)$, and the resulting Reynolds matrices were computed with Macaulay2; see~\cite{M2}.
We have 
$$
\varphi_i=\sum_{m\in P_{k_i}}r^{(k_i)}_{\xi_i, m}\theta_{m,k_i}
=\sum_{u\in\frac1{k_i}\ZZ^3}r^{(k_i)}_{\xi_i, u}q^{\frac{k_i}2 B\left[u\right]}e^{2k_i\pi i\trans vu},$$
where $r^{(k_i)}_{\xi_i, u}$ is defined to be $r^{(k_i)}_{\xi_i, m}$ for the unique $m\in P_{k_i}$ such that
$u\equiv m\mod\ZZ^3$. Replacing further $u$ with $\frac1{k_i}u$  with $u$ running over $\ZZ^3$ and differentiating,
we obtain 
\begin{gather*}
\pd\tilde\varphi_i/\pd t=
2k_it^{k_i-1}\sum_{u\in\ZZ^3}q^{\frac1{2k_i} B\left[u\right]}r^{(k_i)}_{\xi_i, \frac1ku}e^{2\pi i\trans vu},\\
\pd\tilde\varphi_i/\pd v_j=
2\pi it^{k_i}\sum_{u\in\ZZ^3}q^{\frac1{2k_i} B\left[u\right]}r^{(k_i)}_{\xi_i, \frac1ku}u_je^{2\pi i\trans vu},\quad
j=1,2,3.
\end{gather*}
As $B$ is positive definite, these infinite sums of powers of $q$ contain only a finite number of summands $N_i(c)$ with exponent at most $c$ for any given constant $c$, $N_i(c)\sim\frac{4\pi}{3\sqrt{\det B}}(2k_ic)^{\frac32}$, and, since $|q|<1$, the sums converge uniformly on compact subsets of $\CC\times V$. The convergence is in fact very rapid. For example, in the case of the slowest convergence, when $i=3, k_i=8$, if we stop summation at the terms of order $q^{7/2}\approx -2.3211\,i\cdot 10^{-13}$, then $N_3(3.5)=3527$, and all the vectors $u\in\ZZ^3$ occurring in the truncated sum are in the block $|u_1|\leq 10,|u_2|\leq 10,|u_3|\leq 14$. Computing the determinant of the thus obtained approximate Jacobian matrix at the point $(t_0,v_0)$, we obtain $J(t_0,v_0)\approx 0.000064967853+0.000075028580\, i$, all the shown decimal digits being exact.
\end{proof}

\section{Degree 8 hypersurfaces in \texorpdfstring{$\boldsymbol{\PP(1,1,2,4,7)}$}{P(1,1,2,4,7)} with correct singularities}
\label{sec:degree-8 hypersurfaces} 
The goal of this section is to show that a hypersurface in $\PP(1,1,2,4,7)$ defined by a \hbox{degree 8} homogeneous polynomial whose singularities are those of $\PP(1,2,4,7)$ is actually isomorphic to $\PP(1,2,4,7)$.  To achieve this, we  classify the degree $8$ hypersurfaces in $\PP(1,1,2,4,7)$ whose singularities are those of $\PP(1,2,4,7)$.

Recall that $\PP(1,2,4,7)$ embeds in $\PP(1,1,2,4,7)$ as the degree $8$ hypersurface given by the equation 
$$F_8^0=y_3^2-y_0y_4.$$
Its singular locus is the union of two irreducible components, $\PP^1=\ell$ and an isolated point $p$.  The singularity at $p$ is of analytic type $\frac17(1,2,4)$.  Here, for a cyclic group $\mU_d$ of order $d$, we denote by $\frac1d(\nu_1,\nu_2,\nu_3)$ the (analytic equivalence class of the) cyclic quotient singularity $\CC^3/\mU_d$, where the generator $c_d$ of $\mU_d$ acts by $c_d \colon (z_1,z_2,z_3)\mapsto (\epsilon^{\nu_1}z_1,\epsilon^{\nu_2}z_2,\epsilon^{\nu_3}z_3)$,\ \ $\epsilon=\exp\left(\frac{2\pi i}d\right)$.  At all but one point of $\ell$, the singularity of $X$ is of type $\frac12(1,0,1)$, that is, $ \CC\times A_1$, the Cartesian product of $\CC$ with a surface singularity $\CC^2/\langle-1\rangle$ of type $A_1$.  The type of the singularity at $q$, the unique point of $\ell$ where the type of singularity changes, is $\frac14(1,2,3)$.

Let us describe in affine charts the hypersurface $\{F_8^0=0\}$ in $\PP(1,1,2,4,7)$.  Write $y_0,\ldots,y_4$ for the coordinates in $\PP(1,1,2,4,7)$ with weights
$$n_0=1, \quad n_1=1, \quad n_2=2, \quad n_3=4, \quad n_4=7.$$
Set $y_i=1$, and quotient $\CC^4$ with coordinates $(y_0,\ldots,\widehat{y_i},\ldots,y_4)$ by the action of the cyclic group $\mU_{n_i}$ with weights $(n_0,\ldots,\widehat{n_i},\ldots,n_4)$ to obtain the affine chart $U_i$ of $\PP(1,1,2,4,7)$.  Note that $U_0$ and $U_1$ are smooth charts isomorphic to $\CC^4$ since $n_0=n_1=1$.

The restriction of $F_8^0$ to $U_0$ is $\{y_3^2-y_4=0\}$.  The hypersurface $F_8^0|_{U_0}=0$ is a smooth hypersurface parameterized by $(y_1,y_2,y_3)$, isomorphic to $\CC^3$.

The restriction of $F_8^0$ to $U_1$ is $y_3^2-y_0 y_4$.  The singular locus of $F_8^0|_{U_1}=0$ is the line $y_0=y_3=y_4$ with type $ \CC\times A_1$.

The restriction of $F_8^0$ to $U_2$ is $\{y_3^2-y_0 y_4=0\}/\frac{1}{2}(1,1,0,1)$.  Let us consider the quotient $\CC^3/\frac{1}{2}(1,0,1)$, where the coordinates of $\CC^3$ are denoted by $(u,v,w)$.  One can identify this quotient with the quadric $y_0 y_4 - y_3^2=0$ in $\CC^4$ through the mapping $(u,v,w) \mapsto (y_0=u^2,y_1=v,y_3=uw, y_4=w^2)$.  From this we observe that $\{y_0 y_4 - y_3^2=0\}/\frac{1}{2}(1,1,0,1)$ is the same as $\CC^3/\frac{1}{4}(1,2,3)$.  As a result, the restriction of $F_8^0$ to $U_2$ is isomorphic to $\CC^3/\frac{1}{4}(1,2,3)$.  Note that this is the affine chart $x_2=1$ in $\PP(1,2,4,7)$ with coordinates $(x_0,x_1,x_2,x_3)$.

The restriction of $F_8^0$ to $U_3$ is $\{y_0 y_4=1\}/\frac{1}{4}(1,1,2,3)$.  This chart is contained in $U_0 \cup U_4$.

The restriction of $F_8^0$ to $U_4$ is $\{y_3^2-y_0=0\}/\frac{1}{7}(1,1,2,4)$, which is isomorphic to $\CC^3/\frac{1}{7}(1,2,4)$, that is, the affine chart $x_3=1$ in $\PP(1,2,4,7)$.

Let us now consider a hypersurface $X$ in $\PP(1,1,2,4,7)$ defined by any degree $8$ homogeneous equation $F_8=0$ such that the singularities of $X$ are those of $\PP(1,2,4,7)$.

\begin{lemma} 
\label{lem:nonzero coefficients}
Write 
\begin{align*}
  F_8
=\; & 
y_4 \varphi_1(y_0,y_1) 
+ c_0 y_3^2 
+ c_1 y_2^2 y_3 
+c_2 y_2^4 
+ y_2 y_3 \varphi_2(y_0,y_1) 
+ y_2^3 \psi_2(y_0,y_1) &\\
&  
+ y_3 \varphi_4(y_0,y_1) 
+ y_2^2 \psi_4(y_0,y_1) 
+ y_2 \varphi_6 (y_0, y_1) 
+ \varphi_8(y_0,y_1),
\end{align*}
with $c_j \in \CC$ and $\varphi_i, \psi_i$ homogeneous polynomials of degree $i$.  Then both $c_0$ and $\varphi_1$ are non-zero.
\end{lemma}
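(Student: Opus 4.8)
The idea is to recover $\varphi_1$ and $c_0$ from the local structure of $X$ at the two coordinate points $p_4=[0{:}0{:}0{:}0{:}1]$ and $p_3=[0{:}0{:}0{:}1{:}0]$ of $\PP(1,1,2,4,7)$, which are exactly the points carrying the cyclic quotient singularities $\tfrac17(1,1,2,4)$ and $\tfrac14(1,1,2,3)$ of the ambient space. Among the degree-$8$ monomials in $y_0,\dots,y_4$, none is a power of $y_4$ alone, while the only one that is a power of $y_3$ alone is $y_3^2$; hence $F_8(p_4)=0$ for every $X$, whereas $F_8(p_3)=c_0$, so $p_3\in X$ if and only if $c_0=0$. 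It therefore suffices to show that if $\varphi_1=0$ the germ of $X$ at $p_4$ is not among the germs occurring on $\PP(1,2,4,7)$, and --- assuming $c_0=0$, so that $p_3\in X$ --- the analogous statement at $p_3$.

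Consider $\varphi_1$ first. The only monomials of $F_8$ involving $y_4$ are $y_0y_4$ and $y_1y_4$, and these are also the only ones linear in $(y_0,y_1,y_2,y_3)$; so if $\varphi_1=0$ then, in the chart $U_4\simeq\CC^4/\mU_7$ with weights $(1,1,2,4)$ on $(y_0,y_1,y_2,y_3)$, the affine equation $f:=F_8(y_0,y_1,y_2,y_3,1)$ lies in $(y_0,y_1,y_2,y_3)^2$, so the affine cone $\widehat X:=\{f=0\}\subset\CC^4$ is singular at the origin, whose image is $p_4$. On the other hand, since the singularities of $X$ are the (normal) quotient singularities of $\PP(1,2,4,7)$, $X$ is normal; and since $\mU_7$ has prime order and acts on $\CC^4$ with weights coprime to $7$, it acts on $\widehat X$ freely off the single point $0$, so $\widehat X\to X\cap U_4$ is \'etale in codimension one, $\widehat X$ is normal (hence analytically irreducible at $0$), and the resulting \'etale $\mU_7$-cover of the smooth locus of $X\cap U_4$ near $p_4$ shows that $\pi_1^{\mathrm{loc}}(X,p_4)$ surjects onto $\ZZ/7$. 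But the germs occurring on $\PP(1,2,4,7)$ --- the smooth germ, $\CC\times A_1$, $\CC^3/\tfrac14(1,2,3)$ and $\CC^3/\tfrac17(1,2,4)$ --- have local fundamental groups $1,\ZZ/2,\ZZ/4,\ZZ/7$ respectively, and only $\tfrac17(1,2,4)$ admits a surjection onto $\ZZ/7$, which is then an isomorphism. Hence the germ $(X,p_4)\simeq\CC^3/\tfrac17(1,2,4)$, and $\widehat X$, being its \'etale-in-codimension-one $\mU_7$-cover realising $\pi_1^{\mathrm{loc}}=\ZZ/7$, must be the cover attached to the trivial subgroup, i.e. the smooth germ $\CC^3$. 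This contradicts the singularity of $\widehat X$ at $0$, so $\varphi_1\neq0$.

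The argument for $c_0$ runs in parallel at $p_3$ in the chart $U_3\simeq\CC^4/\mU_4$ with weights $(1,1,2,7)$ on $(y_0,y_1,y_2,y_4)$. No degree-$8$ monomial is linear in $(y_0,y_1,y_2,y_4)$, so if $c_0=0$ then $f_3:=F_8(y_0,y_1,y_2,1,y_4)$ has neither constant nor linear term and $\widehat X_3:=\{f_3=0\}$ is singular at the origin, which maps to $p_3$. Here $\mU_4$ acts on $\CC^4$ freely off $\{0\}$ and the $y_2$-axis $\{y_0=y_1=y_4=0\}$, the latter fixed by the order-$2$ element, and $\widehat X_3\cap\{y_0=y_1=y_4=0\}=\{c_1y_2^2+c_2y_2^4=0\}$. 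If $(c_1,c_2)\neq(0,0)$, this set is finite, the \'etale $\mU_4$-cover of the smooth locus again gives $\pi_1^{\mathrm{loc}}(X,p_3)\twoheadrightarrow\ZZ/4$, and since only $\tfrac14(1,2,3)$ among the four germs has this property, $(X,p_3)\simeq\CC^3/\tfrac14(1,2,3)$ and $\widehat X_3\simeq\CC^3$ is smooth --- a contradiction. If instead $c_0=c_1=c_2=0$, then $F_8$ vanishes identically on the line $C=\{y_0=y_1=y_4=0\}\simeq\PP^1$ (all the $\varphi_i,\psi_i$ vanish there), so $C\subset X$; a computation of the $2$-jet of $F_8$ transverse to $C$, using $\varphi_1\neq0$, shows that this $2$-jet has rank $3$ at a general point of $C$, i.e. cuts out an $A_1$ surface, so --- $C$ lying in the $\tfrac12(1,1,1)$-locus of the ambient space --- the transverse type of $X$ along $C$ is $A_1/\tfrac12(1,1,1)\simeq\CC^2/\tfrac14(1,1)$, with local fundamental group $\ZZ/4$; this never occurs on $\PP(1,2,4,7)$, where singular curves have transverse type $A_1$ (local fundamental group $\ZZ/2$), and a more degenerate $2$-jet makes $X$ even worse along $C$. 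Hence $c_0\neq0$.

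The count of monomials and the passage to affine charts are routine. The load-bearing steps --- and the main obstacle --- are: (i) checking that $\widehat X$ and $\widehat X_3$ are normal, so that the cyclic covers can be read off as quotients of the local fundamental group; (ii) the comparison of local fundamental groups of the quotient singularities, which pins down each germ and forces the relevant cone to be smooth; and (iii) the degenerate case $c_0=c_1=c_2=0$, where the cyclic action fails to be free in codimension one and one argues instead via the transverse singularity of $X$ along the line $C$ it contains.
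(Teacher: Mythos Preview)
Your approach via local fundamental groups is genuinely different from the paper's and, for $\varphi_1$, cleaner. The paper argues that if $\varphi_1=0$ then the affine cone $\widehat X\subset\CC^4$ must be smooth off its vertex (by an embedding-dimension chase locating the $\tfrac17(1,2,4)$ point) and then invokes Reid's canonicity criterion to show the cone singularity is non-canonical, contradicting the canonicity of $\tfrac17(1,2,4)$. Your argument---the free $\mU_7$-action forces $\pi_1^{\mathrm{loc}}(X,p_4)\twoheadrightarrow\ZZ/7$, hence the germ is $\tfrac17(1,2,4)$, hence $\widehat X$ is its universal $\mU_7$-cover $\CC^3$, contradicting singularity---is shorter and avoids the birational input.

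For $c_0$ the comparison goes the other way. The paper computes that the Poincar\'e residue generating $\omega_{\widehat X_3}$ has weight $-1$ under $\mU_4$, so $(X,p_3)$ has Gorenstein index $4$; since the indices occurring on $\PP(1,2,4,7)$ are $1,1,2$, this is a one-line contradiction with no case split. Your route needs the dichotomy on $(c_1,c_2)$ because $\mU_4$ does not act freely in codimension one on $\CC^4$. The case $(c_1,c_2)\neq(0,0)$ is fine. In the case $c_0=c_1=c_2=0$, however, your assertion that the transverse $2$-jet has rank $3$ at a general point of $C$ is not a consequence of $\varphi_1\neq0$ alone: the $2$-jet in $(y_0,y_1,y_4)$ is $y_4\varphi_1+y_3\varphi_2+\psi_2$, and its determinant is a nonzero multiple of $(y_3\varphi_2+\psi_2)$ evaluated on the line $\{\varphi_1=0\}$, which vanishes identically when $\varphi_1\mid\varphi_2$ and $\varphi_1\mid\psi_2$. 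Your fallback ``a more degenerate $2$-jet makes $X$ even worse'' is correct in spirit but should be made precise: either the transverse hypersurface germ of $\widehat X_2$ is non-isolated, forcing $\dim\operatorname{Sing}X\geq2$, or it is an isolated normal surface singularity and then, by Mumford's criterion, has nontrivial local $\pi_1$; the free $\mU_2$-quotient then has local $\pi_1$ of order $\geq4$, again excluding transverse type $A_1$. With that said, your argument goes through.
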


\begin{proof}
First assume that $c_0=0$.  Then $Q_3=(0,0,0,1,0)$ is in $X$.  It is the origin of the chart $U_3=\CC^4/{\frac{1}{4}(1,1,2,3)}$ of $\PP(1,1,2,4,7)$.  Denote by $\tilde X_3 \subseteq \CC^4$ the hypersurface defined by $\{F_8|_{y_3=1}=0\}$, and let $\tilde Q_3 \in \tilde X_3$ be such that the image of $\tilde Q_3$ in the quotient $\CC^4/{\mU_4}$ is $Q_3$.  The germ of the canonical sheaf $\omega_{\tilde X_3,\tilde Q_3}$ is generated by
$$\sigma = {\rm res}_{\tilde X_3}\left( \dfrac{d y_0 \wedge d y_1 
\wedge d y_2 \wedge d y_4}{F_8|_{y_3=1}} \right),$$
of weight $-1$ for the $\mU_4$-action.  
So the Gorenstein index of $\tilde X_3/{\frac{1}{4}(1,1,2,3)} = X|_{U_3}$ 
in $Q_3$ is~$4$. 
But $X$ has no singularity of index $4$. 
We conclude that $c_0$ is non-zero.

Now assume that $\varphi_1=0$.  Set $X_4=X|_{U_4}$. We have $X_4 = \tilde X_4/\frac{1}{7}(1,1,2,4)$, where $\tilde X_4 \subset \CC^4$ is the hypersurface defined by the equation $\{F_8|_{y_4=1}=0\}$.  Under our assumption, $F_8$ does not contain $y_4$, so $F_8|_{y_4=1}=F_8$ defines the affine cone of an octic surface in $\PP(1,1,2,4)$ with coordinates $y_0,y_1,y_2,y_3$. The cone has to be smooth outside of its vertex $O$, the origin of the affine space $\CC^4$ with the same coordinates.  Indeed, assume the contrary. Then $O$ is a non-isolated singularity, so the image $Q_4$ of $O$ in $X_4$ also is a non-isolated singularity. Hence the unique isolated cyclic quotient singularity $p\in X$ of type $\frac17(1,2,4)$ is different from $Q_4$. Moreover, $p\in X$ is not a hypersurface singularity, but all the singularities of $X\setminus Q_4$ are hypersurface, so $p$ cannot be located in the chart $U_4$. Then, looking at the singularities of $X$ in the other charts, we see that all of them are hypersurface ones, except possibly points on the axis $(y_2,y_3)$. By proving that $c_0\neq 0$, we have excluded the possibility that $Q_3$ belongs to $X$, so the only non-hypersurface singularity that may occur in a chart $U_i$ with $i\neq 4$ is of the type (hypersurface in $\CC^4$)$/\frac12(1,1,0,1)$. The embedding dimension of such a singularity is at most that of $\CC^4/\frac12(1,1,0,1)$, which is equal to $7$, but the embedding dimension of $p$ is $12$. Hence no point of a chart $U_i$ for $i\neq 4$ can fit the role of $p$, so this case is impossible, and the singularity of $\tilde X_4$ at $O$ is an isolated quasi-homogeneous singularity.

It remains to see that under this assumption, the quotients $\{F_8=0\}/\frac{1}{7}(1,1,2,4)$ and $\CC^3/\frac{1}{7}(1,2,4)$ cannot be isomorphic. This follows from the fact that the first one is non-canonical, while the second one is canonical. Indeed, $\tilde X_4=\{F_8=0\}$ is non-canonical by Reid's criterion of canonicity (\textit{cf.} \cite[Theorem 4.1]{Reid}) with monomial valuation $\alpha$ defined by $(\alpha(y_i))=(\frac18,\frac18,\frac14,\frac12)$;
this implies that $X_4=\tilde X_4/\mu_7$ is non-canonical (\textit{cf.}~\cite[Proposition 1.7]{Reid}).
The canonicity of the second singularity follows from 
\cite[Theorem 3.1 and Remark 3.2]{Reid}. 
We conclude that $\varphi_1$ is non-zero.
\end{proof}

By Lemma~\ref{lem:nonzero coefficients}, 
one can assume that $c_0\neq 1$ and use the change 
of coordinates 
$$y_3 \longmapsto d_0 y_3 + d_1 y_2^2 + y_2 f_2(y_0,y_1) 
+ f_4(y_0,y_1),$$
with $d_j \in \CC$, $d_0 \in \CC^*$ 
and the $f_i$ homogeneous polynomials of degree $i$, to normalize the value of $c_0$ and to kill
the terms containing $y_3$ to the power of $1$.
We obtain for $F_8$ an expression as in the lemma, but with the constraints
$$c_0=-1,\quad c_1=0,\quad \varphi_2=0,\quad 
\varphi_4 = 0.$$ 

Next, again by the lemma, $\varphi_1\neq 0$, and we can change the variables $y_0, y_1$ in such a way that in new variables, we have $\varphi_1=y_0$, so that there is no monomial $y_1 y_4$ in the expression for $F_8$.  Then, we use the change of coordinates
$$y_4 \longmapsto y_4 + y_2^3 g_1(y_0,y_1) 
+ y_2^2 g_3(y_0,y_1) 
+ y_2 g_5(y_0,y_1) 
+ g_7(y_0,y_1),$$
with $g_i$ homogeneous of degree $i$, to kill all the monomials divisible by $y_0$.  As a result, we obtain an expression of the form
\begin{equation}\label{versal}
F_8 =  y_0 y_4 - y_3^2 
+ a_1 y_2^4 
+ a_2y_2^3  y_1^2 
+ a_3 y_2^2 y_1^4 
+ a_4 y_2 y_1^6 
+ a_5 y_1^8, \quad a_i \in \CC.
\end{equation}
Set $\nu = \min \{i \colon a_i \not=0\}$. 
Finally, using changes of coordinates 
of the form 
$y_1\mapsto \kappa y_1$, $y_2 \mapsto \lambda y_2 + \mu y_1^2$, 
$\kappa,\lambda \in \CC^*$, $\mu \in \CC$, 
we can reduce $F_8$ to one of the following normal forms:
\begin{equation}\label{nf}
\begin{array}{lllll}
&\nu =\infty\colon &  &F_8 = y_0 y_4 - y_3^2, &\\
&\nu =5 \colon &  &F_8 = y_0 y_4 - y_3^2 + y_1^8, &\\
&\nu =4 \colon &  &F_8 = y_0 y_4 - y_3^2 + y_2 y_1^6 , &\\
&\nu =3 \colon &  &F_8 = y_0 y_4 - y_3^2 + (y_2^2+a_5 y_1^4) y_1^4,& \\
&\nu =2 \colon &  &F_8 = y_0 y_4 - y_3^2 + (y_2^3+ 
a_4 y_2 y_1^4 + a_5 y_1^6)y_1^2, &\\
&\nu =1 \colon &  &F_8 = y_0 y_4 - y_3^2 
+ y_2^4+ (a_3 y_2^2  
+ a_4 y_2 y_1^2 + a_6 y_1^4) y_1^4. &
\end{array}
\end{equation}

\begin{proposition}
\label{pro:normal forms}
Except for the case $\nu=\infty$, the degree $8$ hypersurface $X$ defined by $F_8=0$ in $\PP(1,1,2,4,7)$, with $F_8$ one of the above normal forms, has a singularity of a type different from the types of singularities of $\PP(1,2,4,7)$.
\end{proposition}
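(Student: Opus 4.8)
The plan is to sweep the five affine charts $U_0,\dots,U_4$ of $\PP(1,1,2,4,7)$ and show, for each normal form \eqref{nf} with $\nu$ finite, that $\Sing X$ is a \emph{finite} set which contains at least one point where $X$ has an \emph{isolated} singularity of \emph{Gorenstein index $2$}. By the description recalled just before Lemma~\ref{lem:nonzero coefficients}, the singular locus of $\PP(1,2,4,7)$ is the union of the curve $\ell$, along which the type is $\tfrac12(1,0,1)$ generically and $\tfrac14(1,2,3)$ at the one point $q$, and the isolated point $p$ of type $\tfrac17(1,2,4)$. As germs, $\tfrac12(1,0,1)$ and $\tfrac14(1,2,3)$ have $1$-dimensional singular locus, while $\tfrac17(1,2,4)$ is Gorenstein (since $1+2+4\equiv0\bmod7$). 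Hence $\PP(1,2,4,7)$ carries no isolated singularity of index $>1$, and exhibiting one on $X$ proves the proposition; this is in the same vein as the use of ``no singularity of index $4$'' in Lemma~\ref{lem:nonzero coefficients}.

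\textbf{The chart sweep.} Write $F_8=y_0y_4-y_3^2+G(y_1,y_2)$ as in \eqref{versal}, with $G$ weighted-homogeneous of degree $8$ ($\deg y_1=1$, $\deg y_2=2$), and $\nu=\min\{i:a_i\neq0\}$. On $U_0$ the equation is the graph $y_4=y_3^2-G$, so $X\cap U_0$ is smooth; on $U_4$, eliminating $y_0$ gives $\tilde X_4\cong\CC^3_{y_1,y_2,y_3}$ and $X|_{U_4}\cong\CC^3/\tfrac17(1,2,4)$, smooth away from $p$. On $U_1$ (a smooth chart) the hypersurface $\{y_0y_4-y_3^2+G(1,y_2)=0\}$ is singular exactly along $\{y_0=y_3=y_4=0\}$ at the multiple roots of $y_2\mapsto G(1,y_2)$, a finite set. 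On $U_3$ one has $F_8|_{y_3=1}=y_0y_4-1+G$, whose gradient $(y_4,\partial_{y_1}G,\partial_{y_2}G,y_0)$ can vanish only where $y_0=y_4=0$ and $\partial_{y_1}G=\partial_{y_2}G=0$, which by Euler's relation forces $G=0$, whence $F_8|_{y_3=1}=-1\neq0$; thus $\tilde X_3$ is \emph{smooth}, and $X|_{U_3}$ is singular only at points with non-trivial $\mU_4$-isotropy. On $U_2$ the equation is $F_8|_{y_2=1}=y_0y_4-y_3^2+h(y_1)$ with $h(y_1)=G(y_1,1)$ a polynomial in $y_1^2$; its critical locus is again finite, and the fixed locus of the $\mU_2$-action $\tfrac12(1,1,0,1)$, namely the $y_3$-axis, meets $\tilde X_2$ only where $y_3^2=h(0)$. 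Altogether $\Sing X$ is finite.

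\textbf{The bad point.} If $\nu\geq2$ then $a_1=0$, so $h(0)=0$ and $\operatorname{mult}_0h=2\nu-2\geq2$; hence $q'=[0:0:1:0:0]\in X$ is the origin of $U_2$, the affine hypersurface $\tilde X_2=\{y_0y_4-y_3^2+h(y_1)=0\}$ has an isolated singularity there (its gradient vanishes only at $0$ near $0$), and $\mU_2$ acts freely on $\tilde X_2\setminus\{0\}$ (the fixed $y_3$-axis meets $\tilde X_2$ only at $0$). So $X|_{U_2}=\tilde X_2/\mU_2$ has an isolated singularity at $q'$; it has Gorenstein index $2$ because the generator $\sigma=\operatorname{res}_{\tilde X_2}\bigl(dy_0\wedge dy_1\wedge dy_3\wedge dy_4/(F_8|_{y_2=1})\bigr)$ of $\omega_{\tilde X_2}$ has $\mU_2$-weight $1+1+0+1\equiv1$ (while $F_8|_{y_2=1}$ has weight $0$, as $h$ is even), so $\sigma$ is not invariant and $\omega_{X|_{U_2}}$ is not invertible at $q'$, whereas $\sigma^{\otimes2}$ is invariant and $\omega^{[2]}_{X|_{U_2}}$ is invertible. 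If $\nu=1$ then $a_1\neq0$ and $q'\notin X$; instead look in $U_3$, where the order-$2$ element of $\mU_4$ (acting by $(-1,-1,1,-1)$ on $(y_0,y_1,y_2,y_4)$) fixes the points $(0,0,y_2^\ast,0)\in\tilde X_3$ with $a_1(y_2^\ast)^4=1$; at such a point $\tilde X_3$ is smooth with tangent space $\langle\partial_{y_0},\partial_{y_1},\partial_{y_4}\rangle$, the full stabilizer is this $\mU_2$, and it acts by $-\mathrm{id}$, so $X$ has there a $\tfrac12(1,1,1)$ singularity, which is isolated and of index $2$ ($1+1+1\not\equiv0\bmod2$). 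Either way the required point is produced.

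\textbf{Main obstacle.} The real work is the local analysis in $U_2$ and $U_3$: verifying that the affine hypersurfaces are as described (notably the Euler argument for smoothness of $\tilde X_3$), tracking the $\mU_{n_i}$-actions and their fixed loci, and the $\mU_2$-weight bookkeeping for the index. One subtlety worth a sentence is that $X|_{U_2}$ is \emph{genuinely} singular at $q'$: since $\tilde X_2$ is singular at $0$ while $\mU_2$ acts freely on $\tilde X_2\setminus\{0\}$, a smooth quotient would make $\tilde X_2\setminus\{0\}$ a connected \'etale double cover of the simply connected $\CC^3\setminus\{0\}$, which is impossible (one may also simply note that $F_8|_{y_2=1}$ has no linear part at $0$). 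Finally, the argument is uniform in the residual parameters $a_i$ of the normal forms, since everything used --- $h(0)=0$, $\operatorname{mult}_0h=2\nu-2$, $h$ even, the freeness of the $\mU_2$-action, and the $\mU_2$-weights of $\sigma$ and $F_8|_{y_2=1}$ --- depends only on $\nu$.
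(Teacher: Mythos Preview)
Your proof is correct and follows essentially the same line as the paper's: produce, for each finite $\nu$, an isolated singular point of Gorenstein index~$2$ on $X$, and observe that no germ on $\PP(1,2,4,7)$ has this property (the points of $\ell$ are non-isolated, and $\tfrac17(1,2,4)$ is Gorenstein). For $\nu\ge 2$ you and the paper look at the same point, the origin of $U_2$; for $\nu=1$ the paper stays in $U_2$, first showing $\tilde X_2$ is smooth by excluding double roots of $a_5t^4+a_4t^3+a_3t^2+1$ via the chart $U_1$, and then picks up the two fixed points of $\mU_2$ on the $y_3$-axis, while you locate the \emph{same} $\tfrac12(1,1,1)$ points in $U_3$ using the Euler relation to see that $\tilde X_3$ is smooth, which is a little slicker. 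Your preliminary ``chart sweep'' showing $\Sing X$ is finite is correct but not actually needed for the conclusion: once you know the germ $(X,q')$ is an isolated singularity, you are done.
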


\begin{proof}
Let $\nu=5$. 
Then $X$ passes through the origin of the chart $U_2$. We have
$U_2= \CC^4/\frac{1}{2}(1,1,0,1)$, and $X_2:=X\cap U_2$ is the quotient $\tilde X_2/\frac{1}{2}(1,1,0,1)$,
where $\tilde X_2 = \{y_0 y_4 - y_3^2 + y_1^8=0\} \subset \CC^4$.
Since $y_0 y_4 - y_3^2 + y_1^8$ is $\mU_2$-invariant and the differential 
$dy_0 \wedge d y_1 \wedge d y_3 \wedge d y_4$ is not, 
the generator 
$${\rm res}_{\tilde X_2}\left( \dfrac{d y_0 \wedge d y_1 
\wedge d y_3 \wedge d y_4}{y_0 y_4 - y_3^2 + y_1^8} \right)$$
of $\omega_{\tilde X_2,O}$, 
where $O$ is the origin of $\CC^4$, 
is anti-invariant under $\mU_2$. 
So $Q_2$, the image of $O$ in $U_2$, is a singular point of Gorenstein 
index $2$. 
But in $\PP(1,2,4,7)$, the only point of Gorenstein 
index $2$ 
is the origin of the chart $x_2=1$, with singularity $\frac{1}{4}(1,2,3)$. 
The latter singularity is non-isolated: the whole image of the coordinate axis with weight $2$ consists of singular points of the quotient. Hence it cannot be equivalent to the {\em isolated} singularity
$\{y_0 y_4 - y_3^2+y_1^8=0\}/{\tiny \frac{1}{2}(1,1,0,1)}$.
Thus the case $\nu=5$ is impossible.

The cases $2\leq \nu\leq 5$
are all treated in the same way: in all of them, we find an isolated singularity of Gorenstein index $2$ at the origin of the chart $U_2$, which is impossible.

\smallskip

Let us now look at the case $\nu=1$.  In the chart $U_2$, we have
$$X\cap {U_2} = \tilde X_2/\tfrac{1}{2}(1,1,0,1),$$
where $\tilde X_2 \subset \CC^4$  
is defined by the equation 
$\tilde F=0$ with $\tilde F=y_0 y_4 - y_3^2 + 1+a_3y_1^4+
a_4 y_1^6 + a_5 y_1^8$. 
The singular locus of $\tilde X_2$ 
is given by $y_0 = y_3 =y_4=0$, 
$y_1^2=\gamma$, where $\gamma$ is a double root of 
$a_5 t^4 +a_4 t^3 + a_3 t^2 +1 =0$. 
But this polynomial cannot have double roots. 
Indeed, otherwise $X$ would have singular points 
in the chart $U_1$, which are of the type of isolated hypersurface singularities:
$$X\cap U_1=\{y_0 y_4 - y_3^2 + y_2^4+ a_3y_2^2+
a_4 y_2  + a_5 =0\} \subset \CC^4.$$
This contradicts the fact that the only isolated Gorenstein singularity of $\PP(1,2,4,7)$ is of type $\frac{1}{7}(1,2,4)$, and this is not a hypersurface singularity.

Therefore, $\tilde X_2$ is smooth, and the singularities of $\tilde X_2/\mU_2 = X\cap {U_2}$ can only occur in a subset of the fixed locus of $\mU_2$. Thus $\Sing(X)\cap U_2$ is the image of the set
$$\{y_0=y_1=y_4=0\} \cap \tilde X_2 = \{y_0=y_1=y_4=-y_3^2+1=0\}.$$
Hence $X$ has two isolated singular points of type $\frac{1}{2}(1,1,1)$, which are $(0,0,\pm 1,0)$.  But this is impossible.
\end{proof}

This brings us to the main result of the paper. 

\begin{theorem}\label{main3}
The quotient $X=\JJJ/G$ of the Jacobian $\JJJ$ of the Klein quartic $C$ by its full automorphism group $G$ is isomorphic to the weighted projective space $\PP(1,2,4,7)$.
\end{theorem}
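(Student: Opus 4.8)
The plan is to assemble the ingredients already established. By Theorem~\ref{main2}, the quotient $X=\JJJ/G$ is isomorphic to a hypersurface $\{F_8=0\}$ of weighted degree $8$ in $\PP(1,1,2,4,7)$; and by \cite[Theorem~4.3]{G336-I} the singular locus of $X$ is exactly that of $\PP(1,2,4,7)$, namely a rational curve $\ell\cong\PP^1$ along which the generic transverse type is $\frac12(1,0,1)=\CC\times A_1$, with one special point $q\in\ell$ of type $\frac14(1,2,3)$, together with one isolated point $p$ of type $\frac17(1,2,4)$. So the theorem reduces to the purely projective statement that any degree~$8$ hypersurface in $\PP(1,1,2,4,7)$ carrying precisely this package of singularities is equivalent, under a coordinate change of $\PP(1,1,2,4,7)$, to $\{F_8^0=0\}$ with $F_8^0=y_3^2-y_0y_4$; indeed that hypersurface was identified chart by chart with the standard embedding of $\PP(1,2,4,7)$ at the beginning of Section~\ref{sec:degree-8 hypersurfaces}.

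First I would invoke Lemma~\ref{lem:nonzero coefficients}: for such an $X$ both the coefficient $c_0$ of $y_3^2$ and the linear form $\varphi_1$ multiplying $y_4$ are non-zero. This is exactly what legitimizes the chain of coordinate changes performed after that lemma: rescaling $y_3$ and clearing the terms linear in $y_3$; normalizing $\varphi_1$ to $y_0$ and clearing every monomial divisible by $y_0$ to reach the one-parameter form \eqref{versal}; and finally a scaling of $y_1$ together with an affine substitution $y_2\mapsto\lambda y_2+\mu y_1^2$ to arrive at one of the six normal forms in \eqref{nf}, indexed by $\nu=\min\{i:a_i\neq 0\}\in\{1,2,3,4,5,\infty\}$. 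Since each of these substitutions is an automorphism of $\PP(1,1,2,4,7)$, it preserves the isomorphism type of $X$ together with all of its singularities.

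Next I would apply Proposition~\ref{pro:normal forms}: for every finite value $\nu\in\{1,2,3,4,5\}$ the corresponding hypersurface acquires a singularity absent from $\PP(1,2,4,7)$ — for $2\le\nu\le 5$ an \emph{isolated} point of Gorenstein index $2$ at the origin of the chart $U_2=\CC^4/\mU_2$ (whereas on $\PP(1,2,4,7)$ the only index-$2$ singularity sits on an entire curve of singular points), and for $\nu=1$ a pair of isolated $\frac12(1,1,1)$ points, which likewise do not occur. Because the singularities of $X$ coincide with those of $\PP(1,2,4,7)$, all finite $\nu$ are ruled out, so $\nu=\infty$; that is, $F_8$ is equivalent to $F_8^0=y_0y_4-y_3^2$ by an automorphism of $\PP(1,1,2,4,7)$. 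Hence $X\cong\{F_8^0=0\}$, and by the explicit identification of $\{F_8^0=0\}$ with $\PP(1,2,4,7)$ recorded before Lemma~\ref{lem:nonzero coefficients}, we conclude $X\cong\PP(1,2,4,7)$.

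By the time one reaches this statement the substantive difficulties — the Hilbert-function computation of Theorem~\ref{main}, the \textit{ad hoc} choice of algebraically independent invariants in Lemma~\ref{Jac-non-0}, and the passage to the weighted-hypersurface model in Theorem~\ref{main2} — have already been overcome, so the present proof is a short synthesis rather than a new argument. The only point that requires care is that the invariants used to separate the normal forms in Proposition~\ref{pro:normal forms} (the Gorenstein index, read off from the weight of the residue generator of the dualizing sheaf, and whether the index-$2$ or $\frac12(1,1,1)$ locus is isolated, read off from the fixed loci of the $\mU_{n_i}$-actions) are genuine analytic-isomorphism invariants which actually distinguish those forms from the singularity data of $\PP(1,2,4,7)$; the chart-by-chart setup of Section~\ref{sec:degree-8 hypersurfaces} is precisely what makes this comparison rigorous.
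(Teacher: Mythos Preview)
Your proposal is correct and follows exactly the paper's own approach: the paper's proof of Theorem~\ref{main3} is the single sentence ``This is an obvious consequence of Theorem~\ref{main2}, Lemma~\ref{lem:nonzero coefficients}, Proposition~\ref{pro:normal forms} and~\cite[Theorem 4.3]{G336-I},'' and you have simply unpacked how these four ingredients combine. Your elaboration on why the finite values of $\nu$ are excluded and on the role of the chart-by-chart identification of $\{F_8^0=0\}$ with $\PP(1,2,4,7)$ is accurate and faithful to the surrounding text.
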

 
 \begin{proof}
This is an obvious consequence of Theorem~\ref{main2}, Lemma~\ref{lem:nonzero coefficients}, Proposition~\ref{pro:normal forms} and~\cite[Theorem 4.3]{G336-I}.
\end{proof}

\begin{remark}\label{smoothing}
By arguments similar to those used in the reduction of $F_8$ to normal forms \eqref{nf}, one can easily verify that in the $45$-dimensional group of coordinate changes in $\PP(1,1,2,4,7)$, the stabilizer of the octic form $F_8^0=y_0y_4-y_3^2$ is $13$-dimensional and the connected component of the identity in it is generated by the changes of the form
$$
y_0\mapsto \lambda_0 y_0,\ \ y_4\mapsto \lambda_0^{-1} y_4,
y_1\mapsto \mu y_0+\lambda_1 y_1, \ \ 
y_2\mapsto \lambda_2 y_2+h_2(y_0,y_1), \ \
$$
$$
y_3\mapsto y_3+y_0y_2h_1(y_0,y_1)+y_0h_3(y_0,y_1),
$$
$$
y_4\mapsto y_4+2(h_1(y_0,y_1)y_2+h_3(y_0,y_1))y_3+
y_0(h_1(y_0,y_1)y_2+h_3(y_0,y_1))^2,
$$ where $\lambda_i\in\CC^*$, $\mu \in \CC$ and the $h_i$ are homogeneous of degree $i$ in $y_0,y_1$.  Thus the dimension of the orbit of $F_8^0$ is $45-13=32$.  As the vector space of octic forms is of dimension $37$, the orbit of $F_8^0$ is of codimension $5$. A transversal slice to the orbit can be given by \eqref{versal}.  This family of hypersurfaces in $\PP(1,1,2,4,7)$ represents a versal deformation of the weighted projective space $\PP(1,2,4,7)$ inside the space of octics in $\PP(1,1,2,4,7)$.  The computation of the infinitesimal deformation space $\Ext^1(\Omega^1_X,\OOO_X)$ shows that $\dim T^1_X =5$, and this implies that \eqref{versal} is a complete versal deformation of $X$.  It is interesting to note that this family contains partial smoothings of $X$ with only three singular points, of which two are of type $\frac12(1,1,1)$ (the singular points found in the proof of the case $\nu=1$ of Lemma~\ref{pro:normal forms}) and the third one is $\frac17(1,2,4)$. These three singular points are non-smoothable, and even infinitesimally rigid by the result of Schlessinger~\cite{Schl}.
\end{remark}

\section{Relation to quotients by the Weyl group for the root systems \texorpdfstring{$\boldsymbol{B_3}$ and $\boldsymbol{C_3}$}{B\_3 and C\_3}}
\label{sec:various}

As we already noticed, $G$ contains the Weyl group $W=W(B_3)=W(C_3)$ of order $48$. We are going to look at the invariants under the action of $W$. For any $g\in W$, the symplectic matrix $\gamma_g=\left(\begin{smallmatrix}a&b\\c&d\end{smallmatrix}\right)$ is block diagonal; that is, $b=c=0$, and $a=(\trans{ d})^{-1}$ is nothing but the matrix of $g^{-1}$ in the basis given by the columns of $\omega_2=\alpha C$ (we keep the notation from previous sections). It easily follows from the definitions that $g\cdot\theta_{m,k}$ is just $\theta_{dm,k}$. This implies that the space of $W$-invariant theta functions of degree $k$ consists of the functions $\pazocal R_W\theta_{m,k}$, $m$ running over $(\trans{C})^{-1}(F)\cap P_k$, where $F$ is a fundamental domain for the action of the (real) affine crystallographic group $M\rtimes W$ on the space $M_\RR=M\otimes\RR$ containing the weight lattice $M$ and $\pazocal R_W$ is the Reynolds operator of taking the average over the action of $W$. When we say that $F$ is a fundamental domain, we mean that $F$ is the disjoint union of an open convex polyhedron in $M_\RR$ with finitely many polyhedra of smaller dimensions contained in its boundary such that every $W$-orbit in $M_\RR$ has exactly one representative in $F$. We are now going to fix the choice of a particular fundamental domain $F$.

There is a tower of degree $2$ extensions
$$
Q=Q(C_3)\subset Q\dual=Q(B_3)\subset M=Q^*,
$$
the three lattices being invariant under $W$, so taking the semi-direct product with $W$, we obtain a tower of degree $2$ extensions of the corresponding real affine crystallographic groups:
$$
\tilde W=Q\rtimes W\subset \tilde W\dual=Q\dual\rtimes W\subset \tilde W^* =M\rtimes W.
$$
Both $\tilde W$ and $\tilde W\dual$ are affine Weyl groups and are generated by affine reflections. By~\cite[Section~VI.2.2]{Bou}, they have fundamental domains which are closed tetrahedra, called alcoves. For $\tilde W$, the vertices of the standard alcove $\tilde C=C_{\tilde W}$ are $f_0=0$, $f_1=(1,0,0)$, $f_2=\frac12(1,1,0)$, $f_3=\frac12(1,1,1)$. Denoting by $x_1,x_2,x_3$ the coordinates in the Euclidean space $\RR^3$, in which we place our lattices $Q$ and $M$, we obtain an alcove $\tilde C\dual=C_{\tilde W\dual}$ for $\tilde W\dual$ as one half of $\tilde C$ cut out by the mirror $x_1=\frac12$ of a reflection contained in $\tilde W\dual$ but not in $\tilde W$. Thus $\tilde C\dual$ is the tetrahedron with vertices $O=f_0$, $A=\frac12f_1$, $E=f_2$, $N=f_3$. The subgroup of $\tilde W^*$ leaving invariant the tetrahedron $OAEN$ is of order $2$; besides the identity, it contains an element $R$ of order $2$, the axial symmetry, or rotation by the angle $\pi$ with axis $KL$, where $K$ and $L$ are the middles of $AE$ and $ON$, respectively (see the figure below).

Now we can get a fundamental domain $F$ of $\tilde W^*$ as follows: first choose a plane $P$ containing $KL$, and pick up one of the two halves in which $P$ dissects $\tilde C\dual$, say $C_1$. The intersection $D=P\cap\tilde C\dual$ is a face of $C_1$, and it is dissected by $KL$ in two halves $D_1$ and $D_2$ symmetric to each other by the action of $R$. Then we obtain the following set of representatives
of the orbits of $R$ acting on $\tilde C\dual$: the polyhedron $C_1$, in which we include all of its faces, except for the face $D$, from which only the closed part $D_1$ is included. The representatives of the orbits of $R$ in $\tilde C\dual$ are at the same time the representatives of the orbits of $\tilde W^*$ in $V^*_\RR$, so the constructible set we have described is a fundamental domain for $\tilde W^*$.


\[
\includegraphics{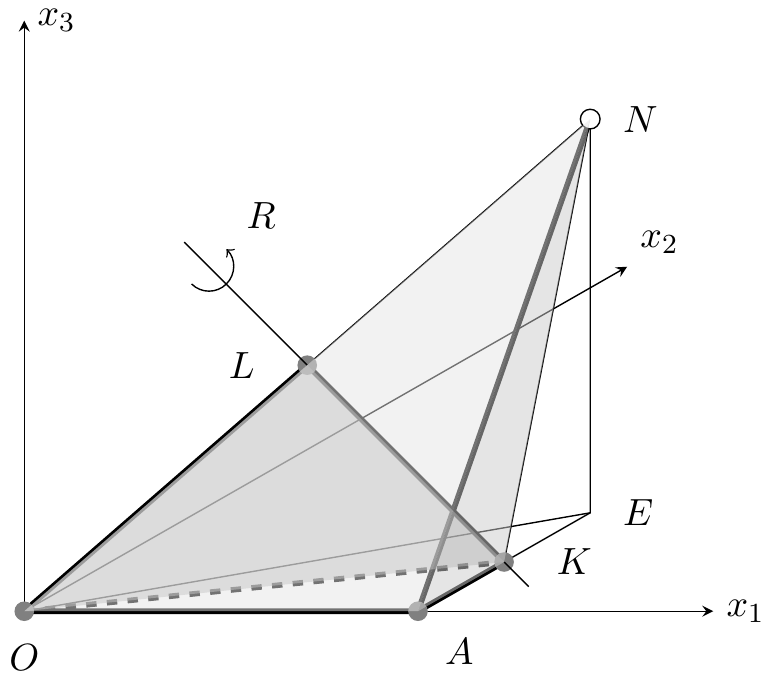}
\]

To obtain a particular fundamental domain $F$, we choose in the above construction the plane $OKN$ for $P$,  the tetrahedron $OAKN$ for $C_1$, then the triangle $KON$ for the face $D$, and we keep only one half of it in~$F$, the triangle $D_1=OKL$. On the picture, $F$ is shaded; the triangle $OKL$ is shaded in a darker gray color as it is included in $F$ and the remaining part of the face $OKN$ is not included. The sides of the three closed triangles of the boundary of $F$ contained in $F$, as well as the edge  $AN$ contained in $F$, are drawn as thick lines. Only the part $OL$ of the edge $ON$ drawn as a thick line is contained in $F$, and the edge  $KN$, not contained in $F$, is drawn as a thin line. The black points $O$, $A$, $K$, $L$ are contained in $F$, while the vertex $N$ is not in $F$ and is represented by a white point.

Looijenga in~\cite{Loo} provides a recipe for the choice of generators of the algebra of $W$-invariant theta functions for the lattice of the form $\tau Q+Q$; his generators are the averages of the theta functions $\theta_{f_i,k_i}$ with characteristics $f_i$, the vertices of the alcove, the degrees $k_i$ being the smallest positive integers such that $k_if_i\in M$. 

\begin{remark}\label{conj-gen}
In the previous sections, we worked with theta functions $\theta_{m,k}$ defined in Section~\ref{sec:prelim} for a different lattice, $\tau M+Q$, so Looijenga's recipe does not apply to $\theta_{m,k}$, but we can use it as a heuristic principle for the choice of theta functions that would be candidates for generators of the $G$-invariant subalgebra. We can try the $G$-averages of the theta functions whose characteristics are the vertices $\mu_i$ of a ``pseudo-fundamental domain'' $F_G\subset F$ for $G$, or a ``pseudo-alcove'' which is just a smaller tetrahedron with vertex $O$ contained in~$F$, having the correct lattice volume with respect to $M$, equal to $\frac1{336}$, and the expected weights $k_i$.
Recall that $b_1=f_1,b_2=2f_2,b_3=f_3$ are the columns of the matrix $(\trans{C})^{-1}$; that is, they form a basis of $M$ dual to the standard basis of the root system $C_3$. This is the basis in which we write the characteristics of our theta functions. From this we see, in particular, that the lattice volume of $F$ with respect to $M$ is $\frac1{48}$. We can pick up the following \textit{ad hoc} candidate for the ``pseudo-alcove'': take the tetrahedron $F_G$ with vertices
\begin{equation}\label{vertices}
\mu_0=0,\quad 
\mu_1=\tfrac12b_1, \quad 
\mu_2=\tfrac14(b_1+b_2), \quad 
\mu_3=\tfrac17(2b_1+b_2+b_3).
\end{equation}
The tetrahedron $F_G$ obviously has $M$-volume $\frac1{336}$,
and the smallest $k_i$ such that $k_i\mu_i\in M$ coincide with the weights of the expected weighted projective space: $(k_0,k_1,k_2,k_3)=(1,2,4,7)$. We thus may expect that the algebra $S(\LLL^2)^G$ is generated by  the $G$-averages  of the even-degree theta functions that are obtained as products of theta functions whose  characteristics are the vertices of $F_G$: 
 $\theta_{\mu_0,1}^2$, $\theta_{\mu_1,2}$, $\theta_{\mu_2,4}$,
$\theta_{\mu_0,1}\theta_{\mu_3,7}$, $\theta_{\mu_3,7}^2$. To present four algebraically independent $G$-invariant theta functions of degrees $2,2,4,8$ in Lemma~\ref{Jac-non-0}, we chose another set, for which the computations are slightly easier: $\theta_{\mu_0,2}$, $\theta_{\mu_1,2}$, $\theta_{\mu_2,4}$, $\theta_{\frac78\mu_3,8}$.
\end{remark}

We now return to the quotient $\JJJ/W$. As we saw above, the theta functions
$\{\pazocal R_W\theta_{m,k}\}_{m\in (\trans{C})^{-1}(F)\cap P_k}$ form a basis of
the $W$-invariant theta functions of degree $k$ on $\JJJ$. The fact that $\JJJ/W$ is not a complex crystallographic reflection quotient manifests itself in that $F$ is not a closed simplex but a constructible set, which we can describe as follows:
\begin{equation}\label{FW}
F=OAKN\cup\bar{OAK}\cup\bar{OKL}\cup\bar{OAL}\cup ANK\cup ANL\cup AN.
\end{equation}
Here we denote by $\bar{A_1\ldots A_n}$ the convex hull of points $A_1, \ldots, A_n$ in a real affine space, and by $A_1\ldots A_n$ its relative interior. From this we can deduce the Hilbert function of the invariant algebra $S(\LLL)^W$. 

\begin{prop}\label{hTW}
The Hilbert function $h_{\bar S}\colon k\mapsto \dim H^0(\JJJ,\LLL^k)^W$ of the algebra $\bar S=S(\LLL)^W$
is the sum of a polynomial $E$ and a function $\ell\colon k\mapsto d_0k+d_1$, where $d_0,d_1$ are $4$-periodic functions of the integer variable $k$,
$$
E(k)= \tfrac{1}{48}k^3+ \tfrac{3}{16}k^2 +\tfrac{2}{3}k+ {1}, \quad d_0=
\left\{\begin{array}{l}
0\ \mathrm{if}\  k\equiv 0\ \mathrm{or}\ 2\\
-\tfrac{3}{16}\ \mathrm{if}\  k\equiv \pm1
\end{array}\right., \quad 
d_1=\left\{\begin{array}{l}
0\ \mathrm{if}\  k\equiv 0\\
-\tfrac{11}{16}\ \mathrm{if}\  k\equiv \pm1\\
-\tfrac14\ \mathrm{if}\  k\equiv 2
\end{array}\right.
\ \ (\mod 4).
$$

The Hilbert series $\HS_{\bar S}(t)=\sum_{k=0}^\infty h_{\bar S}(k)t^k$ of\, $\bar S$ is given by
\begin{equation}\label{hSer}
\HS_{\bar S}(t)=\frac{1-t+t^2}{(1-t)^2(1-t^2)(1-t^4)}.
\end{equation}
\end{prop}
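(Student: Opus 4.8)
The plan is to read $h_{\bar S}(k)$ off as a lattice-point count inside the fundamental domain $F$ and then evaluate that count by Ehrhart theory. As recorded just before the statement, $W$ acts on the basis $\{\theta_{m,k}\}_{m\in P_k}$ of $H^0(\JJJ,\LLL^k)$ by the permutation $g\cdot\theta_{m,k}=\theta_{dm,k}$, so $\dim H^0(\JJJ,\LLL^k)^W$ is the number of $W$-orbits on $\tfrac1k\ZZ^3/\ZZ^3$. Writing characteristics in the basis $b_1,b_2,b_3$ of $M$ (the columns of $(\trans C)^{-1}$) identifies this with the number of $(M\rtimes W)$-orbits on $\tfrac1kM$, and since $F$ is a fundamental domain for $M\rtimes W$ we obtain
\[
h_{\bar S}(k)=\#\bigl(F\cap\tfrac1kM\bigr).
\]

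Next I would turn the description \eqref{FW} of $F$ into a signed sum of Ehrhart quasi-polynomials of honest rational polytopes. In the basis $(b_i)$ the relevant points are $O=0$, $A=\tfrac12 b_1$, $K=\tfrac14(b_1+b_2)$, $L=\tfrac12 b_3$, $N=b_3$, and \eqref{FW} amounts, after a face-by-face comparison, to the set identity
\[
F=\bar{OAKN}\setminus\bigl(\bar{NKL}\setminus\bar{KL}\bigr)
\]
in $M_\RR$. Hence, writing $L_P(k):=\#\bigl(P\cap\tfrac1kM\bigr)$ for the Ehrhart quasi-polynomial of a rational polytope $P$,
\[
h_{\bar S}(k)=L_{\bar{OAKN}}(k)-L_{\bar{NKL}}(k)+L_{\bar{KL}}(k).
\]
All the vertices above have coordinates with denominators dividing $4$, so each $L_P$ is a quasi-polynomial of period dividing $4$ and degree $\dim P$; in particular $h_{\bar S}$ is a quasi-polynomial of degree $3$ and period dividing $4$, with leading coefficient $\Vol(\bar{OAKN})=\tfrac1{48}$, which is already the cubic term of $E$.

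It remains to compute the three quasi-polynomials and add them. For the segment $\bar{KL}$ this is the elementary count of lattice points on a segment, depending only on $k\bmod 4$; for the triangle $\bar{NKL}$ a direct count of lattice points in a rational triangle suffices. The only slightly involved piece is $\bar{OAKN}$: applying the unimodular change of coordinates of $M\simeq\ZZ^3$ that sends $K\mapsto\tfrac14 b_2$ and fixes $A$ and $N$, one identifies $k\,\bar{OAKN}$ with the corner simplex $\{(x,y,z)\in\RR_{\geq0}^3:2x+4y+z\leq k\}$, so that $L_{\bar{OAKN}}(k)=\#\{(x,y,z)\in\ZZ_{\geq0}^3:2x+4y+z\leq k\}$, which is evaluated by iterated summation of finite arithmetic progressions. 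Collecting the three contributions produces the stated formula $h_{\bar S}(k)=E(k)+\ell(k)$. For the generating function, summing $\sum_{k\geq0}(E(k)+\ell(k))t^k$ — splitting $\ell$ according to the residue of $k$ modulo $4$ and using the standard evaluations of $\sum_{k\equiv r\,(4)}t^k$ and $\sum_{k\equiv r\,(4)}kt^k$ — collapses, over a common denominator, to $\HSer_{\bar S}(t)=\dfrac{1-t+t^2}{(1-t)^2(1-t^2)(1-t^4)}$; alternatively, since $h_{\bar S}$ is already known to be a degree-$3$ quasi-polynomial of period dividing $4$, it suffices to check that this rational function has a permissible denominator and reproduces the first several values $h_{\bar S}(0),h_{\bar S}(1),\dots$.

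The work is entirely elementary; the one place to be careful is the half-open combinatorics of $F$. The identity $F=\bar{OAKN}\setminus(\bar{NKL}\setminus\bar{KL})$ must be verified stratum by stratum so that no face, edge or vertex of $F$ is double-counted or omitted across the four residue classes modulo $4$ — this is precisely what pins down the $4$-periodic correction $\ell$ — and the period-$4$ Ehrhart quasi-polynomial of the tetrahedron $\bar{OAKN}$, with its non-integral vertices $A$ and $K$, is the only non-immediate computation, routine once the corner-simplex normalization is in hand.
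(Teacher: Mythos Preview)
Your argument is correct, and the overall framework matches the paper's: identify $h_{\bar S}(k)$ with the Ehrhart lattice-point count $\#(kF\cap M)$, observe that this is a degree-$3$ quasi-polynomial of period dividing $4$ with leading coefficient $\tfrac1{48}$, and then pin it down. The difference lies in how the pinning-down is carried out. The paper writes $h_F$ as a signed sum of eleven Ehrhart functions of open and closed simplices of various dimensions (extracted term by term from~\eqref{FW}), but then does \emph{not} evaluate those terms individually; instead it simply computes the thirteen values $h_F(0),\dots,h_F(12)$ with Macaulay2 and solves for the periodic coefficients. Your route is more explicit and avoids the computer: the identity $F=\bar{OAKN}\setminus(\bar{NKL}\setminus\bar{KL})$ (which I checked does follow from~\eqref{FW}, once one notes that the only boundary pieces of $\bar{OAKN}$ missing from $F$ are exactly the open triangle $NKL$, the open edges $NK$ and $NL$, and the vertex $N$) reduces everything to three closed rational polytopes, and your unimodular normalization of $\bar{OAKN}$ to the corner simplex $\{2x+4y+z\le k\}$ makes that piece a routine iterated sum. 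So both proofs share the Ehrhart step but diverge afterwards: the paper trades an elementary computation for a short computer check, while you trade a little more bookkeeping for a fully by-hand argument and a tidier three-term decomposition.
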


\begin{proof}
By Ehrhart's theorem, the number of integer points $h_P(k)$ in the integer multiples $kP$ of an open or closed polytope $P$ in $\RR^n$ with rational vertices is a quasi-polynomial in the integer variable $k$ of degree equal to the dimension of $P$. A quasi-polynomial is a polynomial function whose coefficients are periodic. The common period of the coefficients of the Ehrhart quasi-polynomial of $P$ is the smallest positive integer $d$ such that $dP$ is a lattice polytope; that is, all the coordinates of its vertices are integers (see, for example, \cite[Theorem 4.6.25]{Sta}). By the above, $h_{\bar S}$ coincides with the function $h_F$ counting the number of points of $M$ in the multiples $kF$ of $F$.
By \eqref{FW}, $h_F(k)$ can be represented as a linear combination, with coefficients $\pm 1$, of the numbers $h_\sigma(k)$ for $\sigma$ running through a finite set of open or closed simplices with rational vertices of dimensions between $0$ and $3$:
$$
h_F=h_{OAKN}+h_{\bar{OAK}}+h_{\bar{OKL}}+h_{\bar{OAL}}+h_{ALN}+h_{ANK}
-h_{\bar{OA}}-h_{\bar{OK}}-h_{\bar{OL}}+h_{AN}+h_O.
$$
Each of the terms of this linear combination is a quasi-polynomial of degree at most $3$ with period $d=4$, the least common denominator of the coordinates of the vertices of the simplices $\sigma_i$, so $h_F$ also is such a quasi-polynomial. Moreover, the dominant coefficient of $h_F$ is constant and is equal to the inverse of the lattice volume of $F$, that is, $\frac1{48}$, so it suffices to compute 12 consecutive values of $h_F$ in order to determine the coefficients as solutions to a system of linear equations. Here is the result of the computation of $h_{\bar S}(k)=h_F(k)$ for $0\leq k \leq 12$ by Macaulay2~\cite{M2}:
$$1,\ 1,\ 3,\ 4,\ 8,\ 10,\ 16,\ 20,\ 29,\ 35,\ 47,\ 56,\ 72,\ldots .$$
These values  completely determine $h_{\bar S}$, and we thus obtain the formulas from the statement of the proposition.
\end{proof}

\begin{prop}\label{fake}
Consider the quotient $X'=\PP^3/\mU_2\times\mU_4$ of the projective space $\PP^3$ by a group of order $8$, where $\mU_n$ denotes a cyclic group of order $n$ and the generators of $\mU_2$, $\mU_4$ act by diagonal matrices $\diag (1,1,-1,1)$, $\diag(i,1,1,-1)$, respectively. Then $X'$ and $\JJJ/W$ have the same Hilbert functions.
\end{prop}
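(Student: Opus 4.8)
The plan is to compute the Hilbert function of $X'$ by Molien's formula and to check that it coincides term by term with the series \eqref{hSer} obtained for $\bar S=S(\LLL)^W$ in Proposition~\ref{hTW}. Write $G'=\mU_2\times\mU_4$ for the order-$8$ abelian group acting on $\CC^4$ with coordinates $x_0,x_1,x_2,x_3$ through the diagonal matrices $\sigma=\diag(1,1,-1,1)$ and $\rho=\diag(i,1,1,-1)$. Since $\sigma$ and $\rho$ commute, generate a subgroup of $\mathbf{GL}_4$ of order $8$ isomorphic to $\mU_2\times\mU_4$, and meet the group of scalar matrices only in the identity, $G'$ embeds in $\mathbf{PGL}_4$, acts faithfully on $\PP^3$, and $X'=\PP^3/G'=\Proj R$ with homogeneous coordinate ring $R=\CC[x_0,x_1,x_2,x_3]^{G'}$. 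Hence the Hilbert function of $X'$ is $k\mapsto\dim R_k=\dim\bigl(\CC[x_0,\ldots,x_3]_k\bigr)^{G'}$ --- exactly as $h_{\bar S}(k)=\dim H^0(\JJJ,\LLL^k)^W$ is the Hilbert function of $\JJJ/W=\Proj\bar S$ --- and it suffices to prove the identity of generating series
\[
\sum_{k\geq 0}(\dim R_k)\,t^k=\frac{1-t+t^2}{(1-t)^2(1-t^2)(1-t^4)}=\HS_{\bar S}(t).
\]

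The computation itself is a direct application of Molien's formula, trivialized by the fact that $G'$ is abelian and acts diagonally. Every element of $G'$ equals $g=\sigma^a\rho^b=\diag\bigl(i^{\,b},1,(-1)^a,(-1)^b\bigr)$ for $a\in\{0,1\}$, $b\in\{0,1,2,3\}$, so
\[
\sum_{k\geq 0}(\dim R_k)\,t^k=\frac1{8}\sum_{a=0}^{1}\sum_{b=0}^{3}\frac1{(1-i^{\,b}t)(1-t)\bigl(1-(-1)^at\bigr)\bigl(1-(-1)^bt\bigr)}.
\]
One performs the sum over $a$ first, producing the $b$-independent factor $\tfrac1{1-t}+\tfrac1{1+t}=\tfrac2{1-t^2}$; the remaining sum over $b$ has terms with denominators $(1-t)^2$, $(1-it)(1+t)$, $1-t^2$, $(1+it)(1+t)$, and combining the $b=1$ and $b=3$ contributions via $\tfrac1{1-it}+\tfrac1{1+it}=\tfrac2{1+t^2}$ and clearing over the common denominator $(1-t)^2(1+t)(1+t^2)=(1-t)(1-t^4)$ gives the numerator $(1+t)(1+t^2)+(1-t)(1+t^2)+2(1-t)^2=4(1-t+t^2)$. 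Reassembling the factors,
\[
\sum_{k\geq 0}(\dim R_k)\,t^k=\frac1{8}\cdot\frac2{1-t^2}\cdot\frac1{1-t}\cdot\frac{4(1-t+t^2)}{(1-t)(1-t^4)}=\frac{1-t+t^2}{(1-t)^2(1-t^2)(1-t^4)},
\]
which is precisely $\HS_{\bar S}(t)$ from \eqref{hSer}; therefore $X'$ and $\JJJ/W$ have the same Hilbert function.

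There is no real obstacle: once $X'$ is singled out as the candidate (which is given in the statement), the proof is essentially a one-line Molien computation, and the only points requiring a little attention are the elementary bookkeeping in collapsing the eight-term sum to closed form and fixing the convention for ``Hilbert function of $X'$'' recalled above. If one prefers to sidestep Molien's formula, one may instead note that by Molien's and Ehrhart's theorems both Hilbert functions are quasi-polynomials of degree $3$ whose coefficients are periodic with period dividing $4$, so that agreement on the first twelve values --- the values of $h_{\bar S}$ already being tabulated in the proof of Proposition~\ref{hTW} --- forces agreement for all $k\geq 0$; this is the computational style used there, but the Molien computation above makes the coincidence structurally transparent and clarifies why precisely $X'=\PP^3/(\mU_2\times\mU_4)$ is the right model.
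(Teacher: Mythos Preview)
Your proof is correct and the Molien computation is clean; the identification $X'=\Proj\CC[x_0,\ldots,x_3]^{G'}$ is justified since $G'$ meets the scalars trivially, so $\dim R_k$ is indeed the natural Hilbert function of $X'$, and your algebraic reduction of the eight-term sum to $\dfrac{1-t+t^2}{(1-t)^2(1-t^2)(1-t^4)}$ checks out.

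The paper takes a different route: rather than Molien, it realizes $X'$ as a toric variety $\mathbf X_{\Sigma,\mathsf N}$ by passing from the lattice $\mathsf N_0=\ZZ^3$ of $\PP^3$ to the overlattice $\mathsf N=(\mathsf M_0^{G'})^*$, writes down the primitive ray generators $v_0,\ldots,v_3$ and the associated torus-invariant divisors $D_i$, and then states (omitting the routine verification) that $k\mapsto h^0(X',\OOO_{X'}(kD_i))$ equals $h_{\bar S}$ for the two divisors of Cartier index~$4$. Your approach is more elementary and fully self-contained for the bare equality of Hilbert functions; the paper's toric setup costs more but buys structural information: it exhibits $X'$ explicitly as a \emph{weak weighted projective space} (a simplicial toric variety with exactly four rays whose primitive generators do not span $\mathsf N$), which is precisely the notion needed to formulate Conjecture~\ref{WWPS} immediately afterwards. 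In fact the two computations match on the nose, since for the index-$4$ divisor $D_0$ one has $\pi^*D_0\sim\OOO_{\PP^3}(1)$ under the quotient map $\pi\colon\PP^3\to X'$, so $h^0(X',\OOO_{X'}(kD_0))=\dim\CC[x_0,\ldots,x_3]_k^{G'}$ is exactly the quantity you compute.
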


\begin{proof}
We represent $X'$ as a toric variety, the equivariant compactification $\mathbf X_{\Sigma, \mathsf{N}}$ of the $3$-dimensional algebraic torus $\TT:=(\CC^*)^3$, defined by a fan $\Sigma$ in the $3$-dimensional $\RR$-vector space $\mathsf{N}\otimes \RR$, where $\mathsf{N}\simeq\ZZ^3$ is the lattice of $1$-parametric subgroups in $\TT$ 
(see \textit{e.g.}~\cite{Fu} for definitions and basic properties of toric varieties). Let $y_0,\ldots,y_3$ be the homogeneous coordinates of $\PP^3$ and $\TT_0=(\CC^*)^3$ the standard torus in the affine chart $\CC^3$ of $\PP^3$ with affine coordinates $u_1=y_1/y_0,\ u_2=y_2/y_0,\ u_3=y_3/y_0$. The simplicial fan $\Sigma$ defining $\PP^3$ is determined by its $1$-dimensional cones, or rays, which are spanned by
the four vectors $(1,0,0),\ (0,1,0),\ (0,0,1), (-1,-1,-1)$ of the lattice $\mathsf{N}_0=\ZZ^3$. The dual to $\mathsf{N}_0$ is the lattice $\mathsf{M}_0=\ZZ^3$ of exponents of monomials in the coordinate algebra $\CC[\mathsf{M}_0]=\CC[u_1,u_2,u_3]$ of the affine chart of $\PP^3$ that we have chosen. To get the quotient by $\mU_2\times\mU_4$, we replace $\mathsf{M}_0$ with the sublattice $\mathsf{M}=\mathsf{M}_0^{\mU_2\times\mU_4}$ of exponents of  $(\mU_2\times\mU_4)$-invariant monomials, which is given by 
$$
\mathsf{M}=\{(m_1,m_2,m_3)\in \mathsf{M}_0\colon m_3\equiv 0\mod 2, \ -m_1 - m_2 +m_3\equiv 0\mod 4\};
$$
this is the lattice of exponents of the monomials $\mathbf u^{\mathbf m}=u_1^{m_1}u_2^{m_2}u_3^{m_3}$
which form a basis of the regular functions on the quotient torus $\TT:=\TT_0/\mU_2\times\mU_4$.
The dual $\mathsf{N}=\mathsf{M}^*=\left(\mathsf{M}_0^{\mU_2\times\mU_4}\right)^*$ is the overlattice of $\mathsf{N}_0=\ZZ^3$ generated by the vectors $(1,0,0),
\frac12(0,1,0), \frac14(-1,-1,1)$, and $X'=\mathbf X_{\Sigma, \mathsf{N}}$ is the equivariant compactification of $\TT$ defined by the same fan $\Sigma$ as the original $\PP^3$, but taken with respect to the lattice $\mathsf{N}$. 
The primitive vectors of $\mathsf{N}$ generating the rays of the fan are $v_0=(1,0,0)$,
$v_1=\frac12(0,1,0)$, $v_2=-\frac12(1,1,1)$, $v_3=(0,0,1)$, and we get four $\TT$-invariant divisors $D_i$ in $X'$, defined by
$D_i=D_{v_i}$, where $D_{v_i}$ denotes the closure in $X'$ of the kernel of $v_i$ viewed as a $1$-parametric subgroup of $\TT$. The Cartier indices of these divisors for $i=0,1,2,3$ are, respectively, $4,2,2,4$. We omit the routine verification of the following assertion, which makes more precise the statement we are proving.
\end{proof}

\addtocounter{theorem}{-1}
\renewcommand{\thetheorem}{\arabic{section}.\arabic{theorem}$'$}

\begin{prop}\label{D03}
In the above notation, the Hilbert functions $k\mapsto h^0(X',\OOO_{X'}(kD_i))$ coincide with $h_{\bar S}$ for both divisors $D_i$ of Cartier index $4$, that is, for $i=0$ and $3$.
\end{prop}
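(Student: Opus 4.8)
The plan is to translate both sides into lattice-point counts and then invoke the rigidity of Ehrhart quasi-polynomials. First I would use the standard description of sections of a torus-invariant Weil divisor on the toric variety $X'=\mathbf X_{\Sigma,\mathsf N}$: writing $D_i=\sum_j a_j D_{v_j}$ with $a_j=\delta_{ij}$, the characters $\chi^m$ with $m\in\mathsf M$ lying in
\[
kP_{D_i},\qquad P_{D_i}=\bigl\{m\in\mathsf M\otimes\RR:\ \langle m,v_j\rangle\ge -a_j\ \text{for }j=0,1,2,3\bigr\},
\]
form a $\CC$-basis of $H^0(X',\OOO_{X'}(kD_i))$, so that $h^0(X',\OOO_{X'}(kD_i))=\#(\mathsf M\cap kP_{D_i})$.

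Next I would work out $P_{D_0}$ and $P_{D_3}$ from the primitive ray vectors $v_0=(1,0,0)$, $v_1=\tfrac12(0,1,0)$, $v_2=-\tfrac12(1,1,1)$, $v_3=(0,0,1)$. Both turn out to be genuine $3$-simplices in $\RR^3$: $P_{D_0}$ is cut out by $m_1\ge -1$, $m_2\ge 0$, $m_3\ge 0$, $m_1+m_2+m_3\le 0$ (vertices $(0,0,0)$, $(-1,0,0)$, $(-1,1,0)$, $(-1,0,1)$), and $P_{D_3}$ by $m_1\ge 0$, $m_2\ge 0$, $m_3\ge -1$, $m_1+m_2+m_3\le 0$ (vertices $(0,0,0)$, $(0,0,-1)$, $(1,0,-1)$, $(0,1,-1)$). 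By Ehrhart's theorem, taken relative to the index-$8$ lattice $\mathsf M\subset\ZZ^3$, each of $k\mapsto\#(\mathsf M\cap kP_{D_i})$ for $i=0,3$ is a quasi-polynomial of degree $3$; its period equals the least $d>0$ with all vertices of $dP_{D_i}$ in $\mathsf M$, which one checks to be $4$ (in accordance with the Cartier index $4$ of $D_0$ and $D_3$), and its leading coefficient is the $\mathsf M$-normalized volume of $P_{D_i}$, namely $\tfrac16/8=\tfrac1{48}$. By Proposition~\ref{hTW}, $h_{\bar S}$ is likewise a degree-$3$ quasi-polynomial of period $4$ with leading coefficient $\tfrac1{48}$. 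Hence the desired identity for a fixed $i\in\{0,3\}$ is forced by its validity on $12$ consecutive values of $k$, and this is checked by direct computation (e.g.\ with Macaulay2, as in the proof of Proposition~\ref{hTW}): for both $i=0$ and $i=3$ the counts begin $1,1,3,4,8,10,16,20,29,35,47,56,72,\dots$, in agreement with $h_{\bar S}$.

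I expect the only real obstacle to be careful bookkeeping — this is the ``routine verification'' the statement alludes to. One must fix the lattice $\mathsf M$ and the primitive rays $v_j$ exactly, filter the $\ZZ^3$-points of $kP_{D_i}$ through the congruences defining $\mathsf M$, and confirm that the Ehrhart period is genuinely $4$ and not a proper multiple; the cases $i=0$ and $i=3$ must be run separately, since no coordinate change interchanges $D_0$ and $D_3$ while preserving $\mathsf M$. A more conceptual route would be to exhibit an affine-unimodular identification between $(P_{D_i},\mathsf M)$ and the half-open fundamental domain $F$ with its lattice $M$ from Section~\ref{sec:various}; but $F$ is a constructible set, not a closed simplex, so matching its discarded boundary faces against the closed simplices $P_{D_i}$ by inclusion--exclusion is exactly the labor that the finite comparison of values circumvents.
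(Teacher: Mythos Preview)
The paper explicitly omits the proof of this proposition, calling it a ``routine verification''; your proposal supplies precisely that verification, and by the same Ehrhart-quasi-polynomial mechanism used immediately before in Proposition~\ref{hTW}. So your approach is correct and is exactly what the context invites.

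Two small caveats on the bookkeeping you flag. First, Ehrhart's theorem guarantees only that the period \emph{divides} the least $d$ with $dP_{D_i}$ an $\mathsf M$-lattice polytope; your phrasing ``equals'' is not quite right, but the argument only needs ``divides $4$'', so nothing is lost. Second, and more consequential for the actual check, the paper's displayed description of $\mathsf M$ contains a typo: the $\mU_2$-invariance condition on monomials $u_1^{m_1}u_2^{m_2}u_3^{m_3}$ (with $\mU_2$ acting by $\diag(1,1,-1,1)$ on $(y_0,\dots,y_3)$ and $u_i=y_i/y_0$) is $m_2\equiv 0\bmod 2$, not $m_3\equiv 0\bmod 2$; one also sees this because the stated $\mathsf M$ fails to pair integrally with the stated generator $\tfrac12(0,1,0)$ of $\mathsf N$ (e.g.\ $(1,1,2)$ lies in the displayed $\mathsf M$ but pairs to $\tfrac12$). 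With the corrected congruence $m_2\equiv 0\bmod 2$, your lattice-point counts in $kP_{D_0}$ and $kP_{D_3}$ do produce $1,1,3,4,8,10,16,20,29,35,47,56,\dots$, and the comparison with $h_{\bar S}$ goes through. Your instinct that the only obstacle is ``careful bookkeeping'' is thus exactly right.
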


\renewcommand{\thetheorem}{\arabic{section}.\arabic{theorem}}

It is plausible that the two quotients are indeed isomorphic. We observe that $X'$ belongs to the class of projective toric varieties of dimension $n$ whose fan contains $n+1$ rays. Such varieties are called weak weighted projective spaces; 
some authors call them fake weighted projective space (see~\cite{Kas}), but we prefer the adjective weak because the class of weak weighted projective spaces contains all the weighted projective spaces. A weak weighted projective space is a weighted projective space if and only if the primitive vectors of $\mathsf{N}$ on the rays $\RR^*_+v_i$ of its fan generate the whole of $\mathsf{N}$. One easily sees that this is not the case for $X'$. This brings us to the following generalization of the Bernstein--Schwarzman conjecture.

\begin{conj}\label{WWPS}
If\, $\Gamma$ and $\Gamma_1$ are commensurable complex crystallographic groups acting on $\CC^n$ such that $\ud\Gamma_1=\ud\Gamma$, and if\, $\Gamma$ is irreducible and generated by reflections,  then the quotient $\CC^n/\Gamma_1$ is a weak weighted projective space. It is a genuine weighted projective space if and only if\, $\Gamma_1$ is also generated by reflections.
\end{conj}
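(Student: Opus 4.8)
The plan is to reduce the statement to the Bernstein--Schwarzman conjecture for the reflection group $\Gamma$ --- now a theorem in full generality by Rains~\cite{Rains} --- and then to track how the quotient changes as the translation lattice varies within its commensurability class. Since $\Gamma$ is an irreducible complex crystallographic reflection group, $\CC^n/\Gamma\cong\PP(a_0,\ldots,a_n)$ is a weighted projective space, hence the toric variety $\mathbf X_{\Sigma,\mathsf N}$ attached to the standard weighted-projective fan $\Sigma$, whose $n+1$ rays carry primitive generators $v_0,\ldots,v_n$ with $\sum_i a_i v_i=0$ spanning the lattice $\mathsf N$. Writing $L,L_1$ for the translation lattices of $\Gamma,\Gamma_1$, both are full-rank $\ud\Gamma$-invariant lattices in $\CC^n$ and, by commensurability, $L''=L\cap L_1$ has finite index in each; since $L,L_1,L''$ are $\ud\Gamma$-invariant, the groups $\Gamma''=L''\rtimes\ud\Gamma\subseteq\Gamma,\Gamma_1$ are normal of finite index, and the isogenies $\CC^n/L\leftarrow\CC^n/L''\rightarrow\CC^n/L_1$ descend to a finite correspondence
$$
\CC^n/\Gamma\;\longleftarrow\;\CC^n/\Gamma''\;\longrightarrow\;\CC^n/\Gamma_1,
$$
in which the left map is the quotient by $L/L''$ and the right map the quotient by $L_1/L''$. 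Thus $\CC^n/\Gamma_1$ is obtained from the weighted projective space $\CC^n/\Gamma$ by passing to the finite cover $\CC^n/\Gamma''$ and then dividing by a finite group of translations, every group involved acting through translations by torsion points of the underlying abelian variety. The goal is to endow $\CC^n/\Gamma_1$ with a toric structure given by the \emph{same} fan $\Sigma$ but a commensurable lattice $\mathsf N_1$, which is exactly the definition of a weak weighted projective space.

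The heart of the argument is to propagate the toric structure across this correspondence. The torus of $\PP(a_0,\ldots,a_n)=\CC^n/\Gamma$ is the locus where the $n+1$ generating $\ud\Gamma$-invariant theta functions are all nonzero, its complement being the $n+1$ boundary divisors $\{v_i=0\}$. I would show that each torsion translation $t_\mu$ occurring above, with $\mu\in L_1$ and hence normalising $\Gamma$, acts on these generators monomially, that is, through the torus $\TT$: this is the Heisenberg action of a torsion point on the sections of $\LLL^k$, and the constraint $\mu\in L_1$ should force it to rescale each generator by a root of unity. Granting this, passing to the cover $\CC^n/\Gamma''$ replaces $\mathsf N$ by a finite-index sublattice $\mathsf N''$, and dividing by $L_1/L''$ replaces $\mathsf N''$ by an overlattice $\mathsf N_1$, in both cases leaving the fan $\Sigma$ and its $n+1$ rays unchanged; hence $\CC^n/\Gamma_1=\mathbf X_{\Sigma,\mathsf N_1}$ is a weak weighted projective space. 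This is precisely the mechanism exhibited in rank $3$ by Propositions~\ref{fake} and~\ref{D03}, where $\CC^3/(\Lambda\rtimes W)$ appears as the $\PP^3$-fan taken with respect to an overlattice.

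For the dichotomy I would make the reflection condition lattice-theoretic and then toric. The linear part $\ud\Gamma$ is generated by complex reflections; for such a reflection $s$ and any $\mu\in L_1$ the affine map $x\mapsto sx+(1-s)\mu$ lies in $\Gamma_1$, has translation part in $\im(s-1)$, and therefore possesses a fixed hyperplane, so it is a reflection of $\Gamma_1$. Composing it with $s$ shows that the reflection subgroup of $\Gamma_1$ contains the translation $(1-s)\mu$, and conversely one checks that its ($\ud\Gamma$-invariant) translation lattice is exactly $L_1^{\mathrm{refl}}=\sum_{s}(1-s)L_1$, the sum over the reflections $s$ of $\ud\Gamma$; hence $\Gamma_1$ is generated by reflections if and only if $L_1^{\mathrm{refl}}=L_1$. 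Under the dictionary $\mathsf N_1\leftrightarrow L_1$ supplied by the previous step, I expect $L_1^{\mathrm{refl}}$ to correspond precisely to the sublattice of $\mathsf N_1$ spanned by the primitive ray generators $v_i$, so that $L_1^{\mathrm{refl}}=L_1$ becomes the standard toric criterion for $\mathbf X_{\Sigma,\mathsf N_1}$ to be a genuine weighted projective space.

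The step I expect to be the main obstacle is the transport of the torus in the second paragraph: a finite quotient of a toric variety by translations by torsion points of its defining abelian variety is not toric in general, and the whole argument hinges on showing that these particular translations act through the dense torus $\TT$. Establishing this in the absence of a Coxeter or real structure --- equivalently, constructing the toric structure on $\CC^n/\Gamma$ intrinsically from the Bernstein--Schwarzman isomorphism so that the Heisenberg action of $L_1/L$ is visibly diagonal --- is the crux; at present such a uniform construction is available only case by case, which is why the statement is here left as a conjecture.
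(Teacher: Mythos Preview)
The statement you are addressing is Conjecture~\ref{WWPS}, which the paper explicitly leaves open; it offers no proof, only the remark that the specific instance $\Gamma_1=\Lambda\rtimes W$ (with $\Gamma$ one of the Coxeter-type groups $(Q+\tau Q)\rtimes W$ or $(Q+\tau Q\dual)\rtimes W$) is a particular case, supported heuristically by the coincidence of Hilbert functions in Propositions~\ref{hTW} and~\ref{fake}/\ref{D03}. There is therefore no proof in the paper to compare your proposal against, and you yourself correctly conclude that your outline does not complete to one.

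Your strategy---invoke Rains' theorem for $\Gamma$, then transport the toric structure along the finite correspondence induced by the lattice change---is natural, and you have correctly isolated the essential obstruction: one must show that the translations by torsion points coming from $L_1/L''$ act on $\CC^n/\Gamma$ \emph{through the dense torus} of its weighted-projective structure, i.e.\ diagonally on a chosen set of generating invariant theta functions. This is far from automatic, since the Heisenberg action of a torsion point on $H^0(\JJJ,\LLL^k)$ is genuinely matricial and need not be diagonal on any particular basis of the $\ud\Gamma$-invariant subspace. There is also a secondary technical gap you pass over: you tacitly assume that both extensions $0\to L\to\Gamma\to\ud\Gamma\to1$ and $0\to L_1\to\Gamma_1\to\ud\Gamma\to1$ split when you write $\Gamma''=L''\rtimes\ud\Gamma$ and form the reflections $x\mapsto sx+(1-s)\mu$ in $\Gamma_1$. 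The conjecture does not assume splitness, and without it neither the existence of a common finite-index subgroup with full linear part $\ud\Gamma$ nor the identification $L_1^{\mathrm{refl}}=\sum_s(1-s)L_1$ is immediate.
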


The cases when $\Gamma$ is one of the complex crystallographic reflection groups $(Q+\tau Q)\rtimes W$ or
$(Q+\tau Q\dual)\rtimes W$
treated by Looijenga and Bernstein--Schwarzman and
$\Gamma_1=\Lambda\rtimes W$, where $\Lambda=Q+\tau M$, are particular cases of this conjecture: while 
$\CC^3/\Gamma$ is known to be a weighted projective space, we expect that $\CC^3/\Gamma_1$ is the weak weighted projective space $X'$.


\end{document}